\keywords{String diagrams, categories for relations, gs-monoidal categories, restriction categories, Markov categories, semiring-weighted monads}
\newcommand{\comment}[1]{ }
\newcommand{\cobang}{\mathrel{\rotatebox[origin=c]{180}{!}}}
\newcommand{\freccia}[3]{#2 \colon #1  \to #3}
\newcommand{\duefreccia}[3]{\xymatrix@C=0.5cm{#2 \colon #1  \ar@{=>}[r] &  #3}}
\newcommand{\comsquare}[8]{ \xymatrix@+1pc{ 
#1 \ar[r]^{#5} \ar[d]_{#6} & #2 \ar[d]^{#7} \\
#3 \ar[r]_{#8} & #4 
}}
\newcommand{\pullback}[8]{ \xymatrix@+1pc{ 
#1 \pullbackcorner \ar[r]^{#5} \ar[d]_{#6} & #2 \ar[d]^{#7} \\
#3 \ar[r]_{#8} & #4 
}}
\newcommand{\quadratocomm}[8]{ \xymatrix@+1pc{ 
#1 \ar[r]^{#5} \ar[d]_{#6} & #2 \ar[d]^{#7} \\
#3 \ar[r]_{#8} & #4 
}}
\newcommand{\comsquarelargo}[8]{ \xymatrix@+1pc{ 
#1 \ar[rr]^{#5} \ar[d]_{#6} && #2 \ar[d]^{#7} \\
#3 \ar[rr]_{#8} && #4 
}}
\newcommand{\parallelmorphisms}[4]{\xymatrix@+1pc{
#1 \ar @<+4pt>[r]^{#2} \ar @<-4pt>[r]_{#3} & #4
}}
\newcommand{\relation}[4]{\xymatrix@+1pc{
\angbr{#2}{#3}\colon #1 \ar @<+4pt>[r] \ar @<-4pt>[r] & #4
}}
\newcommand{\frecceparalleleopposte}[4]{\xymatrix@+1pc{
#1 \ar@<+4pt>[r]^{#2} \ar@<-4pt>@{<-}[r]_{#3} & #4
}}
\newcommand{\equalizer}[6]{\xymatrix@+1pc{
#1 \ar[r]^{#2} & #3 \ar @<+4pt>[r]^{#4} \ar @<-4pt>[r]_{#5} & #6
}}
\newcommand{\coequalizer}[6]{\xymatrix@+1pc{
 #1 \ar @<+4pt>[r]^{#2} \ar @<-4pt>[r]_{#3} & #4 \ar[r]^{#5} & #6
}}
\newcommand{\subobject}[3]{\xymatrix{
#1 \ar@{>->}[r]^{#2} & #3
}}
\newcommand{\pullbackcorner}[1][ul]{\save*!/#1+1.2pc/#1:(1,-1)@^{|-}\restore}
\def\mA{\mathcal{A}}
\def\mC{\mathcal{C}}
\def\mD{\mathcal{D}}
\def\Rel{\mathbf{Rel}}
\def\id{\operatorname{ id}}         
\newcommand{\angbr}[2]{\langle #1,#2 \rangle}
\tikzstyle{nodonero}=[fill=black, draw=black, shape=circle]
\tikzstyle{box}=[fill=white, draw=black, shape=rectangle]
\tikzstyle{medium box}=[fill=white, draw=black, shape=rectangle, minimum width=0.7cm, minimum height=0.7cm]
\tikzstyle{bn}=[fill=black, draw=black, shape=circle, inner sep=1.5pt]
\tikzstyle{state}=[fill=white, draw=black, regular polygon, regular polygon sides=3, minimum width=0.8cm, shape border rotate=180, inner sep=0pt]
\tikzstyle{costate}=[fill=white, draw=black, regular polygon, regular polygon sides=3, minimum width=0.8cm, inner sep=0pt]
\tikzstyle{comp}=[fill={rgb,255: red,191; green,0; blue,64}, draw={rgb,255: red,191; green,0; blue,64}, shape=circle, inner sep=1.5pt]
\tikzstyle{ds}=[-, dashed, dash pattern=on 1mm off 1mm]
\title{A taxonomy of categories for relations}
\author{Cipriano Junior Cioffo\lmcsorcid{0000-0002-4189-0930}}[a]
\author{Fabio Gadducci\lmcsorcid{0000-0003-0690-3051}}[a]
\address{Department of Computer Science, University of Pisa, Pisa, Italy}
\email{ciprianojunior.cioffo@di.unipi.it, fabio.gadducci@unipi.it}
\author{Davide Trotta \lmcsorcid{0000-0003-4509-594X}}[b]
\address{Department of Mathematics, University of Padova, Padova, Italy}
\email{trottadavide92@gmail.com}
\newenvironment{mytheorem}{\begin{thm}}{\end{thm}}
\newenvironment{mylemma}{\begin{lem}}{\end{lem}}
\newenvironment{mycorollary}{\begin{cor}}{\end{cor}}
\newenvironment{myproposition}{\begin{prop}}{\end{prop}}
\newenvironment{mydefinition}{\begin{defi}}{\end{defi}}
\newenvironment{myremark}{\begin{rem}}{\end{rem}}
\newenvironment{myexample}{\begin{exa}}{\end{exa}}
\begin{document}
\maketitle

\begin{abstract}
The study of categories that abstract the structural properties of relations has been extensively developed over the years, resulting in a rich and diverse body of work. 
This paper strives to provide a modern presentation of these ``categories for relations'', including their enriched version, further showing how they 
arise as Kleisli categories of symmetric monoidal monads. The resulting taxonomy aims at bringing clarity and 
organisation to the many related concepts and frameworks occurring in the literature.
\end{abstract}    

\section{Introduction}

Category theory, from its very beginnings, was conceived as an abstraction of the notions of set and function. This intuition is clearly expressed in the first sentence of the introduction 
of the well-known book by Mac Lane~\cite{MacLane-1998}

\begin{quote}
Category theory starts with the observation that many properties of 
mathematical systems can be unified and simplified by a presentation 
with diagrams of arrows. Each arrow $f : X \to Y$ represents a function; 
that is, a set $X$, a set $Y$, and a rule $x \mapsto f(x)$ which assigns 
to each element $x \in X$ an element $f(x) \in Y$.
\end{quote}

\noindent
Category theory largely focussed for years on the paradigm that the maps are the counterpart of total functions in the category at hand, and an 
essential use of this metaphor was made in forming the definitions. 
 
The late 1980s witnessed the emergence of a new perspective, aimed at exploring the relational aspects of algebra and logic.
During this period, three fundamental works appeared, shaping the development of categories aimed at abstracting the properties of relations: the papers on \emph{cartesian bicategories} by Carboni and Walters~\cite{Carboni_87} and on \emph{p-categories} by Robinson and Rosolini~\cite{Robinson88},  and the book on \emph{allegories} by Freyd and Scedrov~\cite{freyd1990categories}.

The relevance of such categories has increased in the last years, 
leading several authors to develop this kind of structures.
Such a process has brought to light new categories,
which are very similar in nature but originated in different contexts 
and have different notations.

Relevant examples include \emph{restriction categories}~\cite{Cockett02}, 
which provide an abstract setting for partiality, and \emph{copy/discard categories} (shortly, CD categories)~\cite{cho_jacobs_2019} and its \emph{affine} variant, which is the basis for a recent approach to probability, 
where they are dubbed  \emph{Markov categories}~\cite{Fritz_2020} based on the interpretation of arrows as generalised Markov kernels.

Independently, the notion of \emph{garbage/share monoidal categories} (shortly, gs-monoidal categories) was introduced to present 
a characterisation of \textit{term graphs}~\cite{gadducci1996}, as well as its 2-categorical counterpart suitable to describe term graph \textit{rewriting}~\cite{CorradiniGadducci97}.
Their study was pursued in a series of papers (see e.g. \cite{CorradiniGadducci99,CorradiniGadducci99b} among others), including their application to the functorial semantics of relational and partial algebras~\cite{CorradiniGadducci02,FritzGCT23}.

Another fundamental concept in category theory, which has deep connections with those mentioned above, is the notion of a monad. In particular, the Kleisli category associated with a monad provides a framework for understanding generalised morphisms between objects, capturing both the structure of mappings and the effects described by the monad. In computer science, Kleisli categories have been widely used to model \textit{computations} and \textit{effects}. This connection was first formalised in \cite{Moggi91}, where it was shown how monads provide a rigorous mathematical foundation for these notions.

The purpose of our work is twofold. The first is to arrange and revisit these categorical structures, as well as their 2-categorical versions in the form of preorder-enriched categories, appropriately comparing them to provide a single reference where they can be analysed.
In fact, the large number of similar notions, presented under different names and in different contexts, makes increasingly challenging to navigate the literature and to have a clear framework connecting all these kinds of categories.
The second purpose is to conduct a study on the Kleisli categories of suitable monads on such structures, with the aim of further investigating
 the well-known fact that the Kleisli category of a symmetric monoidal monad on a cartesian category is symmetric monoidal, see \cite{kock71bis}. 
This analysis is extended to the enriched context and this part is, to the best of our knowledge, original to our work.

As a result, we provide an overview of the main categories that abstract the properties of relations, and we show how they are related to each other. To present the various notions, we will use the language of string diagrams~\cite{Selinger2011}, which is widely adopted today in both mathematics and computer science. We believe the exposition can be helpful both to readers with experience in the field, who may appreciate having the relevant material gathered in one place, and to newcomers, for whom we aim to offer a reasonably self-contained introduction.

The key notions on which our taxonomy is built are those of \textit{categories with garbage} and of \textit{share categories}, as well as their dual and enriched versions, which we identify as the core of all the other notions. Our presentation aims to be as modular as possible, to highlight the key differences between the various notions, so that the reader can easily navigate the paper and focus on the specific aspects of interest.
This approach will bring us to alternative characterisations of some known categories, such as restriction and Markov categories. 

Finally, in this work we provide a taxonomy for Kleisli categories in the context of gs-monoidal categories, and we present new examples. Following the philosophy and motivations behind this work, the aim is to showcase, recall, and present the notions of affine and relevant monads  \cite{Kock71,Jacobs1994} (and their enriched version), as well as the characterisations of their Kleisli categories, in the simplest, most general, and modular way possible. 

Examples of these monads arise in a natural way, e.g. with action monads. 
In this context, the affine or relevant monoidal structure of the monad is determined by the condition that the base category is connected or special, respectively. 
Similar examples are obtained by taking instances of the semiring monad, providing also case studies for the enriched case.

It is worth observing that there are other approaches to presenting taxonomies for categories of relations, which do not take suitable monoidal categories as the central organising principle, adopting for instance a perspective based on double categories (see e.g.\ \cite{HoshinoN25}).
Moreover, several methodologies for analysing and decomposing algebraic structures are available in the literature. Distributive laws and factorisation systems, in particular, provide well-established techniques for decomposing monoids, bialgebras, and related structures, and they play a central role in the standard theory of composing monads (see \cite{rosebrugh2002distributive,lack2004composing,cheng2011iterated}). These tools provide a modular perspective that complements the gs-monoidal and monadic taxonomies developed in this work. While we briefly discuss the epi–mono factorisation in the context of regular categories, a systematic comparison with these decomposition techniques falls outside the scope of this paper. A strength of the gs-monoidal approach, when compared with the alternatives mentioned above, lies in the simplicity of the underlying notions and in the axiomatic style of the presentation: richer structures are obtained by adding small and transparent sets of axioms. In addition, the use of string diagrams -- which  gained considerable prominence in contemporary categorical practice -- provides an intuitive yet rigorous language that facilitates the formulation and the comparison of the various relational notions.

The paper is structured as follows. In Section~\ref{sec:on gs} we provide a background on the notions of interest, notably gs-monoidal categories, 
focussing on their relationships with Markov and restriction categories.  In Section~\ref{sec:taxonomy kleisli} we characterise the structure of Kleisli categories
for symmetric monoidal monads.
In Section~\ref{sec:oplax cartesian categories} we study oplax cartesian categories, the order-enriched version of gs-monoidal categories, and their relationship
with cartesian bicategories. Finally, in Section~\ref{sec:conclusions} we draw some conclusions and outline future research directions.

The paper presents in details only the proof of the results which we believe to be original, such as those concerning the enriched structures, while providing references for the others.
An appendix recalls some standard categorical notions and collects some additional definitions.

\section{A taxonomy of gs-monoidal categories}

\label{sec:on gs}
The overall, one-dimensional taxonomy of the various ``categories for relations'' we are going to analyse here is represented by the diagram below, where the presence of an arrow 
$X \to Y$ means that a category falling in the class $X$ (e.g. Markov categories) also belongs to the class $Y$ (categories that are 
either gs-monoidal or with projections)

\begin{center}

\small
\begin{tikzcd}
	&& {\text{Symmetric Monoidal}} \\
	& {\text{Garbage}} && {\text{Share}} \\
	{\text{Projections}} && {\text{GS-monoidal}} && {\hspace{-.5cm}\text{Diagonals}} \\
	& {\text{Markov}} && {\hspace{-.5cm}\text{Cartesian restriction}} \\
	&& {\text{Cartesian monoidal}}
	\arrow[from=2-2, to=1-3]
	\arrow[from=2-4, to=1-3]
	\arrow[from=3-1, to=2-2]
	\arrow[from=3-3, to=2-2]
	\arrow[from=3-3, to=2-4]
	\arrow[from=3-5, to=2-4]
	\arrow[from=4-2, to=3-1]
	\arrow[from=4-2, to=3-3]
	\arrow[from=4-4, to=3-3]
	\arrow[from=4-4, to=3-5]
	\arrow[from=5-3, to=4-2]
	\arrow[from=5-3, to=4-4]
\end{tikzcd}
\end{center}

\bigskip
Monoidal categories were introduced by B\'enabou \cite{benabou_1963} and later finitely axiomatised by Mac Lane in \cite{maclane_1963}, with the term ``monoidal category'' first appearing in a 1966 paper by Eilenberg and Kelly \cite{closed_categories_1966}. Nowadays, the literature on monoidal categories is large. Hence, for a standard presentation we refer to \cite{MacLane-1998,HCA2}.

A monoidal category is a category equipped with a tensor product operation and a unit object, satisfying certain coherence conditions. It provides a framework for studying structures where objects can be combined and interactions are modelled algebraically.
Cartesian categories are one of the leading examples of monoidal categories, where the tensor product is the categorical product.

As shown by Fox in \cite{Fox:CACC},  cartesian categories (with a choice of products) are precisely symmetric monoidal categories in which every object is equipped with a commutative 
comonoid structure given by $\nabla_X: X\to X\times X$ and  $!_X:X\to I$, both natural in $X$. 
We will refer to these structures as  \textit{cartesian monoidal categories}.

By relaxing the requirement about the existence and the naturality of these two families of arrows, one obtains a series of well-known categories.

Requiring the existence of both families but not their naturality, one obtains the notion of gs-monoidal category, introduced by Corradini and Gadducci in \cite{gadducci1996,CorradiniGadducci97} in the context of algebraic presentations of graphical formalisms. These categories abstract the properties of cartesian product of sets in the category of relations on one hand, and on the other they have the right structure to distinguish between relations, partial functions, total relations and functions.

Requiring the existence of both families but  naturality only for comultiplication $\nabla_X$, one obtains categories apt to abstract the notion of \textit{partial} function. These categories have been presented in equivalent forms with the names \textit{ restriction categories with restriction products} by Cockett and Lack in \cite{Cockett02,Cockett03,Cockett07}, \textit{p-categories} by Rosolini and Robinson in \cite{Robinson88} and \textit{partial categories} by Curien and Obtulowicz in \cite{CURIEN198950}.
Currently, the most accepted name seems \emph{cartesian restriction categories}~\cite{Cockett07}.
%
Requiring only the existence of comultiplication $\nabla_X$ and its naturality, one obtains the notion of \textit{categories with diagonals}, introduced by Jacobs in \cite{Jacobs1994} in the context of linear logic.
The existence of comultiplication $\nabla_X$ without naturality gives rise to the notion of \textit{share categories}.

Requiring the existence of both families but naturality only for counit $!_X$, one obtains categories apt to abstract the notion of \textit{total} relation. These categories have been presented in equivalent forms with the names \textit{affine CD categories} by Cho and Jacobs in \cite{cho_jacobs_2019} and Markov categories by Fritz in \cite{Fritz_2020} in the context of categorical probability theory.
Requiring only the existence of discharger $!_X$ and its naturality, one obtains the notion of \textit{categories with projections}, introduced by Jacobs in \cite{Jacobs1994} in the context of linear logic.
The existence of counit $!_X$ without naturality gives rise to the notion of \textit{categories with garbage}.

The categories we recalled above are among the most commonly discussed in the literature on ``categories for relations''. 
In fact, the vast majority of the related papers assumes at least the occurrence of a monoidal category
where each object is a comonoid, possibly equipped with further structure, i.e. requiring the presence of additional families of arrows.
The survey will thus focus on enucleating the connections within the components of our taxonomy,
viewed as the building blocks of the whole spectrum of ``categories for relations'', further exploring 
their dual and preorder-enriched variants.

\subsection{Share categories}
\label{share}
 Here and in the following, we will use string diagrams to represent morphisms in monoidal categories. In this formalism, everything is considered up to associativity of the monoidal product and cancellation of the unit. No generality is lost, since one can either appeal to the strictification of the categories in question \cite[Proposition 3.28]{fongspivak2019}, or incorporate the required coherence isomorphisms into the axioms. This makes it possible to easily introduce the usual auxiliary structures of symmetric monoidal categories (see also Remark 2.3 in \cite{FritzGCT23}).

    \begin{mydefinition}
        A \textbf{share category} is a symmetric monoidal category $(\mC, \otimes, I)$ together with
        a commutative cosemigroup structure for each object $X$,
        consisting of a comultiplication 
        \ctikzfig{copy}
        which is coassociative and cocommutative
        \ctikzfig{comonoid_share_cat}
        These cosemigroup structures must be multiplicative with respect to the monoidal structure, meaning that
        they satisfy the equations
\ctikzfig{comon-struct-mult-share-cat}
\end{mydefinition}

We may refer to $\nabla_X \colon X \to X \otimes X$ as the \textbf{duplicator}.
  \begin{myexample}
      The category $(\Rel, \times,\{\bullet\})$ of sets and relations with the composition of $a\subseteq X\times Y$ with $b\subseteq Y\times Z$  given by
	\[b\circ a:=\{(x,z) \mid \exists y\in Y,(x,y)\in a \wedge (y,z)\in b \}\subseteq X\times Z\]
    and the monoidal operation given by the direct product of sets is a share category. In this case,  $\nabla_X \colon X \to X \times X$ is given by the \emph{function} $x\mapsto \langle x, x \rangle$.
\end{myexample}
\begin{myexample}\label{ex_Nat_is_share_cat}
	The monoidal category $(\mathcal{N},+,0)$, where $\mathcal{N}$ is the posetal category of natural numbers and the monoidal operation $+$ is given by the usual sum of natural numbers,
	 is a share category, where the comultiplication is given by the arrow $n\leq n+n$.
\end{myexample}
\begin{myexample}\label{pfunasshare}
	The leading example of share category is the category of sets and \emph{partial functions}. The monoidal operator is given by the direct product of sets and the monoidal 
	by the singleton set $\{\bullet\}$. The comultiplications are 
given by the functions $X \to X \times X$ such that $a \mapsto \langle a, a \rangle$. We will see in Example~\ref{example: due relazioni} how the share structure of this category can be derived from the gs-monoidal structure of the category of sets and relations.
\end{myexample}
\begin{myexample}\label{ex_Set}
Note that also the monoidal category $(\mathbf{Set},\times,\{\bullet\})$ of sets and functions is a share category, again  with the direct product as the monoidal operator and
as comultiplications the functions $X \to X \times X$ such that $a \mapsto \langle a, a \rangle$. \end{myexample}

The following lemma shows that a share structure can be equivalently given in terms of a monoidal transformation (see Appendix \ref{sec:lax_app}).

\begin{mylemma}\label{rmk:equiv share}
	Given a symmetric monoidal category $(\mC, \otimes,I)$ and the two trivial strong symmetric monoidal functors given by the identity functor
\[(\id, \id_I, \id_{+\otimes -}):\mC \to \mC \]
and the functor $\otimes(-,-):\mC\to\mC$ which sends $X$ to $X\otimes X$
\[(\otimes(-,-), \lambda_I, s_{+,-}):\mC\to \mC\]
where $s_{X,Y}\colon (X\otimes Y)\otimes(X\otimes Y)\to (X\otimes X)\otimes(Y\otimes Y)$ is the symmetry defined as 
$\alpha^-_{X,X,Y\otimes Y} \circ (\id_X\otimes\, \alpha_{X,Y,Y}) \circ (\id_X \otimes (\gamma_{Y,X}\otimes\id_Y)) \circ (\id_X \otimes\, \alpha^-_{Y,X,Y}) \circ \alpha_{X,Y,X\otimes Y}$, where $\alpha$ denotes the associator and $\gamma$ the braiding of the monoidal structure.
A \textbf{share} structure is given by a (not necessarily natural) monoidal transformation
\[\nabla_{-}:(\id,\id_I,\id_{+\otimes -})\to (\otimes(-,-), \lambda_I, \id_+\otimes\, \gamma_{+,-}\otimes \id_{-})\]
satisfying the additional conditions 
\[(\id_X \otimes \nabla_X) \circ \nabla_X = \alpha_{X,X,X} \circ (\nabla_X \otimes \id_X) \circ \nabla_X\]
\[\gamma \circ \nabla_X=\nabla_X.\]
\end{mylemma}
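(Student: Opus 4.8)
The plan is to recognise the statement as essentially an unwinding of definitions: a share structure and the monoidal transformation described here carry the very same underlying data -- a family of morphisms $\nabla_X \colon X \to X \otimes X$, one per object -- and the axioms of one translate verbatim into the axioms of the other. So the proof reduces to matching laws, in both directions, while keeping careful track of the coherence isomorphisms; the qualifier ``not necessarily natural'' is precisely what prevents this from collapsing to the stronger notion of a category with diagonals.

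First I would spell out the two functors as strong symmetric monoidal functors, reading off their comonoidal (oplax) comparison maps as given. The identity functor carries identity comparisons. The functor $D := \otimes(-,-)$, with $D(X) = X \otimes X$, has unit comparison $\lambda_I \colon I \otimes I \to I$ and tensor comparison the interchange isomorphism $s_{X,Y} \colon (X\otimes Y)\otimes(X\otimes Y) \to (X\otimes X)\otimes(Y\otimes Y)$. Here $s_{X,Y}$ is an isomorphism, being a composite of associators and a single braiding, and the verification that $(D, \lambda_I, s)$ satisfies the monoidal-functor coherence axioms is a direct consequence of the coherence theorem for symmetric monoidal categories; I would only sketch this, as it is standard.

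With the functors fixed, a (not necessarily natural) monoidal transformation $\nabla_- \colon \id \Rightarrow D$ is a family $\nabla_X \colon X \to X \otimes X$ subject to exactly two coherence equations, which I would now expand. The unit coherence reads $\nabla_I ; \lambda_I = \id_I$, equivalently $\nabla_I = \lambda_I^{-1}$, which is the nullary multiplicativity axiom of a share category. The tensor coherence reads $\nabla_{X\otimes Y} ; s_{X,Y} = \nabla_X \otimes \nabla_Y$; since $s_{X,Y}$ is invertible, this is the same as $\nabla_{X\otimes Y} = (\nabla_X \otimes \nabla_Y) ; s_{X,Y}^{-1}$, which is precisely the binary multiplicativity axiom displayed in the definition of a share category. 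Thus the monoidal-transformation data together with its two laws amount exactly to the duplicators together with their multiplicativity with respect to $\otimes$ and $I$.

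It then remains to observe that the two extra equations in the statement, namely $\nabla_X ; (\id_X \otimes \nabla_X) = \nabla_X ; (\nabla_X \otimes \id_X) ; \alpha_{X,X,X}$ and $\nabla_X ; \gamma = \nabla_X$, are verbatim the coassociativity and cocommutativity of the cosemigroup structure. Since every step above is an equivalence between equations on the very same family of morphisms, both implications of the lemma follow simultaneously. The one point demanding genuine care -- and the step I expect to be the main obstacle -- is to check that, after inserting the comparison isomorphisms of $\id$ and of $D$, the single tensor-coherence square of the monoidal transformation collapses to the multiplicativity axiom drawn in the definition. This requires resolving the associators and placing the interchange isomorphism on the correct side: note that the braiding $\gamma_{Y,X}$ occurring inside $s_{X,Y}$ is the inverse of the braiding $\gamma_{X,Y}$ appearing in the target comparison $\id_X \otimes \gamma_{X,Y} \otimes \id_Y$, so the two shuffles are mutually inverse and the coherence can be read with either on either side. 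In the strictified setting adopted throughout the paper this reduces to a finite check that both sides induce the same permutation of wires, routine once the strictification is fixed, but the natural place for a direction error to slip in.
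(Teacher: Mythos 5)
Your proposal is correct and follows essentially the same route as the paper's own (much terser) proof: both unwind the definition of monoidal transformation so that the unit and tensor coherence diagrams yield the multiplicativity equations for $\nabla_I$ and $\nabla_{X\otimes Y}$, and both identify the two additional conditions with coassociativity and cocommutativity. Your extra care about the placement and invertibility of the interchange isomorphism $s_{X,Y}$ is a worthwhile elaboration of a step the paper leaves implicit, but it does not change the argument.
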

\begin{proof}

Commutativity of Diagram (\ref{eq:monoidal transformation}) in the Definition~\ref{laxtras}
of monoidal transformation $\nabla_{-}$ provides the equations for $\nabla_{X\otimes Y}$ and $\nabla_I$. The additional conditions correspond to coassociativity and cocommutativity of $\nabla$,
 respectively.
\end{proof}

\begin{myremark}
It is now immediate to see that a share category is a \emph{category with diagonals}~\cite[Def. 2.1]{Jacobs1994}
if $\nabla_{-}$ is a natural transformation.
\end{myremark}

\begin{mydefinition}\label{def relevant functor}
	For share categories $\mC$ and $\mD$, a functor $\freccia{\mC}{F}{\mD}$ equipped with a lax symmetric monoidal structure
   \[
				\freccia{\otimes \circ \, (F\times F)}{\psi}{F\circ \otimes}, \qquad \freccia{I}{\psi_0}{F(I)} 
			\]
is \textbf{relevant} if the following diagram commutes for all $X$ in $\mC$

\begin{equation}\label{diagram: lax relevant}
\begin{tikzcd}[column sep=tiny]
	{F(X)} && {F(X\otimes X)} \\
	& {F(X)\otimes F(X)}
	\arrow["{F(\nabla_X)}", from=1-1, to=1-3]
	\arrow["{\nabla_{FX}}"', from=1-1, to=2-2]
	\arrow["{\psi_{X,X}}"', from=2-2, to=1-3]
\end{tikzcd}
\end{equation}
\end{mydefinition}

\begin{mydefinition}
    An arrow $f:X\to Y$ in a share category is called \textbf{copyable} or \textbf{functional} if
    \ctikzfig{functional}

\end{mydefinition}
\begin{myexample}
	In the category $\Rel$, the copyable arrows are precisely the partial functions.
\end{myexample}
Therefore, the notion of category with diagonals~\cite[Def. 2.1]{Jacobs1994} can be easily rephrased in terms of share category.
\begin{mylemma}
A share category has diagonals if and only if
every arrow is functional.
\end{mylemma}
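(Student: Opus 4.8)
The plan is to observe that, once the definitions are unwound, both sides of the claimed equivalence are literally the same condition, so the argument reduces to a short piece of definition-chasing built on Lemma~\ref{rmk:equiv share} and the Remark immediately following it.

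First I would recall that, by that Remark, a share category has diagonals precisely when the family $\nabla_{-}=\{\nabla_X\}_X$ is a genuine natural transformation, rather than merely the (possibly non-natural) monoidal transformation $\id \Rightarrow \otimes(-,-)$ supplied by Lemma~\ref{rmk:equiv share}. Thus ``having diagonals'' is, by definition, the assertion that the naturality square of $\nabla_{-}$ commutes at every arrow $f\colon X \to Y$.

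Second I would spell out that naturality square. The two functors involved are the identity functor, whose action on $f$ is $f$ itself, and the functor $\otimes(-,-)$, whose action on $f$ is $f\otimes f$. Hence the naturality square at $f$ reads
\[
f;\nabla_Y=\nabla_X;(f\otimes f),
\]
which is exactly the string-diagram equation defining $f$ to be functional (copyable). Consequently the naturality square commutes at $f$ if and only if $f$ is functional, and quantifying over all arrows gives: $\nabla_{-}$ is natural if and only if every $f$ is functional. Combined with the Remark, this yields both implications of the stated equivalence at once.

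The only point requiring care — and the closest thing to an obstacle in what is essentially a definitional unwinding — is to confirm that the morphism part of the functor $\otimes(-,-)$ sends $f$ to $f\otimes f$ (equivalently, that it is the diagonal on morphisms followed by the tensor), so that the right-hand leg of the naturality square matches the right-hand side of the copyable diagram. The coassociativity and cocommutativity data play no role here, since they concern the structure that is being asked to be natural, not the naturality condition itself.
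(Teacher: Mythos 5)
Your proof is correct and matches the paper's intent: the paper states this lemma without proof, treating it as the immediate definitional unwinding you carry out — having diagonals means $\nabla_{-}$ is natural, and the naturality square of $\nabla_{-}$ at an arrow $f$ is literally the equation defining $f$ to be functional.
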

\begin{myexample}
	The share categories $(\mathcal{N},+,0)$  and $(\mathbf{Set},\times,\{\bullet\})$ 
	presented in Example~\ref{ex_Nat_is_share_cat} and Example~\ref{ex_Set} respectively are both categories with diagonals.
\end{myexample}
\begin{mycorollary}
For a share category $\mC$, the sub-category $\mathbf{Fun}(\mC)$ consisting of functional arrows is a category with diagonals.
\end{mycorollary}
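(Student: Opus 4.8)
The plan is to verify that $\mathbf{Fun}(\mC)$ is well-defined as a subcategory and then check that the duplicators $\nabla_X$ of $\mC$ are natural with respect to all arrows of $\mathbf{Fun}(\mC)$, which by the preceding lemma is exactly what it means to have diagonals. First I would confirm that $\mathbf{Fun}(\mC)$ is closed under composition and contains all identities, so that it forms a genuine subcategory. Closure under composition amounts to showing that if $f \colon X \to Y$ and $g \colon Y \to Z$ are functional, then so is $g;f$; diagrammatically, one slides the copying node on the input wire of $g;f$ first past $f$ (using functionality of $f$) and then past $g$ (using functionality of $g$), arriving at the two-copy form that witnesses $g;f$ as functional. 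That identities are functional is immediate, since $\id_X;\nabla_X = \nabla_X = \nabla_X;(\id_X \otimes \id_X)$ holds trivially.

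Next I would observe that $\mathbf{Fun}(\mC)$ inherits the symmetric monoidal structure of $\mC$. For this one needs the monoidal product of two functional arrows to be functional, which follows from the multiplicativity axiom relating $\nabla_{X \otimes Y}$ to $\nabla_X$ and $\nabla_Y$ (the equations asserted in the definition of share category), combined with functionality of the two factors. The structural isomorphisms $\alpha$, $\gamma$, $\lambda$ are all functional as well, which should again reduce to the share axioms together with the naturality of these isomorphisms in the ambient symmetric monoidal category; this guarantees that $\mathbf{Fun}(\mC)$ is a share category in its own right, with the comultiplications $\nabla_X$ restricted from $\mC$.

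Finally, with $\mathbf{Fun}(\mC)$ established as a share category, the defining property of its morphisms is precisely that $f;\nabla_Y = \nabla_X;(f \otimes f)$ for every arrow $f$ of $\mathbf{Fun}(\mC)$. This is exactly the naturality square for the family $\nabla_{-}$ restricted to $\mathbf{Fun}(\mC)$, so $\nabla_{-}$ is a natural transformation on this subcategory. By the lemma characterising share categories with diagonals as those in which every arrow is functional -- equivalently, those in which $\nabla_{-}$ is natural -- we conclude that $\mathbf{Fun}(\mC)$ is a category with diagonals.

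The main obstacle I anticipate is not any single hard step but rather the bookkeeping needed to verify that $\mathbf{Fun}(\mC)$ is closed under the monoidal product and contains the coherence isomorphisms as functional arrows; these require careful use of the multiplicativity axioms and the coherence of the symmetry $s_{X,Y}$ appearing in Lemma~\ref{rmk:equiv share}. Since the paper works up to the strictification and absorbs coherence isomorphisms into the axioms, much of this bookkeeping simplifies, and the essential content reduces to the string-diagram manipulation for closure under composition.
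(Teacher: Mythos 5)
Your proposal is correct and follows the same route the paper intends: the paper leaves this corollary without proof as an immediate consequence of the preceding lemma, and your argument simply fills in the routine verifications (closure of functional arrows under composition, identities, and the monoidal product via the multiplicativity axioms) before invoking that lemma. Nothing is missing and nothing diverges from the paper's approach.
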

\begin{myremark}
	Observe that $\mathbf{Fun}$ provides a functor from the category of share categories and strict relevant functors to the full subcategory of categories with diagonals and strict relevant functors.
\end{myremark}

We close recalling the notion of positivity from~\cite[Def.~11.22]{Fritz_2020}.

\begin{mydefinition}\label{positive}
    A share category is called \textbf{positive} 
    if for every pair of arrows $f:X\to Y$ and $g: Y \to W$ such that 
    $g \circ f$ is functional then
\ctikzfig{positivity}
\end{mydefinition}

\begin{myexample}
	Every category with diagonals is positive. The category of sets and partial functions of Example~\ref{pfunasshare} is a positive share category.
\end{myexample}
\subsection{Categories with garbage}
\label{garbage}

\begin{mydefinition}
    A \textbf{category with garbage} is a symmetric monoidal category $(\mC, \otimes, I)$ together with
     a distinguished arrow for each object $X$
\ctikzfig{del}
        These arrows must be multiplicative with respect to the monoidal structure, meaning that
        they satisfy the equations
\ctikzfig{axiom_grb_cat}
\end{mydefinition}
We may refer to $!_X : X \to I$ as the \textbf{discharger}.
\begin{myexample}
	  The category $(\Rel, \times,\{\bullet\})$ of sets and relations is a category with garbage. In this case,  $!_X \colon X \to \{\bullet\}$ is given by the \emph{relation} $\{(x,\bullet) \mid x \in X\}$.	
\end{myexample}
\begin{myexample}\label{ex_Nat_op_is_garbage_cat}
	The monoidal category $(\mathcal{N}^{\mathrm{op}},+,0)$, where $(\mathcal{N},+,0)$ is the category defined in Example~\ref{ex_Nat_is_share_cat}, is a category with garbage.
\end{myexample}

\begin{myexample}
	The leading example of a category with garbage is the category of sets and \emph{total relations}, i.e.\ relations $R \subseteq X \times Y$ such that for all $x \in X$ there exists $y \in Y$ with $(x,y) \in R$. See Example~\ref{example: due relazioni} for the monoidal structure of this category.  While a leading counterexample is the share category of partial functions of Example~\ref{pfunasshare}
	(and of course vice versa).	
	Also for this case we will see in Example~\ref{example: due relazioni} how the garbage structure of this category can be derived from the gs-monoidal structure of the category of sets and relations.
\end{myexample}

\begin{myexample}\label{ex_Set_G}
The monoidal category $(\mathbf{Set},\times,\{\bullet\})$ of sets and functions with the direct product of Example \ref{ex_Set} as the monoidal operator 
is also a category with garbage, where the arrows are given by the function $a \mapsto \bullet$.
\end{myexample}

The following lemma shows that garbage structure can be equivalently given in terms of a monoidal transformation.

\begin{mylemma}\label{rmk:equiv garbage}
	Given a symmetric monoidal category $(\mC, \otimes,I)$ and the two trivial strong symmetric monoidal functors given by the identity
\[(\id, \id_I, \id_{+\otimes -}):\mC \to \mC \]
and the constant value $I$ functor
\[(I, \id_I, \lambda_I^{-1}):\mC\to \mC\]
a \textbf{garbage} structure is given by a (not necessarily natural) monoidal transformation
\[!_{-}:(\id,\id_I,\id_{+\otimes -})\to (I, \id_I, \lambda_I^{-1}).\]
\end{mylemma}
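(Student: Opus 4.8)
The plan is to follow the same route as the proof of Lemma~\ref{rmk:equiv share}, unpacking the definition of monoidal transformation and checking that its coherence conditions reproduce exactly the axioms of a category with garbage. First I would observe that, since the source functor is the identity and the target is the constant functor at $I$, a transformation $!_{-}$ between them consists precisely of a family of arrows $!_X \colon X \to I$, one for each object. This is exactly the data of the discharger, and by the statement no naturality is imposed.

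Next I would spell out the two coherence diagrams from Definition~\ref{laxtras}. The unit condition, which compares the unit components $\id_I$ of the two functors through $!_I$, collapses to the single equation $!_I = \id_I$. The tensor condition compares the structure maps $\id_{X \otimes Y}$ of the identity functor and $\lambda_I^{-1}$ of the constant functor through the components $!_X$, $!_Y$ and $!_{X \otimes Y}$; it reads $!_X \otimes !_Y = \lambda_I^{-1} \circ\, !_{X \otimes Y}$. Since in the string-diagram calculus we work up to cancellation of the unit, the coherence isomorphism $\lambda_I^{-1}$ is absorbed, and this reduces to $!_{X \otimes Y} = !_X \otimes !_Y$. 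These are exactly the two multiplicativity equations appearing in the definition of a category with garbage, so the two notions coincide.

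I expect the only point requiring care — the analogue of the ``main obstacle'' here — to be the bookkeeping of the coherence isomorphisms: one must check that the orientation chosen for the oplax structure map $\lambda_I^{-1}$ of the constant functor matches the direction of the tensor coherence square, so that the square genuinely reduces to the clean diagrammatic identity rather than to its mate. Unlike the share case, there are no additional conditions to impose, because the discharger carries no internal algebraic structure; there is no coassociativity or cocommutativity analogue for a single counit-like arrow, which is why the statement of the lemma lists none.
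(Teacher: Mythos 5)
Your proposal is correct and follows the same route the paper takes for the analogous share case (Lemma~\ref{rmk:equiv share}): the components of the monoidal transformation give the family $!_X\colon X\to I$, and commutativity of Diagram~(\ref{eq:monoidal transformation}) yields exactly $!_{X\otimes Y}=\,!_X\otimes\,!_Y$ (up to the unitor, which the paper's string-diagram convention absorbs) and $!_I=\id_I$, with no extra conditions needed since a lone counit carries no coassociativity or cocommutativity data. The orientation of $\lambda_I^{-1}$ that you flag is indeed only bookkeeping and resolves as you expect.
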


\begin{myremark}
Similarly to what occurs for share categories, a category with garbage is a \emph{category with projections}~\cite[Def. 2.1]{Jacobs1994}
if $!_{-}$ is a natural transformation.
Notice that categories with projections are also called \emph{semicartesian categories} in \cite{Gerhold2022,Fritz_2020}.
\end{myremark}

\begin{mydefinition}\label{def garbage functor}
	For categories with garbage $\mC$ and $\mD$, a functor $\freccia{\mC}{F}{\mD}$ equipped with a lax symmetric monoidal structure
   \[
				\freccia{\otimes \circ \, (F\times F)}{\psi}{F\circ \otimes}, \qquad \freccia{I}{\psi_0}{F(I)} 
			\]
is \textbf{affine}  if the following diagram commutes for all $X$ in $\mC$
\begin{equation}\label{diagram: lax affine}
\begin{tikzcd}[column sep=tiny]
	F(X) && {F(I)} \\
& I
\arrow["{F(!_X)}", from=1-1, to=1-3]
\arrow["{!_{F(X)}}"', from=1-1, to=2-2]
\arrow["{\psi_0}"', from=2-2, to=1-3]
\end{tikzcd}
\end{equation}
\end{mydefinition}

\begin{mydefinition}
	An arrow $f:X\to Y$ in a category with garbage is called \textbf{discardable} or \textbf{total} if 
	 \ctikzfig{full}

 \end{mydefinition}
\begin{myexample}
	In the category $\Rel$, the discardable arrows are precisely the total relations.
\end{myexample}
\begin{mylemma}
A category with garbage has projections if and only if every arrow is total.
\end{mylemma}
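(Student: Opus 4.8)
The plan is to observe that this statement is essentially a definitional unfolding, completely parallel to the earlier characterisation of categories with diagonals as those share categories in which every arrow is functional. By the remark following Lemma~\ref{rmk:equiv garbage}, a category with garbage has projections exactly when the discharger family $!_{-}$ is a natural transformation. Since $!_{-}$ is viewed there as a monoidal transformation from the identity functor to the constant functor at $I$, the multiplicativity and unit coherence are already guaranteed by the garbage axioms; the only additional content carried by the word ``natural'' is the family of ordinary naturality squares.

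First I would spell out the naturality square at an arbitrary arrow $f \colon X \to Y$. Because the codomain functor is the constant functor at $I$, its action on $f$ is $\id_I$, so the square commutes precisely when $\id_I \circ\, !_X = \,!_Y \circ f$, i.e.\ when $!_Y \circ f = \,!_X$. This is exactly the equation depicted in the string diagram defining a discardable (total) arrow $f$. With this translation in hand, both implications are immediate: if the category has projections then $!_{-}$ is natural, so $!_Y \circ f = \,!_X$ holds for every $f$, meaning every arrow is total; conversely, if every arrow is total then every such square commutes, so $!_{-}$ is natural and the category has projections.

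The only point requiring even a small check — and the closest thing to an obstacle — is confirming that the monoidal-transformation naturality of Lemma~\ref{rmk:equiv garbage} reduces, at the level of the underlying transformation, to the ordinary pointwise naturality squares. This is immediate in the present setting because the two functors involved are the identity and the constant functor at $I$, so the monoidal-compatibility data impose no constraint beyond the garbage axioms already assumed. I would therefore present the argument as a direct chain of equivalences rather than as two separately argued implications, mirroring the proof strategy used for the dual statement on share categories and categories with diagonals.
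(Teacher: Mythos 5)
Your argument is correct and coincides with the reasoning the paper leaves implicit: the lemma is a definitional unfolding, since the naturality square for $!_{-}$ at $f\colon X\to Y$ (with codomain the constant functor at $I$) is exactly the equation $!_Y\circ f= {!_X}$ defining totality. The paper states this lemma without proof precisely because the equivalence is immediate, and your identification of the only point needing care — that the monoidal-transformation data add nothing beyond the garbage axioms — is accurate.
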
 

\begin{myremark}\label{terminal}
Notice that in a garbage category every arrow is total if and only if the object $I$ is terminal.
\end{myremark} 

\begin{mycorollary}
For a garbage category $\mC$, the sub-category $\mathbf{Tot}(\mC)$ of total arrows is a category with projections.
\end{mycorollary}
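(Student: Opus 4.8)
The plan is to verify that the total arrows form a symmetric monoidal subcategory on which the discharger becomes natural, so that the earlier characterisation of categories with projections applies directly. First I would check that $\mathbf{Tot}(\mC)$ is genuinely a subcategory of $\mC$. The identity is total, since $!_X \circ \id_X = !_X$; and if $f\colon X\to Y$ and $g\colon Y\to Z$ are total, then $!_Z\circ(g\circ f)=(!_Z\circ g)\circ f=!_Y\circ f=!_X$, so composites of total arrows are total. This gives closure under composition and identities.

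Next I would show that total arrows are stable under the monoidal product and that the coherence data of $\mC$ lie in $\mathbf{Tot}(\mC)$. For stability under $\otimes$, given total arrows $f\colon X\to Y$ and $g\colon X'\to Y'$, the multiplicativity axiom $!_{Y\otimes Y'}=!_Y\otimes!_{Y'}$ yields $!_{Y\otimes Y'}\circ(f\otimes g)=(!_Y\circ f)\otimes(!_{Y'}\circ g)=!_X\otimes!_{X'}=!_{X\otimes X'}$, so $f\otimes g$ is total. Working up to strictification, as agreed at the start of the section, the associators and unitors are identities and hence trivially total, so the only structural map requiring genuine attention is the braiding.

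The key step is therefore to prove that the braiding is total, i.e.\ that $!_{Y\otimes X}\circ\gamma_{X,Y}=!_{X\otimes Y}$. I would apply naturality of $\gamma$ to the pair $!_X\colon X\to I$ and $!_Y\colon Y\to I$, obtaining $(!_Y\otimes!_X)\circ\gamma_{X,Y}=\gamma_{I,I}\circ(!_X\otimes!_Y)$; since $\gamma_{I,I}=\id_I$ and, by multiplicativity, $!_X\otimes!_Y=!_{X\otimes Y}$ while $!_Y\otimes!_X=!_{Y\otimes X}$, this rearranges to exactly the desired identity. Thus $\mathbf{Tot}(\mC)$ inherits a symmetric monoidal structure together with the restricted dischargers, making it a category with garbage.

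Finally, since every arrow of $\mathbf{Tot}(\mC)$ is total by construction, the naturality square of $!_-$ commutes throughout the subcategory; equivalently, the earlier lemma stating that a category with garbage has projections precisely when every arrow is total applies directly to $\mathbf{Tot}(\mC)$. Hence $\mathbf{Tot}(\mC)$ is a category with projections. I expect the argument to be formally dual to the one establishing that $\mathbf{Fun}(\mC)$ is a category with diagonals, with the only non-routine verification being the totality of the braiding.
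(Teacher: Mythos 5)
Your proof is correct and follows the same route the paper intends: the corollary is stated there without proof precisely because it is the immediate consequence of the lemma that a garbage category has projections iff every arrow is total, once one checks (as you do) that $\mathbf{Tot}(\mC)$ is a monoidal subcategory containing the structural data. The only detail you leave implicit is that the dischargers themselves are total, i.e.\ $!_I\circ !_X = !_X$, which holds since multiplicativity forces $!_I=\id_I$; with that one-line check your argument is complete.
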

\begin{myremark}
	Observe that $\mathbf{Tot}$ provides a functor from the category of garbage categories and strict affine functors to the full sub-category of categories with projections and strict affine functors.
\end{myremark}

\subsection{GS-monoidal categories}
\label{gs-mon}
The original notion of \emph{gs-monoidal category} introduced in~\cite{gadducci1996,CorradiniGadducci97} can be presented combining the notions of share and garbage category.
\begin{mydefinition}\label{dfn: gs-monoidal}
	 A \textbf{gs-monoidal category} is a symmetric monoidal category $(\mC, \otimes, I)$ with share and garbage structures such that for each object $X$
	 \ctikzfig{copy_del_equation}
	\end{mydefinition}

In other words, each object is equipped with a commutative comonoid structure.

We can now close this section with a result that is basically stated in~\cite{Fox:CACC}, but see e.g. \cite[\S 6.4]{mellies2009categorical} for a contemporary presentation.

\begin{mylemma}
A gs-monoidal category is cartesian monoidal if and only if $\nabla_{-}$ and $!_{-}$ are natural transformations.
\end{mylemma}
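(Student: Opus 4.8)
The plan is to prove this as a direct application of the classical result of Fox, recast in the language already developed in the excerpt. Recall that a gs-monoidal category is a symmetric monoidal category in which each object $X$ carries a commutative comonoid structure $(\nabla_X, !_X)$, obtained by combining the share and garbage structures (Definition~\ref{dfn: gs-monoidal}). Fox's theorem states precisely that a symmetric monoidal category is cartesian monoidal exactly when every object has a natural commutative comonoid structure compatible with the tensor. Thus the content of the lemma is to verify that, on top of the comonoid data we already have, naturality of $\nabla_{-}$ and $!_{-}$ is exactly the extra ingredient needed, and nothing more.

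The forward direction is the easier half. Assuming $\mC$ is cartesian monoidal, the tensor is the categorical product, $I$ is terminal, and for each object the comonoid structure is forced to be the diagonal $\nabla_X = \langle \id_X, \id_X\rangle$ and the unique map $!_X \colon X \to I$ to the terminal object. First I would argue that these canonical choices coincide with the given share and garbage structures: uniqueness of comonoid structures on an object in a cartesian category pins them down. Naturality of $\nabla_{-}$ then follows from the universal property of the product (any $f \colon X \to Y$ satisfies $\langle \id_Y,\id_Y\rangle \circ f = (f \times f)\circ \langle \id_X, \id_X\rangle$, which is just the definition of the pairing), and naturality of $!_{-}$ is immediate since $I$ is terminal, so every square $!_Y \circ f = !_X$ commutes automatically.

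For the converse, assume $\nabla_{-}$ and $!_{-}$ are natural transformations on the gs-monoidal category $\mC$. By the Remark following Lemma~\ref{rmk:equiv share} and the Remark following Lemma~\ref{rmk:equiv garbage}, naturality of $\nabla_{-}$ makes $\mC$ a category with diagonals, and naturality of $!_{-}$ makes it a category with projections, equivalently (Remark~\ref{terminal}) the unit $I$ becomes terminal. The plan is then to exhibit, for each pair of objects $X, Y$, the projections $\pi_X = (\id_X \otimes\, !_Y) \colon X \otimes Y \to X$ and $\pi_Y = (!_X \otimes\, \id_Y) \colon X \otimes Y \to Y$, and show that $(X \otimes Y, \pi_X, \pi_Y)$ is a product. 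Given $f \colon Z \to X$ and $g \colon Z \to Y$, the candidate pairing is $\langle f, g\rangle := \nabla_Z\,;(f \otimes g)$. One verifies that $\pi_X \circ \langle f, g\rangle = f$ and $\pi_Y \circ \langle f, g\rangle = g$ using the comonoid axioms together with discardability, which follows because every arrow is total once $I$ is terminal.

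The main obstacle, and the crux of Fox's argument, is uniqueness of the pairing: I must show that any $h \colon Z \to X \otimes Y$ satisfies $h = \langle \pi_X \circ h,\, \pi_Y \circ h\rangle$. Here is where both naturality conditions are used in an essential and combined way. Writing $h = \langle \pi_X h, \pi_Y h\rangle$ amounts to the identity $h = \nabla_Z\,;((\pi_X\,h) \otimes (\pi_Y\,h))$, and the standard derivation uses naturality of $\nabla$ applied to $h$ to slide $\nabla_{X \otimes Y}\circ h = (h \otimes h)\circ \nabla_Z$, followed by the comonoid axioms on $X \otimes Y$ (which decompose $\nabla_{X\otimes Y}$ into the copies on $X$ and $Y$ via the multiplicativity equations of the share structure) and then the discarding maps together with $I$ being terminal to eliminate the unwanted factors. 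Rather than reproduce this calculation, I would cite the classical result, following the excerpt's own suggestion: the statement is essentially that of~\cite{Fox:CACC}, with the contemporary string-diagrammatic treatment in~\cite[\S 6.4]{mellies2009categorical}, to which I refer for the full verification that the naturality of both $\nabla_{-}$ and $!_{-}$ upgrades the comonoid structure to a genuine cartesian product.
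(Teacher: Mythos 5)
Your proposal is correct and is in essence the same route the paper takes: the paper states this lemma without proof, deferring entirely to Fox~\cite{Fox:CACC} and the presentation in~\cite[\S 6.4]{mellies2009categorical}, exactly the references you invoke. Your added sketch (uniqueness of the comonoid structure for the forward direction; projections $\id_X\otimes\, !_Y$, pairing $\nabla_Z;(f\otimes g)$, and naturality of $\nabla$ plus the multiplicativity and counit equations for uniqueness of the pairing in the converse) is the standard and correct argument.
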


\begin{mycorollary}\label{cor_TFun_is_cartesian}
The sub-category $\mathbf{TFun}(\mC)$ of total and functional arrows is cartesian monoidal.
\end{mycorollary}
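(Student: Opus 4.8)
The plan is to exhibit $\mathbf{TFun}(\mC)$ as a gs-monoidal subcategory of $\mC$ in which, by construction, both the duplicators and the dischargers are natural, and then to invoke the preceding lemma, which states that a gs-monoidal category is cartesian monoidal precisely when $\nabla_{-}$ and $!_{-}$ are natural transformations. Once the gs-monoidal structure of $\mC$ is shown to restrict to $\mathbf{TFun}(\mC)$, the corollary is immediate.

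First I would check that total and functional arrows form a wide subcategory. The identity $\id_X$ is trivially copyable and discardable. Closure under composition is a short calculation: if $f\colon X\to Y$ and $g\colon Y\to Z$ are functional, then $\nabla_Z \circ (g \circ f) = (g \otimes g)\circ \nabla_Y \circ f = ((g\circ f)\otimes(g\circ f))\circ\nabla_X$ by functoriality of $\otimes$, and dually $!_Z\circ(g\circ f) = !_Y\circ f = !_X$ for totality. Thus $\mathbf{TFun}(\mC)$ is a subcategory, and in fact the comultiplications and the dischargers themselves lie in it: that each $\nabla_X$ and each $!_X$ is both total and functional is a direct consequence of coassociativity, cocommutativity, and the counit laws (the comonoid axioms guaranteed by the compatibility equation of Definition~\ref{dfn: gs-monoidal}).

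Next I would verify that this subcategory is symmetric monoidal, indeed gs-monoidal. Here the multiplicativity axioms of the share and garbage definitions do the work: they express $\nabla_{X\otimes Y}$ and $!_{X\otimes Y}$ in terms of $\nabla_X,\nabla_Y$ and $!_X,!_Y$, which is exactly what is needed to show that $f\otimes g$ is total (resp.\ functional) whenever $f$ and $g$ are, and that the coherence isomorphisms $\gamma,\alpha,\lambda,\rho$ are themselves total and functional. Consequently the restricted share and garbage structures make $\mathbf{TFun}(\mC)$ a gs-monoidal category inherited from $\mC$. Finally, by the very definition of $\mathbf{TFun}(\mC)$ every arrow is copyable and discardable, so the naturality squares for $\nabla_{-}$ and $!_{-}$ commute for all its arrows; both families are therefore natural, and the preceding lemma yields that $\mathbf{TFun}(\mC)$ is cartesian monoidal.

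I expect the main obstacle to be the monoidal closure step — specifically, checking that the coherence isomorphisms are functional and total and that the two classes of arrows are stable under $\otimes$ — since this is where the multiplicativity axioms must be used with care rather than routine functoriality. One can alternatively organise the argument so as to sidestep a direct verification: first pass to $\mathbf{Fun}(\mC)$, which is a category with diagonals and still retains the garbage structure because each $!_X$ is functional (so that $\nabla_I\circ !_X = (!_X\otimes !_X)\circ\nabla_X = !_X$ by the counit law), and then take its total part, invoking the two preceding corollaries to conclude that the result has both diagonals and projections, hence is cartesian.
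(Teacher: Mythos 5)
Your argument is correct and follows exactly the route the paper intends: the corollary is stated without proof because it is the combination of the preceding lemma (a gs-monoidal category is cartesian monoidal iff $\nabla_{-}$ and $!_{-}$ are natural) with the fact, recorded in Lemma~\ref{prop: 2.10 e 2.11 FGTC23} via \cite[Prop.~2.10--2.11]{FritzGCT23}, that $\mathbf{TFun}(\mC)$ is a gs-monoidal subcategory in which every arrow is by definition copyable and discardable. The details you supply (closure under composition and tensor, and the structural arrows being total and functional) are precisely the content delegated to that reference, so your write-up matches the paper's proof in substance.
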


   \begin{myexample}\label{example: due relazioni}
   The category $(\mathbf{Set}, \times, \{\bullet\})$ of sets and functions with the direct product is gs-monoidal, and in fact cartesian monoidal.
    The category $(\Rel, \times,\{\bullet\})$ of sets and relations with the composition of $a\subseteq X\times Y$ with $b\subseteq Y\times Z$  given by
	\[b\circ a:=\{(x,z) \mid \exists y\in Y,(x,y)\in a \wedge (y,z)\in b \}\subseteq X\times Z\]
    and the monoidal operation  
    given by the direct product of sets 
    is the leading example of gs-monoidal category \cite{CorradiniGadducci02}. 
    In this category, the copyable arrows are precisely the partial functions, and the discardable arrows are the total relations.
\end{myexample}
	
\begin{myexample}
Recently, an alternative category of relations $(\Rel^{\forall}, \otimes,\{\bullet\})$ has been investigated~\cite{bonchi2024,bonchi_trotta_2024}. The category $\Rel^{\forall}$ has the same objects and arrows as $\Rel$, but the composition of a relation $a\subseteq X\times Y$ with $b\subseteq Y\times Z$ is given by
	\[b\circ a=\{(x,z) \mid \forall y\in Y,(x,y)\in a \vee (y,z)\in b \}\subseteq X\times Z\]
	and the identity arrow is given by $\id_X=\{\left(x,y\right)| x\neq y\}\subseteq X\times X$.
The tensor product is defined on objects as the direct product of sets (as in the previous case), and on two arrows $a\subseteq X\times Y$ and $c\subseteq Z\times V$
is given by 
\[ a\otimes c= \{\left(\left(x,z\right), \left(y,v\right)\right) |\ \left(x,y\right)\in a \vee \left(z,v\right) \in c   \}\]
	This category is gs-monoidal with the following structure arrows
	\[\nabla_X=\{\left(x,\left( y,z\right) \right)| x\neq y \vee x\neq z\}\subseteq X\times (X \times X)\qquad !_X= \emptyset\subseteq X\times I\]
	Note that $\Rel^{\forall}$ is isomorphic to $\Rel$ via a ``complement functor'', which is strict symmetric monoidal~\cite{bonchi2024}.
	Although we are not aware of a characterisation of copyable relations for this case, it is easy to 
	show that the discardable relations are precisely those $a\subseteq X\times Y$ such that
	$\forall x \in X.\; \{y \in Y \mid (x, y) \in a\} \neq Y$.
	
    \end{myexample}
     
We close now with an observation from \cite[Prop. 2.10\&2.11]{FritzGCT23}.

\begin{mylemma}\label{prop: 2.10 e 2.11 FGTC23}
	Let $(\mC,\otimes, I)$ be a gs-monoidal category. Then $\mathbf{Fun}(\mC)$, $\mathbf{Tot}(\mC)$, and $\mathbf{TFun}(\mC)$ are gs-monoidal sub-categories of $\mC$.
\end{mylemma}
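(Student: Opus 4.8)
The plan is to treat the three subcategories in turn, verifying in each case the conditions that make a subcategory gs-monoidal: that it is closed under identities and composition (so it is a genuine subcategory), that it is closed under the monoidal product $\otimes$ and contains the coherence isomorphisms, and that it contains all the structure arrows $\nabla_X$ and $!_X$ of $\mC$. Since $\mathbf{TFun}(\mC)=\mathbf{Fun}(\mC)\cap\mathbf{Tot}(\mC)$, the third case will follow by intersecting the first two, so the real work concerns $\mathbf{Fun}(\mC)$ and $\mathbf{Tot}(\mC)$.

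For the subcategory conditions I would appeal to the corollaries already established, namely that functional arrows form a category with diagonals and total arrows a category with projections; these in particular record that identities are copyable (resp.\ discardable) and that these properties are preserved by composition. The latter are the one-line computations $\nabla_Z\circ(g\circ f)=(g\otimes g)\circ(f\otimes f)\circ\nabla_X=((g\circ f)\otimes(g\circ f))\circ\nabla_X$ and, dually, $!_Z\circ(g\circ f)=!_Y\circ f=!_X$, both using only functoriality of $\otimes$.

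Next I would check that the gs-monoidal structure arrows lie in each class. That $\nabla_X$ is copyable is exactly coassociativity together with cocommutativity, while $\nabla_X$ is discardable by the copy--delete (counit) axiom of Definition~\ref{dfn: gs-monoidal}. For the discharger, $!_X$ is discardable because $!_I=\id_I$, and $!_X$ is copyable since the counit law gives $(!_X\otimes!_X)\circ\nabla_X=!_X=\nabla_I\circ!_X$. The symmetry, associator and unitors are copyable and discardable by the multiplicativity axioms for $\nabla$ and $!$, with cocommutativity handling the braiding, so the whole coherence datum of $\mC$ already lives in each subcategory.

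The main obstacle is closure under $\otimes$. For $\mathbf{Tot}(\mC)$ this is short: if $f\colon X\to Y$ and $g\colon Z\to W$ are discardable, multiplicativity of $!$ yields $!_{Y\otimes W}\circ(f\otimes g)=(!_Y\circ f)\otimes(!_W\circ g)=!_X\otimes!_Z=!_{X\otimes Z}$. For $\mathbf{Fun}(\mC)$ the corresponding statement is the delicate step: starting from $\nabla_{Y\otimes W}\circ(f\otimes g)$, I would first expand $\nabla_{Y\otimes W}$ via the multiplicativity axiom as $\nabla_Y\otimes\nabla_W$ followed by the reordering symmetry built from $s_{Y,W}$, then use copyability of $f$ and $g$ to rewrite $\nabla_Y\circ f$ and $\nabla_W\circ g$, and finally invoke naturality of the braiding to slide the resulting copies of $f$ and $g$ through the symmetry, reassembling $\nabla_{X\otimes Z}$ on the source side to obtain $((f\otimes g)\otimes(f\otimes g))\circ\nabla_{X\otimes Z}$. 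This is a clean but multi-step string-diagram rewrite, and it is the only place where the interplay between the comultiplication's multiplicativity and the naturality of the symmetry is genuinely needed. Collecting these facts shows that the share and garbage structures of $\mC$ restrict to each of the three subcategories, which are therefore gs-monoidal subcategories; moreover $\mathbf{TFun}(\mC)$ is cartesian monoidal by Corollary~\ref{cor_TFun_is_cartesian}.
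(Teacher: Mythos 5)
Your proof is correct and follows the standard direct-verification route: the paper itself gives no argument here, simply citing \cite[Prop.~2.10\,\&\,2.11]{FritzGCT23}, and what you write is essentially the computation that reference carries out (closure under composition and $\otimes$, plus copyability/discardability of the structure and coherence arrows, with the symmetry-sliding step for $\nabla_{Y\otimes W}\circ(f\otimes g)$ being the only nontrivial rewrite). The lone imprecision is attributing the copyability of the braiding to cocommutativity — it really follows from naturality of $\gamma$ and symmetric-monoidal coherence applied to the reordering map $s$ — but this does not affect the validity of the argument.
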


    As for functors between symmetric monoidal categories, also functors between gs-monoidal categories come in several variants. 
   
\begin{mydefinition}\label{def gs monoidal functor}
	For gs-monoidal categories $\mC$ and $\mD$, a functor $\freccia{\mC}{F}{\mD}$ equipped with a lax symmetric monoidal structure
   \[
				\freccia{\otimes \circ \, (F\times F)}{\psi}{F\circ \otimes}, \qquad \freccia{I}{\psi_0}{F(I)} 
			\]
is \textbf{gs-monoidal} if it is both relevant and affine.
\end{mydefinition}

\subsection{Recurring examples: spans and (weighted) relations}\label{ex_spans_are_gs}\label{example: span + gs monoidale}

 Recall from \cite{Bruni2003} that the category $\mathbf{Span}(\mA)$ of \emph{spans} associated with a category with finite limits  $\mA$ is a gs-monoidal category. The category $\mathbf{Span}(\mA)$ has the same objects 
	as $\mA$, and an arrow from $X$ to $Y$ is a \emph{span}, i.e. an equivalence class of diagrams of the form
		 $(X  \xleftarrow{a_X} A \xrightarrow{a_Y} Y)$ of $\mA$, where $(A,a_X,a_Y)\sim (B,b_X,b_Y)$ if there exists an isomorphism $i:A\to B$ such that $b_X\circ i=a_X$ and $b_Y\circ i=a_Y$. Identities and composition are defined as follows
	\begin{itemize}
		\item the identity of $X$ is the span $X \xleftarrow{\id_X}X  \xrightarrow{\id_X} X$;
		\item the composition of spans $X\leftarrow  A \xrightarrow{f} Y$ and $Y\xleftarrow{g}  B \rightarrow Z$ is given by the span $X\leftarrow A\times_{Y}B \to Z$ obtained through the pullback of $f$ and $g$
		\[
		\begin{tikzcd}[row sep=small]
			&& {A\times_YB} \\
			& A && B \\
			X && Y && Z
			\arrow[from=1-3, to=2-2]
			\arrow[from=1-3, to=2-4]
			\arrow["\lrcorner"{anchor=center, pos=0.125, rotate=-45}, draw=none, from=1-3, to=3-3]
			\arrow[from=2-2, to=3-1]
			\arrow["f", from=2-2, to=3-3]
			\arrow["g"', from=2-4, to=3-3]
			\arrow[from=2-4, to=3-5]
		\end{tikzcd}\]
	\end{itemize}
	The tensor product is given via the categorical product $\times$ of $\mathcal{A}$ and the gs-monoidal structure is given by the arrows
	\[
	\nabla_X = (X \xleftarrow{\id} X \xrightarrow{\nabla_X} X \times X), \qquad !_X = (X \xleftarrow{\id} X \xrightarrow{!_X} 1).
\]
Although the duplicator  $\nabla_X$ and the discharger $!_X$ are natural in $\mathcal{A}$, in general they are not in $\mathbf{Span}(\mA)$ as observed in \cite{Bruni2003} and, in an equivalent way, in \cite[Ex. 2.1.4(10)]{Cockett02} where it is noted that $\mathbf{Span}(\mA)$ is not a restriction category (see also Proposition \ref{gs as restriction}). We will refer to this gs-monoidal category as $(\mathbf{Span}(\mathcal{A}),\times,1)$.

\begin{myremark}
It is worth observing that the sub-category $\mathbf{Span}_m(\mA)$ of  $\mathbf{Span}(\mA)$  of spans whose left leg is a mono is gs-monoidal as well (since monos are stable under pullbacks) and in fact it has diagonals. 
This category is a particular instance of a \emph{category of partial maps} $\mathbf{Par}(\mA, \mathcal{M})$ associated with a stable system of monics $\mathcal{M}$ presented in \cite[Sec.~3.1]{Cockett02}.

Moreover, the sub-category $\mathbf{Span}_e(\mA)$ of $\mathbf{Span}(\mathcal{A})$ 
of spans whose left leg is a split epimorphism\footnote{An arrow $f:A\to B$ splits if it has a \textit{section}, i.e.\ an arrow $s:B\to A$ such that $f\circ s=\id_B$.} 
has projections, see \cite[Prop.~5.4]{FritzGCT23}.

\end{myremark}

	If $\mathcal{A}$ is an extensive category  (see  Definition~\ref{def:extensive category} and \cite{carboni1993introduction} for details) with finite limits, then the category $\mathbf{Span}(\mA)$
	has another gs-monoidal structure. The tensor product is given by the categorical sum $+$ and the gs-monoidal structure is given by the arrows
	\[
	\nabla_X = (X \xleftarrow{(\id, \id)} X+X \xrightarrow{\id} X + X), \qquad !_X = (X \xleftarrow{\cobang} 0 \xrightarrow{\id} 0).
\]
This category is actually cartesian and we will refer to it as $(\mathbf{Span}(\mA),+,0)$.

\subsubsection{From spans to relations}
\label{example: relazioni x sono gs monoidale }
	It is well-known that the notion of category of relations $\Rel$ of Example \ref{example: due relazioni} can be generalised to regular categories (see \cite[Ex. 1.4]{CARBONI198711}) or, alternatively, to categories equipped with a proper, stable factorization system (see \cite{Kelly_92}). Indeed, let $\mathcal{A}$ be a regular category (Definition~\ref{def:regular category}), then consider the category of relations $\mathbf{Rel}(\mathcal{A})$
	whose objects are those of $\mathcal{A}$ and arrows are given by jointly monic spans. The composition of relations $X\xleftarrow{f_X}  A \xrightarrow{f_Y} Y$ and $Y\xleftarrow{g_Y}  B \xrightarrow{g_Z} Z$ is given by first considering the span $X\leftarrow A\times_{Y}B \to Z$ obtained by taking the pullback of $f$ and $g$,
	and then considering the regular-epi/mono factorization of the induced arrow $\langle f_X\circ g_Y',g_Z\circ f_Y'\rangle:A\times_{Y}B\to X\times Z$
	\[
	\begin{tikzcd}
		&& {A\times_{Y}B} \\
		& A & {A\circ B} & B \\
		X && Y && Z
		\arrow["{g_Y'}"', from=1-3, to=2-2]
		\arrow[dashed, two heads, from=1-3, to=2-3]
		\arrow["{f_Y'}", from=1-3, to=2-4]
		\arrow[from=2-2, to=3-1]
		\arrow["f_Y"', from=2-2, to=3-3]
		\arrow[dashed, from=2-3, to=3-1]
		\arrow[dashed, from=2-3, to=3-5]
		\arrow["g_Y", from=2-4, to=3-3]
		\arrow[from=2-4, to=3-5]
	\end{tikzcd}\]
	 The categorical product $\times$ induces a gs-monoidal structure as in $( \mathbf{Span}(\mathcal{A}),\times,1)$, and this gs-monoidal category will be denoted as $(\mathbf{Rel}(\mathcal{A}), \times,1)$.
	From a logical perspective, the main reason why the $\mathbf{Rel}$-construction can be generalised in this setting is that regular categories are able to properly deal with the $(\exists,=,\wedge,\top)$-fragment of first-order logic (the \emph{regular fragment}). In particular, they have an ``internal'' notion of existential quantifier, which allows to mimic the usual composition of relations (which is defined via the existential quantifier).
	Similarly, if $\mathcal{A}$ is also extensive, then the coproduct induces a cartesian monoidal structure as in $(\mathbf{Span}(\mathcal{A}),+,0)$, which
	we will denote as $(\mathbf{Rel}(\mathcal{A}),+,0)$.

\begin{myremark}\label{spas as rel}
For a regular category $\mathcal{A}$ in which every regular epi splits, the category of relations  is equivalent to a category of spans.
Namely, $\mathbf{Rel}(\mathcal{A})$ is equivalent to the category whose objects are those of $\mathcal{A}$ and whose arrows are equivalence classes of diagrams of the form
$(X  \xleftarrow{a_X} A \xrightarrow{a_Y} Y)$ of $\mA$, where $(A,a_X,a_Y)\sim (B,b_X,b_Y)$ if there exist two arrows $h:A\to B$ and $k:B\to A$ such that 
$$\begin{cases}
	b_X\circ h=a_X \\
	b_Y\circ h=a_Y
\end{cases} \qquad \begin{cases}
	a_X\circ k=b_X \\
	a_Y\circ k=b_Y
\end{cases} $$
Indeed, every span $(X  \xleftarrow{a_X} A \xrightarrow{a_Y} Y)$ is in the same equivalence class of $(X  \xleftarrow{m_X} M \xrightarrow{m_Y} Y)$, where $m:=\angbr{m_X}{m_Y}$ is the image of the factorization of $\angbr{a_X}{a_Y}$
\[
\begin{tikzcd}
	A && M \\
	& {X\times Y}
	\arrow["e"', two heads, from=1-1, to=1-3]
	\arrow["{\langle a_X, a_Y\rangle}"',from=1-1, to=2-2]
	\arrow["s"', curve={height=12pt}, from=1-3, to=1-1]
	\arrow["m", tail, from=1-3, to=2-2]
\end{tikzcd}\]
In particular, the above observation holds for $\mathcal{A}=\mathbf{Set}$, where this kind of spans are referred to 
as \textit{garbage equivalent} ones \cite{Gadducci08}.
\end{myremark}

\subsubsection{And now, weighted relations}
\label{WR}\label{example: monad semiring}
While the previous section generalised relations by changing the base category, an alternative approach
is to equip each relation with a \emph{weight}.
Consider a semiring $(M,\oplus,\odot, 0, 1)$. It is easy to check that 
there exists a functor $\mathcal{M}:\mathbf{Set}\to\mathbf{Set}$ that sends every set $X$ to 
 \[\mathcal{M}(X)=\left\{ h:X \to M \ |\  h\  \text{has finite support}\right\}\]
where \emph{finite support} means that $h(x)\neq 0$  for a finite number of elements $x\in X$, and every function $f:X\to Y$ to the function $\tilde{f}:\mathcal{M}(X)\to \mathcal{M}(Y)$ which sends every $M$-valued function $h:X\to M$ with finite support 
to 
\[\tilde{f}(h)(y)= \underset{x\in f^{-1}(y)}{\bigoplus} h(x)\]
Explicitly, the image of the composition $ g \circ f$ of $f:X\to Y$ and $g:Y\to Z$ is
\[\widetilde{g \circ f}(h)(z)= \underset{y\in g^{-1}(z)}{\bigoplus}(\underset{x\in f^{-1}(y)}{\bigoplus} h(x))\]

Recall that $(\mathbf{Set}, \times, \{\bullet\})$ is cartesian monoidal with respect to the direct product.
The above functor is lax symmetric monoidal with respect to that monoidal structure, with the obvious 
coherence arrows 
\[\psi_{X,Y}:\mathcal{M}(X)\times \mathcal{M}(Y)\to \mathcal{M}(X\times Y)
\qquad\psi_0:\{\bullet\}\to \mathcal{M}(\{\bullet\})\]
given by $\psi_{X,Y}(h,k)(x,y)=h(x)\odot k(y)$ and $\psi_0(\bullet)(\bullet)=1$.

There are two additional endofunctors $\mathcal{M}_e$ and $\mathcal{M}_u$ on $\mathbf{Set}$ 
that are lax symmetric monoidal with respect to the monoidal structure $(\mathbf{Set}, \times, \{\bullet\})$. These are defined as follows
\[\mathcal{M}_e(X)=\left\{ h:X \to M \ |\  h\  \text{has support at most one and is idempotent}\right\}\]
\[\mathcal{M}_u(X)=\left\{ h:X \to M \ |\  h\  \text{has finite support and is normalised}\right\}\]
where idempotent means that $\forall x \in X.\, h(x) = h(x) \odot h(x)$ and normalised that $\bigoplus_{x \in X}h(x) = 1$.
Besides being lax symmetric monoidal functors, $\mathcal{M}_e$ is relevant and $\mathcal{M}_u$ is affine.

\begin{myremark}\label{power}
The setting above is general enough to recover various relational structures defined in the literature.
	If e.g. $M$ is the Boolean semiring $\{0,1\}$, then $\mathcal{M}$ is the lax symmetric monoidal 
	functor $\mathcal{P}$ associating to $X$ its finite subsets, and it is neither relevant nor affine.
	The relevant functor $\mathcal{P}_e$ is restricted to subsets of at most one element,
	while the affine functor $\mathcal{P}_u$ is restricted to subsets with at least one element. 

\end{myremark}

\subsection{On duality}
The definitions of the gs-monoidal structure can be easily dualised.

\begin{mydefinition}
    A \textbf{cogs-monoidal category} is a symmetric monoidal category $(\mC, \otimes, I)$
    such that its dual category $\mC^{op}$ is a gs-monoidal category.
\end{mydefinition}
    
In words, each object $X$ is equipped with a monoid structure
$\Delta_X : X \otimes X \to X $ and $\cobang_X: I \to X$, namely there exist two arrows
\ctikzfig{co_copy_co_del}
satisfying the obvious axioms.

\begin{myremark}
We assume, without repeating the statements, that all the remarks in the previous sections concerning sharing and garbage can be dualised, so that e.g. 
$\Delta_X : X \otimes X \to X $ and $\cobang_X: I \to X$ can be described as suitable monoidal transformations, and that their naturality boils down to require 
that a cogs-monoidal category is actually cocartesian monoidal. See also Appendix~\ref{section: appendix dual results}.
\end{myremark}

\begin{myexample}\label{example: span x + are also cogs}
	The categories of spans introduced in Section \ref{ex_spans_are_gs} 
	are cogs-monoidal. Indeed,
	if $\mathcal{A}$ has finite limits, $(\mathbf{Span}(\mathcal{A}),\times,1)$ has a cogs-monoidal structure given by the arrows
	\[\Delta_X  = (X\times X \xleftarrow{\nabla_X} X \xrightarrow{\id} X ), \qquad \cobang_X = (1 \xleftarrow{!} X \xrightarrow{\id} X).
\]
If $\mathcal{A}$ is also an extensive category, $(\mathbf{Span}(\mathcal{A}),+,0)$ has cogs-monoidal structure 
(and it is in fact cocartesian) given by the arrows
\[	\Delta_X = (X+X \xleftarrow{\id} X+X \xrightarrow{(\id,\id)} X ), \qquad \cobang_X = ( 0\xleftarrow{\id} 0 \xrightarrow{\cobang} X).
\]
The same considerations apply for the category $\mathbf{Rel}(\mathcal{A})$ and the gs-monoidal structures presented in Examples \ref{example: relazioni x sono gs monoidale }. 
\end{myexample}

\begin{mydefinition}
    A \textbf{bigs-monoidal category} is a symmetric monoidal category $(\mC, \otimes, I)$ that is 
    both a gs-monoidal and a cogs-monoidal  category.
\end{mydefinition}

Notice that the definition of a bigs-monoidal category includes no requirement regarding the interaction between the monoid and the comonoid structures. The rest of this section is devoted to studying some relevant axioms that establish the possible ways these structures interact, see \cite{heunen2012lectures} for further details.

\begin{mydefinition}\label{dfn: specal and connected bimonoid}
	A bigs-monoidal category $(\mathcal{C},\otimes,I)$ is \textbf{special} if the law below on the left holds, while
	it is \textbf{connected} if the law below on the right holds
	\ctikzfig{actionmonadexamplestrict}
\end{mydefinition}

%

We will further elaborate on bigs-monoidality later in this paper, and we start by presenting two straightforward instances.

\begin{mydefinition}
    A \textbf{bialgebraic category} is a bigs-monoidal category $(\mC, \otimes, I)$ such that the following equalities hold
    \ctikzfig{cipr1eq}
	\ctikzfig{cipr3eq}
\end{mydefinition}

\begin{myremark}
Bialgebraic categories have been investigated in the flownomial calculus
for flowchart description proposed by G. \c{S}tef\u{a}nescu, see e.g. the axioms for angelic branching in \cite{NAGheorghe}
and the references therein.
\end{myremark}

\begin{myremark}
\label{isoToI}
If a bialgebraic category satisfies the law
\ctikzfig{hopf}
 then each object carries the structure of an Hopf algebra~\cite{hopf}, where the antipode for an object $X$ is given by 
 $\id_X$.  
Note that this property does not imply that such a category is special or,
in this case equivalently, connected.
In fact, if a bialgebraic category is connected then 
both $\cobang_X\circ !_X= \id_X$ and 
$!_X\circ \cobang_X= \id_I$ hold, 
that is, each object is isomorphic to $I$.
\end{myremark}

\begin{myexample}\label{example: bialgebra}
	Looking again at Example \ref{example: span x + are also cogs}, we have that the bigs-monoidal 
	categories of spans $(\mathbf{Span}(\mathcal{A}), +,0)$ and relations $(\mathbf{Rel}(\mathcal{A}), +,0)$
	are bialgebraic categories, and in fact they are bicartesian (i.e. both cartesian and cocartesian) monoidal.
	Neither is connected, but the latter is special, the former is not.
	
\end{myexample}

\begin{mydefinition}
	A \textbf{Frobenius category} is a bigs-monoidal category $(\mC, \otimes, I)$ such that the Frobenius law holds, i.e.
	\ctikzfig{frobeniuscorta}
\end{mydefinition}

\begin{myremark}
	The Frobenius law can be used to obtain (see~\cite{pastro2009weak})
	\ctikzfig{frobeniusconsequence}
	and if moreover an object is special~\cite{Bruni2003}
	\ctikzfig{frobeniusconsequence2}
\end{myremark}

\begin{myexample}\label{example: frobenius span}
	Looking once more at Example \ref{example: span x + are also cogs}, we have that the bigs-monoidal 
	categories of spans $(\mathbf{Span}(\mathcal{A}), \times,1)$ and relations $(\mathbf{Rel}(\mathcal{A}), \times,1)$ are Frobenius categories.
	Neither is connected, but both are special.
\end{myexample}

\begin{myremark}
Frobenius categories are also compact-closed (sometimes called 
rigid symmetric monoidal categories), where the dual of an object $X$ is 
the object itself and the unit and counit arrows are defined as $\nabla_A \circ \cobang_A: I \to A \otimes A$ and $\Delta_A \circ !_A: A \otimes A \to I$,
respectively, while the triangle identities trivially hold. 
\end{myremark}

\begin{myremark}
	The Frobenius law is incompatible with the bialgebraic structure: A category that is both Frobenius and bialgebraic
	is also connected, thus 
	each object $X$ 
	is isomorphic to $I$ (see Remark~\ref{isoToI}). Indeed, we have 
	\ctikzfig{frob_bialg_banale}
where the first equality is obtained through the bigs-monoidal structure, the second through the Frobenius law and the last two
through bialgebra equalities. 
\end{myremark}



\subsection{On Markov and restriction categories}

Two categorical notions have come to the forefront in recent years: Markov categories~\cite{Fritz_2020} as models for probabilistic systems, and
restriction categories~\cite{Cockett02,Cockett03,Cockett07} as abstraction of partial functions. This section establishes their correspondence with the notions introduced 
in Sections~\ref{share},~\ref{garbage}, and~\ref{gs-mon}.

\subsubsection{Markov categories}
\begin{mydefinition}
	A \emph{Markov} category is a gs-monoidal category $(\mC, \otimes, I)$ such that for each object $X$ the arrow $!_X: X \to I$ is a terminal morphism, i.e. for each arrow $f:X\to I$ it holds that $f=!_X$.
\end{mydefinition}

\begin{myproposition}
	GS-monoidal categories with projections  correspond exactly to Markov categories.
\end{myproposition}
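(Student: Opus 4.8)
The plan is to show that both notions coincide with the single condition that the monoidal unit $I$ is terminal, and then to chain together the equivalences already established for categories with garbage. Since a gs-monoidal category is in particular a category with garbage, all the results of Section~\ref{garbage} apply to it verbatim, and the proof reduces to assembling them.

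First I would recall the two relevant facts. The lemma characterising projections states that a category with garbage has projections---i.e. $!_-$ is natural, meaning $!_Y \circ f = !_X$ for every $f \colon X \to Y$---if and only if every arrow is total (discardable). By Remark~\ref{terminal}, every arrow is total if and only if the unit $I$ is terminal. Composing these two equivalences immediately gives that a gs-monoidal category has projections if and only if $I$ is terminal. It then remains only to match the terminality of $I$ with the definition of a Markov category: the arrow $!_X \colon X \to I$ always supplies an arrow into $I$ (existence), so the extra content of the Markov condition---that every $f \colon X \to I$ equals $!_X$---is exactly the uniqueness required for $I$ to be terminal. Hence the phrases ``$!_X$ is a terminal morphism for all $X$'' and ``$I$ is terminal'' say the same thing, and the two directions of the correspondence follow.

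Concretely, for the implication from Markov to projections I would argue that if $I$ is terminal then, for any $f \colon X \to Y$, both $!_Y \circ f$ and $!_X$ are arrows $X \to I$ and so coincide by uniqueness, which is precisely the naturality of $!_-$; conversely, naturality of $!_-$ makes every arrow total, whence $I$ is terminal by Remark~\ref{terminal} and the Markov condition holds. I do not expect a genuine obstacle, as all the real work is carried out by the lemma on projections and by Remark~\ref{terminal}; the only step deserving care is the bookkeeping identification of the Markov condition with terminality of $I$, noting that $!_I = \id_I$ so that $I$ is the terminal object and not merely weakly terminal.
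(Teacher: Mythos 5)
Your argument is correct and follows the same route as the paper's own (very brief) proof: both reduce the statement to the observation that having projections is equivalent to $I$ being terminal (via Remark~\ref{terminal}) and that terminality of $I$ is exactly the Markov condition. You simply spell out the chain of equivalences and the two directions in more detail than the paper does.
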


\begin{proof}
Straightforward, since the presence of projections boils down to the unit $I$ being the terminal object (see Remark~\ref{terminal}), which
in fact is a precise characterisation of Markov categories (see \cite{Fritz_2020}).
\end{proof}

\subsubsection{Restriction categories}
\begin{mydefinition}\label{dfn: restriction category completa}
	A \textbf{restriction structure} on a category  $\mC$ is an assignment which sends every arrow $f:X\to Y$ of $\mC$ to an arrow $\overline{f}:X\to X$ such that the following conditions hold
		
\begin{description}
	\item[(R.1)] $f \circ \overline{f}=f$,
	\item[(R.2)] $\overline{f} \circ \overline{g} = \overline{g} \circ \overline{f}$ for $g: X \to W$,
	\item[(R.3)] $\overline{g \circ \overline{f}}= \overline{g} \circ \overline{f}$ for $g: X \to W$,
	\item[(R.4)] $\overline{g} \circ f= f\circ \overline{g \circ f}$ for $g: Y \to W$.
\end{description}
A \textbf{restriction category} is a category equipped with a restriction structure.

A restriction category has \textbf{restriction terminal object} if there exists an object $I$ such that for every $X\in\mC$ there is a total arrow $t_X:X\to I$ such that $t_I=id_I$ and for every $f:X\to Y$ we have $t_Y \circ f=t_X \circ \overline{f}$.

A restriction category has \textbf{restriction binary products} if there exist a restriction functor 
$-\times -: \mC\times \mC\to \mC$
(i.e.\ such that $\overline{f\times g}=\overline{f}\times \overline{g}$) and total arrows $\Delta: X\to X\times X$, $p:X\times Y\to X$ and $q:X\times Y\to Y$ satisfying

\[\begin{tikzcd}[row sep=10pt]
	& X &&& {X\times Y} \\
	X & {X\times X} & X && {X\times Y\times X\times Y} & {X\times Y} \\
	 \\ 
	{X\times Y} & {X\times Y} & {X\times Y} && X & X \\
	X && Y &&& {X\times X} \\
	{X'} & {X'\times Y'} & {Y'} && {X'} & {X'\times X'}
	\arrow["id"', from=1-2, to=2-1]
	\arrow["\Delta"', from=1-2, to=2-2]
	\arrow["id", from=1-2, to=2-3]
	\arrow["\Delta"', from=1-5, to=2-5]
	\arrow["id", from=1-5, to=2-6]
	\arrow["p", from=2-2, to=2-1]
	\arrow["q"', from=2-2, to=2-3]
	\arrow["{p\times q}"', from=2-5, to=2-6]
	\arrow["p"', from=4-1, to=5-1]
	\arrow["{\overline{f}\times\overline{g}}"', from=4-2, to=4-1]
	\arrow["{\overline{f}\times\overline{g}}", from=4-2, to=4-3]
	\arrow["{f\times g}"', from=4-2, to=6-2]
	\arrow["q", from=4-3, to=5-3]
	\arrow["{\overline{f}}", from=4-5, to=4-6]
	\arrow["f"', from=4-5, to=6-5]
	\arrow["\Delta", from=4-6, to=5-6]
	\arrow["f"', from=5-1, to=6-1]
	\arrow["g", from=5-3, to=6-3]
	\arrow["{f\times f}", from=5-6, to=6-6]
	\arrow["p", from=6-2, to=6-1]
	\arrow["q"', from=6-2, to=6-3]
	\arrow["\Delta"', from=6-5, to=6-6]
\end{tikzcd}\]
A \textbf{cartesian restriction category} is a restriction category equipped with restriction terminal object and restriction binary products.
\end{mydefinition}
\begin{myproposition}\label{gs as restriction}
	GS-monoidal categories with diagonals  correspond exactly to cartesian restriction categories.
\end{myproposition}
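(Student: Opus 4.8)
The plan is to exhibit mutually inverse ways of passing between the two kinds of structure on a fixed symmetric monoidal category, so that ``correspond exactly'' means that each gs-monoidal-with-diagonals structure determines a unique restriction-with-restriction-products structure and vice versa. Throughout I work up to the coherence isomorphisms, as agreed at the start of Section~\ref{share}, and I abbreviate the \emph{effect} of an arrow $f\colon X\to Y$ by $d_f := {!_Y}\circ f\colon X\to I$. Given a gs-monoidal category with diagonals I define the candidate restriction operator by
\[ \overline{f} \;:=\; (\id_X\otimes d_f)\circ \nabla_X \colon X\to X, \]
i.e.\ copy $X$, run $f$ on the second copy and discharge it. Conversely, given a restriction category with restriction products I set $\nabla_X:=\Delta$ and ${!_X}:=t_X$, and take the monoidal structure supplied by the restriction products.

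For the forward direction the key preliminary observation is that, since every arrow is functional (the diagonals assumption, i.e.\ naturality of $\nabla$), the effects $X\to I$ form a \emph{commutative and idempotent} monoid under $d\cdot d':=(d\otimes d')\circ\nabla_X$: commutativity and associativity are cocommutativity and coassociativity of $\nabla$, while idempotency $d\cdot d=d$ is exactly copyability of $d$ together with $\nabla_I=\id_I$. From here the restriction axioms fall out cheaply. (R.1) is the counit law $(\id_X\otimes{!_X})\circ\nabla_X=\id_X$ applied after copying $f$ through $\nabla$. (R.4) follows because copyability of $f$ rewrites both $\overline{g}\circ f$ and $f\circ\overline{g\circ f}$ as $(f\otimes d_{g\circ f})\circ\nabla_X$. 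For (R.2) and (R.3) one checks the single identity $\overline{d}\circ\overline{d'}=\overline{d\cdot d'}$ (using copyability of $\overline{d'}$, coassociativity, and idempotency to merge the two copies of $d'$); then (R.2) is commutativity of the effect monoid and (R.3) reduces to $d_{g\circ\overline f}=d_g\cdot d_f$. Finally $I$ is a restriction terminal object with $t_X:={!_X}$ (note ${!_X}$ is total, since $\overline{{!_X}}=\id_X$ again by the counit law), and the monoidal product gives restriction binary products with diagonal $\nabla$ and projections $\id_X\otimes{!_Y}$ and ${!_X}\otimes\id_Y$.

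For the backward direction, coassociativity, cocommutativity and the comonoid/multiplicativity equations for $\nabla=\Delta$ and ${!}=t$ are read off from the restriction-product diagrams and the restriction-functor condition $\overline{f\times g}=\overline f\times\overline g$; here $I$ is a restriction terminal object but need \emph{not} be terminal, which is exactly why ${!}=t$ is not required to be natural and we land in the ``diagonals'' rather than the cartesian case (cf.\ Remark~\ref{terminal}). The one point requiring care is full naturality of $\nabla$: the restriction-product axiom only yields $\Delta\circ f=(f\times f)\circ\Delta\circ\overline f$, and one upgrades this to $\Delta\circ f=(f\times f)\circ\Delta$ by absorbing $\overline f$ using (R.1) in the form $f\circ\overline f=f$ (after commuting $\overline f$ past $\Delta$ as $(\overline f\times\overline f)\circ\Delta$). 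This is precisely copyability of $f$, so the resulting share structure has diagonals.

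It remains to check that the two assignments are mutually inverse: starting from a gs-monoidal-with-diagonals structure, forming $\overline{(-)}$ and then recovering $\nabla,{!}$ returns the original comonoid (immediate, since $\Delta=\nabla$ and $t={!}$ are left untouched), and starting from a restriction structure the operator $(\id\otimes(t\circ{-}))\circ\Delta$ reproduces the given $\overline{(-)}$ by uniqueness of restriction structures compatible with restriction products. I expect the main obstacle to be the bookkeeping on the \emph{product} side rather than the bare restriction side: verifying that the projections $\id\otimes{!}$ and ${!}\otimes\id$ are total, that $-\otimes-$ is a restriction functor (i.e.\ $\overline{f\times g}=\overline f\times\overline g$, which needs multiplicativity of both $\nabla$ and ${!}$ plus commutativity of effects), and dually that the restriction-product axioms encode exactly the naturality of $\nabla$ and no more -- so that the correspondence is tight and does not accidentally force naturality of ${!}$.
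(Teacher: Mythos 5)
Your forward direction coincides with the paper's: your $\overline{f}=(\id_X\otimes({!_Y}\circ f))\circ\nabla_X$ is exactly the arrow $\mathrm{dom}(f)$ that the paper takes as the restriction operator, and you correctly attribute (R.1) and (R.4) to the diagonals assumption while (R.2) and (R.3) hold in any gs-monoidal category. One caveat: your justification of $\overline{d}\circ\overline{d'}=\overline{d\cdot d'}$ invokes ``copyability of $\overline{d'}$'' and idempotency of effects, both of which depend on diagonals; if these were genuinely needed, (R.2) and (R.3) would not hold in a general gs-monoidal category, contrary to what the paper (correctly) asserts. In fact that identity follows from the interchange law together with coassociativity and cocommutativity alone, so your conclusion stands but the recorded dependencies are wrong.

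The genuine gap is in the backward direction, at the step establishing naturality of $\Delta$. You pass from $(f\times f)\circ\Delta\circ\overline{f}$ to $(f\times f)\circ(\overline{f}\times\overline{f})\circ\Delta$ by ``commuting $\overline{f}$ past $\Delta$'', but that commutation is precisely naturality of $\Delta$ at the arrow $\overline{f}$, i.e.\ an instance of the statement being proved. The only axiom available (the last diagram in Definition~\ref{dfn: restriction category completa}) yields $\Delta\circ\overline{f}=(\overline{f}\times\overline{f})\circ\Delta\circ\overline{\overline{f}}=(\overline{f}\times\overline{f})\circ\Delta\circ\overline{f}$, and substituting this back into your computation returns you to where you started. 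The claim is nevertheless true, but the non-circular route is different: one first proves $\overline{(f\times f)\circ\Delta}=\overline{f}$ --- the inequality $\overline{(f\times f)\circ\Delta}\le\overline{f}$ by post-composing with the total projection $p$ and using $p\circ(f\times g)=f\circ p\circ(\overline{f}\times\overline{g})$ together with $p\circ\Delta=\id$, the reverse one by taking restrictions of both sides of $\Delta\circ f=(f\times f)\circ\Delta\circ\overline{f}$ and using totality of $\Delta$, and then antisymmetry of the order on restriction idempotents --- and only afterwards applies (R.1) in the form $h\circ\overline{h}=h$ with $h=(f\times f)\circ\Delta$. The paper itself sidesteps this entirely by deferring the backward direction to Cockett--Lack and to \cite{FritzGCT23}; your attempt is more ambitious, but as written this step fails. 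The concluding ``mutually inverse'' check likewise leans on an unproved ``uniqueness of restriction structures compatible with restriction products'' where a short direct computation of $(\id_X\otimes(t_Y\circ f))\circ\Delta_X=\overline{f}$ is what is actually required.
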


\begin{proof}
Moving from gs-monoidal categories to restriction categories, the core of the proof occurs in \cite[Rem. 2.15]{FritzGCT23}, where gs-monoidal categories are shown to have enough structure to define the 
\textit{domain} of an arrow $f:X\to Y$ as
	\ctikzfig{domain}
Then, taking $\bar{f}\coloneqq \mathrm{dom}(f)$, the axioms ($R.2$) and ($R.3$) of Definition~\ref{dfn: restriction category completa}
actually hold for any gs-monoidal category, while ($R.1$) and ($R.4$) hold for the presence of diagonals.
The proof that they have restriction products is a straightforward check. We refer to \cite{FritzGCT23,Cockett07} and the references therein for all the details.
\end{proof}

\begin{myremark}
	The previous proposition can be dualised, providing the characterisation of \emph{corestriction categories}, see \cite[Ex. 2.1.3(12)]{Cockett02}, in terms of cogs-monoidal categories.
\end{myremark}
\begin{myremark}
An alternative proof of Proposition~\ref{gs as restriction} can be found in~\cite[Ex. 2.1.3(6-7)]{Cockett02} and \cite[Sec. 4]{Cockett07}.
Note that cartesian restriction categories have been proved equivalent to several other notions, such as  p-categories with one-element object~\cite{Robinson88}, 
partial cartesian categories in the sense of Curien and Obtulowicz \cite{CURIEN198950}, and they are a special case of the bicategories of partial maps of Carboni~\cite{Carboni_87}.
See also Proposition~\ref{oplax as restriction} later.
\end{myremark}

\begin{myremark}
 In \cite{cho_jacobs_2019}, the authors  independently introduced \textit{CD categories},
 which are the same as gs-monoidal categories and they used the term \textit{affine} CD categories for what are today known as Markov categories. The term Markov category was introduced in \cite{Fritz_2020}. In the context of Markov categories, functional arrows are referred to as \textit{deterministic}.
\end{myremark}


\section{A taxonomy of Kleisli categories}
\label{sec:taxonomy kleisli}

Our taxonomy of Kleisli categories is done for symmetric monoidal monads, which are equivalent to commutative monads, see \cite{Kock72} and now \cite[Appendix~C]{Fritz_2021}. 
We discuss three classes of such monads: affine and relevant monads, first considered in \cite{Kock71,Jacobs1994}, and gs-monoidal monads (which are both affine and relevant). 

We then study the properties of their Kleisli categories depending on the categorical structures we previously considered. Indeed, rephrasing \cite[Thm. 4.3]{Jacobs1994}, the Kleisli category of a relevant/affine monad on a cartesian category is respectively a restriction/Markov category. Hence, we extend this result considering affine and relevant monads on our 
taxonomy. 

For instance, in the following diagram  we represent for a relevant monad the corresponding nature of its Kleisli category using a dashed arrow. Hence, we prove that the Kleisli category of a relevant 
monad on a cartesian restriction category is a cartesian restriction category, while the Kleisli category of a relevant monad on a Markov category is just gs-monoidal.

\[
\begin{tikzcd}
	& {\text{GS-monoidal}} \\
	{\text{Markov}} && {\hspace{-2cm}\text{Cartesian restriction}} \\
	& {\text{Cartesian monoidal}}
	\arrow[dashed, from=1-2, to=1-2, loop, in=55, out=125, distance=10mm]
	\arrow[from=2-1, to=1-2]
	\arrow[shift left=3, curve={height=-18pt}, dashed, from=2-1, to=1-2]
	\arrow[from=2-3, to=1-2]
	\arrow[dashed, from=2-3, to=2-3, loop, in=325, out=35, distance=10mm]
	\arrow[from=3-2, to=2-1]
	\arrow[from=3-2, to=2-3]
	\arrow[shift right=3, curve={height=18pt}, dashed, from=3-2, to=2-3]
\end{tikzcd}\]

A similar taxonomy for the enriched case is going to be provided in Section~\ref{sec:oplax cartesian categories}, 
where we consider the Kleisli categories of enriched monads that are \textit{colax relevant} and \textit{colax affine}, as well as
 \textit{colax gs-monoidal}.

\begin{mydefinition}[Symmetric monoidal monad]\label{dfn: symmetric monoidal monad}
	Let $\mathcal{C}$ be a symmetric monoidal category. Let $T : \mathcal{C} \to \mathcal{C}$ be a monad carrying the structure of a lax symmetric monoidal functor with structure maps $\freccia{\otimes \circ \, (T\times T)}{c}{T\circ \otimes}$ and $u : I \to TI$. Then $T$ is a \textbf{symmetric monoidal monad} if $u = \eta_I$ and the following two diagrams commute
\begin{equation}\label{diagram: symmetric monoidal monad 1}
\begin{tikzcd}
& X \otimes Y \arrow[dl, "\eta \otimes \eta"'] \arrow[dr, "\eta"] & \\
TX \otimes TY \arrow[rr, "c"'] & & T(X \otimes Y)
\end{tikzcd}
\end{equation}

\begin{equation}\label{diagram: symmetric monoidal monad 2}
	\begin{tikzcd}
TTX \otimes TTY \arrow[r, "c"] \arrow[d, "\mu \otimes \mu"'] & T(TX \otimes TY) \arrow[r, "Tc"] & TT(X \otimes Y) \arrow[d, "\mu"] \\
TX \otimes TY \arrow[rr, "c"'] & & T(X \otimes Y)
\end{tikzcd}
\end{equation}

\end{mydefinition}
\begin{myremark}
\label{rem:comm_vs_symmon}
	It is well-known that on a symmetric monoidal category,  symmetric monoidal monads are equivalent to commutative monads, see \cite[Th.~2.3]{Kock72} and \cite[Th.~3.2]{Kock70}. Definition \ref{dfn: symmetric monoidal monad} corresponds to the notion of monad internal to the 2-category of symmetric monoidal categories, lax functors and monoidal natural transformations. The commutativity of diagrams (\ref{diagram: symmetric monoidal monad 1}) and (\ref{diagram: symmetric monoidal monad 2}) means that $\mu$ and $\eta$ are monoidal natural transformations.
\end{myremark}

\begin{myexample}\label{monadM}
Let us consider the lax symmetric monoidal functor $\mathcal{M}$ introduced in Section~\ref{WR}.
This functor defines a symmetric monoidal monad on $(\mathbf{Set}, \times, \{\bullet\})$,
sometimes called the \emph{semiring monad}, 
with natural transformations $\eta_X: X\to \mathcal{M}(X)$ and $\mu_X:\mathcal{M}(\mathcal{M}(X))\to\mathcal{M}(X)$ given by
\begin{itemize}
	\item[-] \(\eta_X(x_0)(x)=\begin{cases}
		1\ \text{if}\ x=x_0\\
		0\ \text{otherwise}
	\end{cases}\)
	\item[-] $\mu_X(\lambda)(x)=\underset{h\in\mathcal{M}(X)}{\bigoplus}\lambda(h)\cdot h(x)$
	\end{itemize}

\end{myexample}

\begin{myremark}\label{weightMon}
	Notice that, in the previous example, if $M$ is the semiring of Booleans, then  $\mathcal{M}$ is the finite subsets monad $\mathcal{P}$, while 
	if $M$ is the semiring of natural numbers $\mathbb{N}$, then $\mathcal{M}$ is the 
	monad of \textit{finite multisets}, see \cite{Gadducci08}. 
	Moreover, if $M$ is the semiring of positive real numbers $\mathbb{R}^+$, then $\mathcal{M}$ is the monad of \textit{unnormalised probability distributions}, see \cite{cho_jacobs_2019}, while if $M$ is either the semiring $([0,1],\text{max},\text{min}, 0, 1)$ or the semiring $([0,1],\text{max},\cdot, 0, 1)$ one obtains Golubtsov's categories of 
	\textit{fuzzy information transformer}, see~\cite{golubtsov2002monoidal}. 
\end{myremark}

\begin{mydefinition}\label{dfn: various monads}
	A symmetric monoidal monad $(T,\mu, \eta,c, u)$  on a gs-monoidal category $\mC$ is
	\begin{itemize}
	\item \textbf{affine} if $(T,c,u)$ is an affine functor;
	 \item \textbf{relevant} if  $(T,c,u)$ is a relevant  functor;
	\item \textbf{gs-monoidal} if $(T,c,u)$ is a gs-monoidal functor.
	\end{itemize} 
\end{mydefinition}

\begin{myremark}
	Affine and relevant monads were introduced in \cite{Kock71} in the context of cartesian monoidal categories. While the term ``affine'' is used in \cite{Kock71}, the term ``relevant'' appears in \cite{Jacobs1994} due to a connection with \textit{relevant logic}.
In particular, in \cite[Th.~2.1]{Kock71} the author considers as definition of affine monads on a cartesian monoidal category one of the two equivalent conditions
\begin{itemize}
	\item the unit of the monad $\eta_I:I\to T(I)$ is an isomorphism
	\item the following diagram commutes
	\[
	\begin{tikzcd}
		{T(X)\otimes T(Y)} & {T(X\otimes Y)} \\
		& {T(X)\otimes T(Y)}
		\arrow["{\psi_{X,Y}}", from=1-1, to=1-2]
		\arrow["id"', Rightarrow, no head, from=1-1, to=2-2]
		\arrow["{\langle T(\pi_1),T(\pi_2)\rangle}", from=1-2, to=2-2]
	\end{tikzcd}\] 
\end{itemize}
Indeed, if the category $\mC$ is cartesian, then $I$ is terminal. This fact, together with the commutativity of  diagram (\ref{diagram: lax affine}) of Definition~\ref{def garbage functor} for $X=I$, which states that $\eta_I \circ !_{T(I)}=\id_{T(I)}$, 
implies that $\eta_I$ is an isomorphism. 
Hence, Definition~\ref{dfn: various monads} generalises that of affine monads for cartesian monoidal categories since it can be applied in contexts in which $I$ is not terminal, as in gs-monoidal categories.

For relevant monads, Kock proves in \cite{Kock71} that the condition for relevant functors (diagram (\ref{diagram: lax relevant}) of Definition~\ref{def relevant functor}) 
is equivalent to the commutativity of the diagram
\[
\begin{tikzcd}[column sep= huge]
	{T(X\otimes Y)} & {T(X)\otimes T(Y)} \\
	& {T(X\otimes Y)}
	\arrow["{\langle T(\pi_1),T(\pi_2)\rangle}", from=1-1, to=1-2]
	\arrow["id"', Rightarrow, no head, from=1-1, to=2-2]
	\arrow["{\psi_{X,Y}}", from=1-2, to=2-2]
\end{tikzcd}\]
see \cite[Prop.~2.2]{Kock71} and also \cite[Lem.~4.2]{Jacobs1994}.
\end{myremark}

\begin{myremark}
 Observe that for any object $X$ of a gs-monoidal category, the hom-set $\mC(X,I)$ has canonically 
 the structure of a commutative monoid. The multiplication
 \[\mC(X,I)\times \mC(X,I)\to \mC(X,I)\]
 sends a pair of arrows $(f,g)$ to $\lambda_I \circ (f\otimes g)\circ \nabla_X$, while the unit is given by $!_X$.
 This simple observation can be used to generalise the ordinary notion of affine categories and monads, by requiring additional algebraic properties for such a monoid. Indeed, the more general notions \emph{weakly Markov} category and \emph{weakly affine} monad 
 have been introduced in~\cite{FritzGPT23}: Weakly Markov categories are gs-monoidal categories such that the monoid $\mC(X,I)$ is a group. For the corresponding weakly affine monads,
 instead of assuming $T(I)$ to be isomorphic to $I$,  it is just required that the commutative monoid structure of $T(I)$ is a group.
 We are not aware of a diagrammatic characterisation as for affine categories and monads, making it applicable to gs-monoidal and relevant categories.
\end{myremark}

\begin{myremark}\label{rmk: funtore C ---> C_T}
	It is well-known that a monad $(T,\mu,\eta,c,u)$ on a symmetric monoidal category $\mC$ induces a 
	symmetric monoidal structure on $\mC_T$ if the monad is symmetric monoidal. In particular, 
	the tensor product on $\mC_T$ is defined as $X\otimes^{\sharp} Y:= X\otimes Y$ and $f\otimes^{\sharp} g:= c_{X',Y'}\circ(f\otimes g)$, where $f:X\to X'$ and $g:Y\to Y'$ are arrows of $\mC_T$.
	The obvious functor $\mathcal{K}:\mC\to\mC_T$, which is the identity on objects and acts by post-composition with $\eta$ on the arrows, is strict symmetric monoidal
\[\otimes^{\sharp}\circ(\mathcal{K}\times \mathcal{K})= \mathcal{K}\circ \otimes\]
and the arrows defining the symmetric monoidal structure on $\mC_T$ are obtained as the image of those defining the structure of $\mC$ through the functor $\mathcal{K}$. 

Hence, it is possible to conclude that $\mathcal{K}$ preserves equalities of arrows
$f=g$ of $\mC$, where $f$ and $g$ are obtained through compositions and products of the structural arrows of $\mC$. 
\end{myremark}


  The above argument was already observed in \cite{FritzGCT23} (see also \cite[Lem.~2]{jacobsdion}) and it is the key step in proving the following result given in \cite[Prop.~4.4]{FritzGCT23}.

  \begin{myproposition}\label{prop: Kleisli FGTC}
  Let $(T,\mu, \eta,c, u)$ be a symmetric monoidal monad on a gs-monoidal category $\mC$. Then the Kleisli category $\mC_T$ is a gs-monoidal category with 
   $\nabla_X$ and $!_X$ given for every object $X$ by 
  \[\nabla^\sharp_X:=\eta_{X\otimes X} \circ \nabla_X,\qquad !^\sharp_X:=\eta_I \circ !_X.\] 
  \end{myproposition}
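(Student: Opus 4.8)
The plan is to use the strict symmetric monoidal functor $\mathcal{K}\colon\mC\to\mC_T$ of Remark~\ref{rmk: funtore C ---> C_T} as a vehicle for transporting the gs-monoidal axioms from $\mC$ to $\mC_T$. The first observation is that the proposed structure maps are precisely the $\mathcal{K}$-images of the original ones: since $\mathcal{K}$ is the identity on objects and acts by post-composition with $\eta$, we have $\nabla^\sharp_X=\eta_{X\otimes X}\circ\nabla_X=\mathcal{K}(\nabla_X)$ and $!^\sharp_X=\eta_I\circ !_X=\mathcal{K}(!_X)$, read as arrows $X\to X\otimes^\sharp X$ and $X\to I$ of $\mC_T$ (recall that $X\otimes^\sharp X=X\otimes X$ and $\mathcal{K}(I)=I$). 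We already know from Remark~\ref{rmk: funtore C ---> C_T} that $(\mC_T,\otimes^\sharp,I)$ is symmetric monoidal and that $\mathcal{K}$ is strict symmetric monoidal, hence it sends the associator, braiding and unitors of $\mC$ to those of $\mC_T$.

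With this in place, each defining equation of a gs-monoidal category --- coassociativity and cocommutativity of $\nabla^\sharp$, multiplicativity of $\nabla^\sharp$ and $!^\sharp$ with respect to $\otimes^\sharp$, and the comonoid counit law relating $\nabla^\sharp$ to $!^\sharp$ --- is an equality between two arrows built from duplicators, dischargers, associators, braidings and unitors by composition and tensor. I would verify each axiom by applying $\mathcal{K}$ to the corresponding equation, already valid in $\mC$ by hypothesis. Because $\mathcal{K}$ is a functor we have $\mathcal{K}(g\circ f)=\mathcal{K}(g)\circ^\sharp\mathcal{K}(f)$ (Kleisli composition), and because it is strict monoidal we have $\mathcal{K}(f\otimes g)=\mathcal{K}(f)\otimes^\sharp\mathcal{K}(g)$ together with the strict preservation of the coherence isomorphisms; thus the image of each $\mC$-equation is exactly the required $\mC_T$-equation in $\nabla^\sharp$ and $!^\sharp$. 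For instance, applying $\mathcal{K}$ to $\nabla_X;(\id_X\otimes\nabla_X)=\nabla_X;(\nabla_X\otimes\id_X);\alpha_{X,X,X}$ yields coassociativity of $\nabla^\sharp_X$ in $\mC_T$, and analogously for the remaining laws.

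The crux therefore reduces to the single fact, recorded in Remark~\ref{rmk: funtore C ---> C_T}, that $\mathcal{K}$ preserves every equality between arrows obtained by composition and tensoring of the structural arrows of $\mC$. This is the only genuinely non-formal step, and it is exactly where the hypothesis that $T$ is a \emph{symmetric monoidal} monad enters: the coherence diagrams~(\ref{diagram: symmetric monoidal monad 1}) and~(\ref{diagram: symmetric monoidal monad 2}) of Definition~\ref{dfn: symmetric monoidal monad} ensure that $\eta$ and $\mu$ are monoidal natural transformations, which is what makes $\otimes^\sharp$ well defined and turns $\mathcal{K}$ into a strict symmetric monoidal functor. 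Once this is granted, no separate diagram chase is needed for the individual gs-monoidal axioms: they all follow from functoriality and strict monoidality of $\mathcal{K}$. The one point demanding a little care is mere bookkeeping, namely checking that the ``shuffle'' isomorphisms in the multiplicativity axioms for $\nabla^\sharp_{X\otimes Y}$ and $!^\sharp_{X\otimes Y}$ really are the $\mathcal{K}$-images of those in $\mC$, which holds because $\mathcal{K}$ preserves associators and braidings strictly.
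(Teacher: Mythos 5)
Your proposal is correct and follows essentially the same route as the paper: the paper explicitly presents the content of Remark~\ref{rmk: funtore C ---> C_T} (that $\mathcal{K}\colon\mC\to\mC_T$ is strict symmetric monoidal and hence preserves all equalities between composites and tensors of structural arrows) as ``the key step'' in proving this proposition, deferring the remaining details to \cite[Prop.~4.4]{FritzGCT23}. Your write-up simply makes explicit the transport of each gs-monoidal axiom along $\mathcal{K}$, including the bookkeeping for the shuffle isomorphisms, which is exactly what the cited argument amounts to.
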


  In other words, $\mathcal{K}:\mC\to \mC_T$ is a (strict) gs-monoidal functor. 
  
  \begin{myexample}\label{exPesRelGS}  The Kleisli category $\mathbf{Set}_{\mathcal{M}}$ of the semiring monad $\mathcal{M}$ introduced in Section~\ref{WR} is the category of sets
	and \textit{weighted relations}, which is a gs-monoidal category. 
	Some instances for different semirings $M$ are presented in Remark~\ref{weightMon}.
  \end{myexample}
  
  We also obtain a further instance of the above result.

\begin{myproposition}\label{prop: Kleisli of affine monads}
    Let $(T,\mu, \eta,c, u)$ be an affine monad on a gs-monoidal category $\mC$. If $\mC$ has projections then so does the Kleisli category $\mC_T$.
	\begin{proof}
If $I$ is terminal and $T$ is affine, then $T(I)\cong I$. Hence, $I$ is terminal also in $\mC_T$. 
    \end{proof}
\end{myproposition}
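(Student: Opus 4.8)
The plan is to reduce the statement to a question about terminal objects and then transfer terminality from $\mC$ to the Kleisli category $\mC_T$. By Proposition~\ref{prop: Kleisli FGTC} the category $\mC_T$ is already gs-monoidal, so by the lemma characterising projections it suffices to show that every arrow of $\mC_T$ is total, and by Remark~\ref{terminal} this is in turn equivalent to showing that the monoidal unit $I$ is terminal in $\mC_T$. Symmetrically, the hypothesis that $\mC$ has projections unpacks, via the same lemma and Remark~\ref{terminal}, into the statement that $I$ is terminal in $\mC$. Thus the whole proposition becomes: \emph{if $I$ is terminal in $\mC$ and $T$ is affine, then $I$ is terminal in $\mC_T$.}

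The decisive intermediate step is to prove that $\eta_I : I \to T(I)$ is an isomorphism, i.e.\ $T(I)\cong I$. First I would instantiate the affine square (\ref{diagram: lax affine}) for the functor $T$ at $X = I$; recalling that $\psi_0 = u = \eta_I$ for a symmetric monoidal monad and that $!_I = \id_I$ in any garbage category, its commutativity reads $\eta_I \circ\, !_{T(I)} = \id_{T(I)}$. For the reverse composite I would use terminality of $I$ in $\mC$: the arrow $!_{T(I)} \circ \eta_I : I \to I$ lives in the singleton hom-set $\mC(I,I)$, hence equals $\id_I$. These two equalities exhibit $\eta_I$ and $!_{T(I)}$ as mutually inverse, giving $T(I)\cong I$.

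Finally I would transfer terminality along the hom-set description of the Kleisli category. Since $\mC_T(X, I) = \mC(X, T(I))$ and $T(I)\cong I$, for every object $X$ we obtain $\mC_T(X, I) \cong \mC(X, I)$, which is a singleton because $I$ is terminal in $\mC$; hence $I$ is terminal in $\mC_T$. Combined with the reduction of the first paragraph, this yields that $\mC_T$ has projections.

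I expect the only delicate point to be the isomorphism $T(I)\cong I$ of the second paragraph: both halves rely on reading the affine condition correctly, one composite coming from diagram (\ref{diagram: lax affine}) at $X=I$ and the other from terminality of $I$ in $\mC$, and it is exactly here that the affineness hypothesis is genuinely used. Everything else is a formal consequence of the Kleisli hom-set identity together with the characterisations of projections already established.
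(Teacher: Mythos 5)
Your proposal is correct and follows exactly the paper's (much terser) argument: reduce to terminality of $I$, establish $T(I)\cong I$ from affineness at $X=I$ together with terminality of $I$ in $\mC$, and transfer along the Kleisli hom-set identity. The details you supply for the isomorphism $T(I)\cong I$ match the computation the paper records in its remark following Definition~\ref{dfn: various monads}.
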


\begin{myexample}
  As shown in Section~\ref{WR}, the functor $\mathcal{M}_u$ is affine, hence the Kleisli category $\mathbf{Set}_{\mathcal{M}_u}$ has projections. 
  For the Boolean semiring, the Kleisli category of $\mathcal{P}_u$ in Remark~\ref{power}
	is the category whose objects are sets and whose arrows are total relations.
%
\end{myexample}

\begin{myremark}
As an instance of Proposition~\ref{prop: Kleisli of affine monads}, we have that if $\mC$ is a cartesian monoidal category and $T$ is affine, then the Kleisli category $\mC_T$ has projections.
Note instead that if $\mC$ has diagonals, then $\mC_T$ does not necessarily have projections. Indeed, the identity functor is an affine monad.
\end{myremark}
\begin{myproposition}\label{prop: Kleisli of relevant monads}
    Let $(T,\mu, \eta,c, u)$ be a relevant monad on a gs-monoidal category $\mC$. If $\mC$ has diagonals then so does the Kleisli category $\mC_T$.
	\begin{proof}
		We need to prove the naturality of $\nabla^\sharp$, i.e. for every arrow $f:X\rightarrow Y$ in $\mC_T$, which corresponds to an arrow $f:X\to T(Y)$ in $\mC$, we show that
				\[\nabla^\sharp_Y\circ^\sharp f = (f\otimes^\sharp f)\circ^\sharp \nabla^\sharp_X \] in $\mC_T$.
				Since the composition of two arrows $f:X\rightarrow Y$ and $g:Y\to Z$ in $\mC_T$ is obtained as $\mu_Z\circ T(g)\circ f$, then we obtain
				\begin{align}
					(f\otimes^\sharp f)\circ^\sharp \nabla^\sharp_X &= \mu_{Y\otimes Y}\circ T(c)\circ T(f\otimes f)\circ\eta_{X\otimes X}\circ\nabla_X \notag\\
					&= \mu_{Y\otimes Y}\circ \eta_{T(Y\otimes Y)}\circ c_{Y,Y}\circ (f\otimes f)\circ\nabla_X \tag{naturality of $\eta$}\\
					&= \mu_{Y\otimes Y}\circ \eta_{T(Y\otimes Y)}\circ c_{Y,Y}\circ \nabla_{TY}\circ f \tag{naturality of $\nabla$}\\
					&= \mu_{Y\otimes Y}\circ \eta_{T(Y\otimes Y)}\circ T(\nabla_Y)\circ f  \tag{relevant monad}\\
					&= \nabla_Y^\sharp \circ^\sharp f. \tag*{\qedhere}
				\end{align}
			\end{proof}
\end{myproposition}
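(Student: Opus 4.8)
The plan is to reduce the claim to a single naturality check. By Proposition~\ref{prop: Kleisli FGTC} the Kleisli category $\mC_T$ is already gs-monoidal, with duplicator $\nabla^\sharp_X = \eta_{X\otimes X}\circ\nabla_X$, and a share category has diagonals precisely when its duplicator is a natural transformation (the remark following Lemma~\ref{rmk:equiv share}). So the only thing left to prove is that $\nabla^\sharp$ is natural, i.e.\ that for every arrow $f\colon X\to Y$ of $\mC_T$ -- equivalently an arrow $f\colon X\to T(Y)$ of $\mC$ -- one has $\nabla^\sharp_Y\circ^\sharp f = (f\otimes^\sharp f)\circ^\sharp\nabla^\sharp_X$ in $\mC_T$.

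First I would expand both sides using the Kleisli composition $g\circ^\sharp f = \mu\circ T(g)\circ f$ and the Kleisli tensor $f\otimes^\sharp g = c\circ(f\otimes g)$ from Remark~\ref{rmk: funtore C ---> C_T}. On the right-hand side this yields $\mu_{Y\otimes Y}\circ T(c_{Y,Y})\circ T(f\otimes f)\circ\eta_{X\otimes X}\circ\nabla_X$, which I would then simplify by applying, in order: naturality of $\eta$, to pull the unit to the front; naturality of $\nabla$ in $\mC$ -- this is exactly the diagonals hypothesis on $\mC$ -- to rewrite $(f\otimes f)\circ\nabla_X$ as $\nabla_{TY}\circ f$; the relevant-functor condition on $T$, namely Diagram~(\ref{diagram: lax relevant}) with $\psi=c$, to collapse $c_{Y,Y}\circ\nabla_{TY}$ into $T(\nabla_Y)$; and finally the monad unit law $\mu\circ\eta_{T}=\id$. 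This leaves $T(\nabla_Y)\circ f$. A shorter computation on the left-hand side, expanding $\nabla^\sharp_Y$ and using the other unit law $\mu\circ T\eta=\id$, gives $\nabla^\sharp_Y\circ^\sharp f = T(\nabla_Y)\circ f$ as well, so the two sides coincide.

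The main obstacle is not conceptual but organisational: the calculation only goes through if the rewrites are applied in the correct order and if one keeps track of which of the two monad unit laws is in play on each side ($\mu\circ T\eta=\id$ on the left, $\mu\circ\eta_T=\id$ on the right). The genuine content sits in a single step -- the relevant condition $c_{Y,Y}\circ\nabla_{TY}=T(\nabla_Y)$ -- which is precisely what allows the external duplication $\nabla_{TY}$ to be internalised as $T(\nabla_Y)$. Relevance of $T$ is exactly the hypothesis that makes naturality of $\nabla^\sharp$ hold, and dropping it breaks the argument at this step.
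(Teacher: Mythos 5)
Your proposal is correct and follows essentially the same route as the paper: reduce to naturality of $\nabla^\sharp$ via Proposition~\ref{prop: Kleisli FGTC}, expand the Kleisli composite, and apply in sequence the naturality of $\eta$, the naturality of $\nabla$ in $\mC$, the relevance condition $c_{Y,Y}\circ\nabla_{TY}=T(\nabla_Y)$, and the monad unit laws. The only (cosmetic) difference is that you simplify both sides down to $T(\nabla_Y)\circ f$ explicitly, while the paper leaves the final unit-law step implicit.
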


\begin{myexample}\label{exPesRel}
	The Kleisli category of the relevant monad $\mathcal{P}_e$ discussed in Remark~\ref{power} is the category of sets
	  and partial functions, which has diagonals. 
	\end{myexample}

\begin{myremark}
As an instance of Proposition~\ref{prop: Kleisli of relevant monads}, we have that if $\mC$ is a cartesian monoidal category and $T$ is relevant, 
then the Kleisli category $\mC_T$ has diagonals.
Note instead that if $\mC$ has projections, then $\mC_T$ does not necessarily have diagonals.
Indeed, the identity functor is a relevant monad.
\end{myremark}

This result has been proved in \cite[Lem.~4.11]{FritzGCT23},
and it is now a consequence of the two propositions given above.

\begin{mycorollary}
    Let $(T,\mu, \eta,c, u)$ be a gs-monoidal monad on a gs-monoidal category $\mC$. If $\mC$ is cartesian monoidal then so is the Kleisli category $\mC_T$. 
\end{mycorollary}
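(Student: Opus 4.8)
The plan is to deduce the statement directly by combining Propositions~\ref{prop: Kleisli of affine monads} and~\ref{prop: Kleisli of relevant monads}, using the decomposition of cartesian monoidality into the existence of diagonals together with the existence of projections. First I would unpack the hypotheses. Since $T$ is a \emph{gs-monoidal} monad, by Definition~\ref{dfn: various monads} the functor $(T,c,u)$ is gs-monoidal, hence by Definition~\ref{def gs monoidal functor} it is simultaneously \emph{affine} and \emph{relevant}. On the side of $\mC$, recall that a gs-monoidal category is cartesian monoidal exactly when both $\nabla_-$ and $!_-$ are natural; equivalently, when $\mC$ has diagonals (naturality of $\nabla_-$) \emph{and} projections (naturality of $!_-$). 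Thus the assumption that $\mC$ is cartesian monoidal splits precisely into the two hypotheses required by the two propositions.

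Next I would apply the two propositions to the single gs-monoidal structure that Proposition~\ref{prop: Kleisli FGTC} places on $\mC_T$, namely the families $\nabla^\sharp_X = \eta_{X\otimes X}\circ\nabla_X$ and $!^\sharp_X = \eta_I\circ!_X$. Applying Proposition~\ref{prop: Kleisli of affine monads}, using that $T$ is affine and that $\mC$ has projections, yields that $\mC_T$ has projections, i.e.\ $!^\sharp_-$ is natural. Applying Proposition~\ref{prop: Kleisli of relevant monads}, using that $T$ is relevant and that $\mC$ has diagonals, yields that $\mC_T$ has diagonals, i.e.\ $\nabla^\sharp_-$ is natural. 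Since both structural families are then natural on the gs-monoidal category $\mC_T$, the characterisation of cartesian monoidality recalled above lets me conclude that $\mC_T$ is cartesian monoidal.

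Because each step merely invokes an already-established result, I do not expect a genuine obstacle. The only point requiring care is a bookkeeping one: I must ensure that the projections-preservation and the diagonals-preservation statements refer to the \emph{same} gs-monoidal structure on $\mC_T$, so that their conclusions can be read off at once. This is guaranteed by Proposition~\ref{prop: Kleisli FGTC}, which equips $\mC_T$ with a canonical gs-monoidal structure whose $\nabla^\sharp$ and $!^\sharp$ are exactly the families whose naturality the two propositions establish. With that observation in place, the two naturality conclusions combine with no further verification, yielding the cartesianness of $\mC_T$.
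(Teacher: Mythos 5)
Your proof is correct and is exactly the argument the paper intends: the corollary is stated as ``a consequence of the two propositions given above,'' i.e.\ one splits cartesian monoidality into diagonals plus projections via Fox's characterisation, applies Propositions~\ref{prop: Kleisli of relevant monads} and~\ref{prop: Kleisli of affine monads} to the canonical gs-monoidal structure on $\mC_T$ from Proposition~\ref{prop: Kleisli FGTC}, and recombines. Your bookkeeping remark about both propositions referring to the same $\nabla^\sharp$ and $!^\sharp$ is the right point to check, and it holds.
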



\begin{myexample}
\label{CSMonad}
	Let $(\mathcal{C}, \otimes, I)$ be a gs-monoidal category and $M\in\mathcal{C}$ equipped with a monoid structure. 
	Then the \emph{action monad} $(-)\otimes M:\mathcal{C}\to \mathcal{C}$ is a symmetric 
	monoidal monad with $u = \cobang_M$ and
	$c_{X,Y}:(X\otimes M) \otimes (Y\otimes M)\to (X\otimes Y)\otimes M$
	given by 
	\ctikzfig{ex3_19}
If $M$ is connected (see Definition \ref{dfn: specal and connected bimonoid}),  then the monad is affine. If $M$ is special
(see again Definition \ref{dfn: specal and connected bimonoid}), then the monad is relevant.
\end{myexample}


We close by dualising Proposition~\ref{prop: Kleisli FGTC} and its consequences.

\begin{myproposition}\label{prop:Kleisli cogs}
	Let $(T,\mu, \eta,c, u)$ be a symmetric monoidal monad on a cogs-monoidal category $\mC$. Then the Kleisli category $\mC_T$ is a cogs-monoidal category with $\Delta_X$ and 
	$\cobang_X$ given for every object $X$ by 
\[\Delta^\sharp_X:= \eta_{X} \circ \Delta_X,\qquad \cobang^\sharp_X:=\eta_X \circ \cobang_X\]
\end{myproposition}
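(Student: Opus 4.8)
The plan is to dualize the proof of Proposition~\ref{prop: Kleisli FGTC} by observing that the proposed structure maps are exactly the images of the cogs-monoidal structure of $\mC$ under the canonical functor $\mathcal{K}\colon\mC\to\mC_T$, and that this functor transports the relevant equational axioms from $\mC$ to $\mC_T$. First I would record that $\Delta^\sharp_X=\eta_X\circ\Delta_X=\mathcal{K}(\Delta_X)$ and $\cobang^\sharp_X=\eta_X\circ\cobang_X=\mathcal{K}(\cobang_X)$, since by Remark~\ref{rmk: funtore C ---> C_T} the functor $\mathcal{K}$ is the identity on objects and acts on arrows by post-composition with $\eta$. Thus the two families in the statement are precisely the $\mathcal{K}$-images of the monoid structure maps $\Delta_X\colon X\otimes X\to X$ and $\cobang_X\colon I\to X$ that witness the cogs-monoidal structure of $\mC$.

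Next I would invoke Remark~\ref{rmk: funtore C ---> C_T} in full: $\mathcal{K}$ is a strict symmetric monoidal functor, so it carries the associator, unitors, and symmetry of $\mC$ to those of $\mC_T$, and being a functor it preserves any equality $f=g$ whose two sides are built from arrows of $\mC$ by composition and monoidal product. By definition $\mC$ is cogs-monoidal exactly when $\mC^{op}$ is gs-monoidal; unfolding this says that in $\mC$ the arrows $\Delta_X,\cobang_X$ equip each object with a commutative monoid structure that is multiplicative with respect to $\otimes$. All of these requirements---associativity, unitality, commutativity, and the multiplicativity laws relating $\Delta_{X\otimes Y},\cobang_{X\otimes Y}$ to $\Delta_X,\Delta_Y,\cobang_X,\cobang_Y$---are equalities between composites of $\Delta$, $\cobang$, and the coherence isomorphisms. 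Applying $\mathcal{K}$ to each such equality yields the corresponding equality in $\mC_T$ for $\Delta^\sharp=\mathcal{K}(\Delta)$ and $\cobang^\sharp=\mathcal{K}(\cobang)$, now with Kleisli composition $\circ^\sharp$ and tensor $\otimes^\sharp$ in place of $\circ$ and $\otimes$. Hence $(\mC_T,\otimes^\sharp,I)$ is cogs-monoidal and, in fact, $\mathcal{K}$ is a strict cogs-monoidal functor, dually to Proposition~\ref{prop: Kleisli FGTC}.

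I expect no genuine obstacle here, only a point of care: one must check that every cogs-monoidal axiom is phrased purely through composites and tensors of structural arrows, so that Remark~\ref{rmk: funtore C ---> C_T} applies uniformly. The coherence isomorphisms that appear in the unitality and multiplicativity laws are handled precisely because $\mathcal{K}$ is \emph{strict} symmetric monoidal and therefore sends them to the coherence isomorphisms of $\mC_T$. An alternative, equivalent route would unfold the definition of cogs-monoidality directly and verify that $(\mC_T)^{op}$ is gs-monoidal; since $(\mC_T)^{op}$ is the co-Kleisli category of the comonad $T^{op}$ on the gs-monoidal category $\mC^{op}$, this amounts to the comonadic dual of Proposition~\ref{prop: Kleisli FGTC}, whose proof is again the transfer argument above. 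I would favour the direct argument, as it avoids dualizing the entire Kleisli construction and reuses verbatim the reasoning behind Proposition~\ref{prop: Kleisli FGTC}.
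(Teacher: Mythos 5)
Your proposal is correct and follows essentially the same route as the paper, which proves this proposition by dualising Proposition~\ref{prop: Kleisli FGTC}: the structure maps are the images $\mathcal{K}(\Delta_X)$ and $\mathcal{K}(\cobang_X)$ under the strict symmetric monoidal functor $\mathcal{K}\colon\mC\to\mC_T$ of Remark~\ref{rmk: funtore C ---> C_T}, which preserves all equalities between composites and tensors of structural arrows and hence transports the monoid and multiplicativity axioms to $\mC_T$. Your preference for the direct transfer argument over formally passing through the co-Kleisli category of $T^{op}$ matches the paper's intent.
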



\begin{mycorollary}\label{cor: kleisli bigs}
	Let $(T,\mu, \eta,c, u)$ be a symmetric monoidal monad on a bigs-monoidal (or bialgebraic or Frobenius) category $\mC$. Then so is the Kleisli category $\mC_T$.
\end{mycorollary}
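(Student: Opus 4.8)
The plan is to reduce everything to the two preceding propositions together with the strict symmetric monoidal functor $\mathcal{K}\colon \mC \to \mC_T$ of Remark~\ref{rmk: funtore C ---> C_T}. First, if $\mC$ is bigs-monoidal, then by definition it is simultaneously gs-monoidal and cogs-monoidal; applying Proposition~\ref{prop: Kleisli FGTC} and Proposition~\ref{prop:Kleisli cogs} equips $\mC_T$ with both a gs-monoidal and a cogs-monoidal structure, whence $\mC_T$ is bigs-monoidal. The crucial observation for the two remaining cases is that the structural arrows of $\mC_T$ are exactly the $\mathcal{K}$-images of those of $\mC$, namely $\nabla^\sharp_X = \mathcal{K}(\nabla_X)$, $!^\sharp_X = \mathcal{K}(!_X)$, $\Delta^\sharp_X = \mathcal{K}(\Delta_X)$ and $\cobang^\sharp_X = \mathcal{K}(\cobang_X)$, while the associators, unitors and symmetries of $\mC_T$ are likewise $\mathcal{K}$-images, since $\mathcal{K}$ is strict symmetric monoidal.

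Next I would treat the bialgebraic and Frobenius cases uniformly. Each of the additional defining equations -- the two bialgebra laws and the Frobenius law -- is an identity $p = q$ in $\mC$ between arrows $p$ and $q$ obtained from the structural data $\nabla$, $!$, $\Delta$, $\cobang$ (together with the coherence isomorphisms) purely by composition and monoidal product. By Remark~\ref{rmk: funtore C ---> C_T}, the functor $\mathcal{K}$ preserves precisely such equalities. Since $\mathcal{K}$ is a functor (hence preserves composition), is the identity on objects, and is strict symmetric monoidal (so that $\mathcal{K}(f)\otimes^\sharp \mathcal{K}(g) = \mathcal{K}(f\otimes g)$), applying $\mathcal{K}$ to both sides of each such identity turns $p = q$ into the corresponding identity $p^\sharp = q^\sharp$ in $\mC_T$, now expressed through $\nabla^\sharp$, $!^\sharp$, $\Delta^\sharp$, $\cobang^\sharp$. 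Thus every bialgebra, respectively Frobenius, axiom holding in $\mC$ also holds in $\mC_T$, and the claim follows in all three cases.

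The only delicate point is the bookkeeping ensuring that Remark~\ref{rmk: funtore C ---> C_T} genuinely applies: one must check that no axiom secretly involves an arrow outside the structural fragment, and that the symmetries and associators appearing in the string-diagrammatic form of the laws are transported correctly by the strict monoidal structure of $\mathcal{K}$. This is immediate here, since all the laws defining bialgebraic and Frobenius categories are written entirely in the monoidal language of $\nabla$, $!$, $\Delta$, $\cobang$ and the coherence isomorphisms; hence no real obstacle arises, and the argument amounts to a direct transport of equations along $\mathcal{K}$.
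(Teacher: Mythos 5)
Your proof is correct and follows exactly the route the paper intends: the bigs-monoidal case is the conjunction of Propositions~\ref{prop: Kleisli FGTC} and~\ref{prop:Kleisli cogs}, and the bialgebraic and Frobenius laws transfer because the identity-on-objects strict symmetric monoidal functor $\mathcal{K}\colon\mC\to\mC_T$ sends the structural arrows of $\mC$ to those of $\mC_T$ and preserves equalities built from them by composition and tensor, as in Remark~\ref{rmk: funtore C ---> C_T}. The paper leaves this corollary without an explicit proof, but its later corollary on bicategories of bialgebras and of relations is justified by precisely the same one-line appeal to that remark, so your argument matches the intended one.
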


\begin{myremark}
As noted, a cogs-monoidal category is the dualisation of a gs-monoidal category. The same dualisation can be exploited by introducing coprojections and codiagonals, as well as
coaffine and corelevant functors.
The results  about Kleisli categories in this section can be simply restated for these notions. We leave them as well as their enriched versions
in Appendix \ref{section: appendix dual results}, which also describes the dual of the results 
obtained in forthcoming Section \ref{subsection:enriched taxonomy}.
\end{myremark}

\section{A taxonomy of the enriched context}\label{sec:oplax_cart}
\label{sec:oplax cartesian categories}

The 2-categorical formalism 
has been useful in the context of term and graph rewriting, where rewriting sequences are modelled as 2-cells (for early takes see e.g. \cite{GadducciHL99} and the references therein). 
In this work we consider 
preorder-enriched categories,
which are well-suited in categories for relations, since they 
have a canonical ordering given by set-theoretic inclusion
(while this enrichment becomes trivial if we consider functions instead of relations). 
In other words, functions are inherently one-dimensional, while relations provide a more complex view, which can be described through a richer categorical structure.
We find two relevant families of 2-categories that abstract the poset-enriched structure of $\Rel$: Cartesian bicategories \cite{CARBONI198711,cartesianbicatII} 
by Carboni and Walters and allegories \cite{freyd1990categories} by Freyd and Scedrov.
\[
\begin{tikzcd}[column sep= tiny]
	& {\text{oplax cartesian }} \\
	& {\text{oplax bicartesian} } \\
	& {\text{cartesian bicategories}} \\
	{\text{bicategories of relations}} && {\text{bicategories of bialgebras}} \\
	&  \mathbf{1}
	\arrow[from=2-2, to=1-2]
	\arrow[from=3-2, to=2-2]
	\arrow[from=4-1, to=3-2]
	\arrow[from=4-3, to=3-2]
	\arrow[from=5-2, to=4-1]
	\arrow[from=5-2, to=4-3]
\end{tikzcd}\]
Hence, in the enriched context, the notion of gs-monoidal category is replaced by the notion of \textit{preorder-enriched gs-monoidal category}. 
Requiring the oplax naturality of duplicator $\nabla_X$ and of discharger $!_X$, one obtains the notion of \textit{oplax cartesian category}. 
%
%
If one requires both a comonoid and monoid structure on the objects and their oplax naturality, one obtains the notion of \textit{oplax bicartesian category}. 
Cartesian bicategories can be then presented as oplax bicartesian categories in which the enrichment is posetal and satisfies three additional conditions.
%
Special cases of cartesian bicategories are bicategories of relations (those satisfying \textit{Frobenius law}) and bicategories of bialgebras. Note that these two structures are incompatible: 
a bicategory of relations  that is also a bicategory of bialgebras has to be the trivial one.

\subsection{Order-enriched categories}
We have seen that gs-monoidal categories enjoy some features of $\Rel$ with respect to total and functional arrows. 
Our next step is to recall how to build on the notion of gs-monoidality to account for the usual poset-enrichment of $\Rel$. Recall that a \textit{preorder} is a set $X$ equipped with a reflexive and transitive relation $\leq$. A \textit{poset} is a preorder such that $\forall x,y\in X\ (x\le y \wedge y\le x \Rightarrow x=y)$. The category of preorders and monotone functions is denoted by $\mathbf{PreOrd}$, while the category of posets and monotone functions is denoted by $\mathbf{PO}$. The inclusion $\mathbf{PO}\to \mathbf{PreOrd}$ has a left adjoint, called \textit{poset-reflection}, which maps a preorder to the corresponding poset obtained by quotienting the equivalence relation $x\sim y$ if $x\le y$ and $y\le x$.

\begin{mydefinition}\label{def poset-enriched gs-monoidal category}
	A \textbf{preorder-enriched gs-monoidal category} $\mC$ is a gs-monoidal category $\mC$ that is also a preorder-enriched monoidal category.
\end{mydefinition}

In the following we will often consider also poset-enriched categories. 
Recall that a preorder-enriched monoidal category consists of a preorder-enriched category $\mC$, an object $I$ of $\mC$, 
a preorder-enriched functor $\freccia{\mC\times \mC}{\otimes}{\mC}$,  and enriched monoidal natural isomorphisms
\[
	\freccia{I\otimes -}{\lambda}{\id_{\mC}}, \qquad \freccia{- {}\otimes{} I}{\rho}{\id_{\mC}}, \qquad \freccia{(-\otimes -)\otimes -}{\alpha}{-\otimes (-\otimes -)}
\]
such that the underlying category equipped with the underlying functor $\otimes$, the object $I$, and the natural isomorphisms $\lambda$, $\rho$, and $\alpha$ is a monoidal category (see \cite{Kelly05} for details).
Since a preorder-enriched functor is just an ordinary functor that is in addition monotone, the preorder structure and the monoidal structure are required to interact by the monotonicity of the tensor product $\otimes$; the preorder-enrichment of the structure isomorphisms $\lambda$, $\rho$, and $\alpha$ does not add any additional condition since preorder-enrichment for natural transformations between preorder-enriched functors is trivial.
\begin{myexample}\label{example: span is poset-enriched}
Let us consider the category of spans  $\mathbf{Span}(\mA)$  presented in Example~\ref{ex_spans_are_gs}. It is well-known that this category has a natural 2-categorical structure, where the 2-cells are defined as follows
\begin{itemize}
\item a 2-cell $\alpha: (X \leftarrow A \to Y) \Rightarrow (X \leftarrow  B \to Y)$ is an arrow $\alpha: A \to B$ in ${\mA}$  such that the following diagram commutes
		\[
			\begin{tikzcd}[row sep=2pt, column sep=8pt]
				& A \ar[dl, bend right] \ar[rd, bend left]\ar[dd, "\alpha"] & \\
				X  & & Y  \\
				& B \ar[ul, bend left] \ar[ur,  bend right]
			\end{tikzcd}
		\]
	\item vertical composition of 2-cells is given by composition in $\mA$;
	\item horizontal composition of 2-cells as well as associators and unitors are induced by the universal property of pullbacks.
	 \end{itemize}
We will refer to $\mathbf{PSpan}(\mathcal{A})$
as the preorder-enriched category obtained by considering the preorder-reflection of the previous 2-categorical structure,
and to $(\mathbf{PSpan}(\mathcal{A}),\times,1)$ and $(\mathbf{PSpan}(\mathcal{A}),+,0)$ as its gs-monoidal counterparts.
Given the correspondence in Remark~\ref{spas as rel}, for a regular category $\mA$ in which every regular epi splits the poset-reflection of the previous 2-category boils down 
to what we call $\mathbf{PRel}(\mathcal{A})$, i.e. the usual poset-enrichment of $\mathbf{Rel}(\mathcal{A})$.
\end{myexample}

\begin{myexample}\label{SetM_preordered}
	Recall that a semiring $(M,\oplus,\odot,0,1)$  comes equipped with a canonical preorder $\leq_M$, namely $a \leq_M b$ if there exists $c$ such that $a \oplus c = b$,
	so the Kleisli category of the semiring monad $\mathcal{M}$ introduced in Example \ref{example: monad semiring} is 
	canonically a preorder-enriched gs-monoidal category. It is gs-monoidal thanks to Proposition \ref{prop: Kleisli FGTC} and is preorder-enriched,
	assuming that
	for two arrows $f,g:X \to Y$ in $\mathbf{Set}_{\mathcal{M}}$,  $f\le g$ if $\forall x\in X\forall y\in Y\  f(x)(y)\le_M g(x)(y)$.
	
	Note that the same construction occurs if we have an ordered semiring $(M,\oplus,\odot,0,1)$, i.e. a semiring such that $(M, \leq)$ is a preorder
	and $+$ and $\cdot$ are functions that are order-preserving on both arguments.
\end{myexample}

\begin{myexample}
\label{preord}
The category $\mathbf{PreOrd}$ is cartesian monoidal, and the gs-monoidal structure is inherited from the direct product in $\mathbf{Set}$.
It also comes equipped with a canonical preorder-enrichment, assuming that
for two arrows $f,g: X \to Y$ in $\mathbf{PreOrd}$ we have that $f\le g$ if $\forall x\in X\  f(x)\le g(x)$ in $Y$.
\end{myexample}


In a general preorder-enriched gs-monoidal category, no further compatibility with a gs-monoidal structure is required.
However, most often additional axioms hold.
We recall the notion of \emph{oplax cartesian category} \cite[Def.~3.2]{FritzGCT23}. 

\begin{mydefinition}\label{def oplax cartesian cat}
An \textbf{oplax cartesian category} $\mC$ is a preorder-enriched gs-monoidal category $\mC$ such that every arrow is \textbf{oplax copyable} and \textbf{oplax discardable}, 
i.e. the following inequalities hold for every arrow $\freccia{X}{f}{Y}$

	\ctikzfig{oplax_cart_def}

\end{mydefinition}
The notion of \emph{oplax cocartesian category} can be given as expected, starting from a cogs-monoidal category and
preserving the direction of inequalities. Similarly,
one gets the notion of \emph{oplax bicartesian category}
if both previous notions hold.
%
In the following, we premise the adjective \emph{posetal} if the underlying order is a poset.

\begin{myexample}\label{example: pspan e' oplax}
	The category $(\mathbf{PSpan}(\mathcal{A}), \times, 1)$ that was introduced in Example \ref{example: span is poset-enriched} 
	is oplax cartesian~\cite[Prop. 5.3]{FritzGCT23},
	while $(\mathbf{PRel}(\mathcal{A}), \times, 1)$ is poset-enriched. In fact, both are oplax bicartesian.
	As for $\mathbf{PreOrd}$ with the direct product, since it is cartesian monoidal, it is also
	oplax cartesian.
\end{myexample}

\begin{myremark}
	As it occurs for gs-monoidal categories, also the notion of oplax cartesian category could be split in two parts,
	that is, we could consider separately the arrows that are \emph{oplax copyable} 
	and those that are \emph{oplax discardable}, as in the previous sections.
	For the sake of readability, in this case we preferred to keep these two notions together, so that we are leaving implicit 
	the obvious partition and generalisation of the forthcoming results, such as Proposition~\ref{prop: Kleisli colax cartesian}.
\end{myremark}

\begin{myexample}
\label{sub}
	Let ${\mathcal{M}}: \mathbf{Set} \to \mathbf{Set}$ be the functor introduced in Section~\ref{WR} for semiring $(M,\oplus,\odot,0,1)$  equipped with a preorder such as in Example \ref{SetM_preordered}. 
	Consider the following slight modifications of $\mathcal{M}$
	\[\mathcal{M}_e^s(X)=\left\{ h:X \to M \ |\  h\  \text{has support at most one and is sub-idempotent}\right\}\]
	\[\mathcal{M}_u^s(X)=\left\{ h:X \to M \ |\  h\  \text{has finite support and is sub-normalised}\right\}\]
	where sub-idempotent means that $\forall x \in X.\, h(x)\le h(x) \odot h(x)$ and sub-normalised that $\bigoplus_{x \in X}h(x) \le 1$.
	In the Kleisli category $\mathbf{Set}_{{M}_e^s}$ every arrow is oplax copyable, while in 
	$\mathbf{Set}_{{M}_u^s}$ every arrow is oplax discardable.	
	\end{myexample}

\newpage
\begin{myremark}
\label{laxbialg}
Note that in every bigs-monoidal category that is either oplax cartesian or oplax cocartesian the following inequalities hold

\ctikzfig{cipr1}
\ctikzfig{cipr3}

In other terms, each object has a \emph{lax} (bicommutative) bimonoid structure or, 
equivalently, such categories have a \emph{lax} bialgebraic structure.
\end{myremark}

In the following proposition we observe that the notion of oplax cartesian category may subsume that of restriction category.
\begin{myproposition}\label{oplax as restriction}
Let $\mC$ be a posetal oplax cartesian category such that the following inequality holds
	for every pair of arrows $\freccia{X}{f}{Y}$ and $\freccia{Y}{g}{W}$
\ctikzfig{condition_oplax_is_restriction}
Then $\mC$ is a restriction category.
\end{myproposition}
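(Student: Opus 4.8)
The plan is to equip $\mathcal{C}$ with the restriction structure that is already available in any gs-monoidal category, namely $\overline{f} \coloneqq \mathrm{dom}(f)$, where $\mathrm{dom}(f)$ is the domain arrow recalled in the proof of Proposition~\ref{gs as restriction} (copy $X$ with $\nabla_X$, send one copy through $f$ and discard it with $!_Y$, and keep the other copy). As observed there, axioms (R.2) and (R.3) hold in \emph{every} gs-monoidal category, since they merely encode the (co)associativity and (co)commutativity of the comonoid together with the counit law; I would simply invoke that computation. Hence the whole task reduces to establishing (R.1) and (R.4), which in the cartesian case relied on the naturality of $\nabla$ and which here must be recovered from oplax copyability, oplax discardability, posetality, and the extra hypothesis.

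For (R.1) the first step is to observe that $\overline{f} \leq \mathrm{id}_X$: writing $\mathrm{dom}(f)$ as $\mathrm{id}_X \otimes (!_Y \circ f)$ fed into $\nabla_X$ and contracted by the counit, oplax discardability ($!_Y \circ f \leq \,!_X$) together with monotonicity of $\otimes$ and $\circ$ yields $\overline{f} \leq \rho_X \circ (\mathrm{id}_X \otimes \,!_X) \circ \nabla_X = \mathrm{id}_X$, so that $f \circ \overline{f} \leq f$. For the reverse inequality I would rewrite $f \circ \overline{f}$ as $\rho_Y \circ (\mathrm{id}_Y \otimes \,!_Y) \circ (f \otimes f) \circ \nabla_X$ (pulling $f$ across the unitor by naturality of $\rho$) and then apply oplax copyability in the form $\nabla_Y \circ f \leq (f \otimes f) \circ \nabla_X$; since $\rho_Y \circ (\mathrm{id}_Y \otimes \,!_Y) \circ \nabla_Y = \mathrm{id}_Y$, this gives $f \leq f \circ \overline{f}$. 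Posetality (antisymmetry of the order) then forces $f \circ \overline{f} = f$, i.e.\ (R.1).

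For (R.4), with $f\colon X \to Y$ and $g\colon Y \to W$, I would expand both $\overline{g} \circ f$ and $f \circ \overline{g \circ f}$ as parallel copy–discard diagrams, the only difference being whether the duplication occurs on $X$ (as $(f \otimes f) \circ \nabla_X$) or on $Y$ (as $\nabla_Y \circ f$). Oplax copyability immediately supplies $\overline{g} \circ f \leq f \circ \overline{g \circ f}$, while the extra hypothesis of the statement is tailored to give the opposite inequality: post-composing it with the discarding map $\rho_Y \circ (\mathrm{id}_Y \otimes \,!_W)$ collapses it to $f \circ \overline{g \circ f} \leq \overline{g} \circ f$, and posetality again upgrades the two inequalities to the equality (R.4). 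The main obstacle is precisely this last matching: one must check that, after composing the hypothesised inequality with the appropriate unitors and dischargers and doing the coherence bookkeeping (naturality of $\rho$ and the interchange law for $\otimes$), it lands exactly on $f \circ \overline{g \circ f}$ on one side and on $\overline{g} \circ f$ on the other, the remainder being routine monotonicity. Throughout, posetality is what converts the native lax inequalities of an oplax cartesian category into the strict equalities demanded by the restriction axioms.
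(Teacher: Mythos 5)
Your proposal is correct and follows essentially the same route as the paper: the restriction structure is $\overline{f}\coloneqq\mathrm{dom}(f)$, axioms (R.2)--(R.3) come for free from gs-monoidality, (R.1) from oplax copyability/discardability together with posetality, and (R.4) from oplax copyability in one direction and the additional hypothesis (postcomposed with the discharger) in the other. The only difference is that you derive (R.1) inline, whereas the paper defers it to \cite[Prop.~3.6]{FritzGCT23}; your two inequalities $\overline{f}\le\id_X$ and $f\le f\circ\overline{f}$ are exactly the standard argument and are correct.
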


\begin{proof}
We already noted in the proof of Proposition~\ref{gs as restriction}
that the axioms ($R.2$) and ($R.3$) of Definition~\ref{dfn: restriction category completa} hold for any gs-monoidal category when we consider the restriction structure given by the domain of an arrow
	\ctikzfig{domain}
Now, the proof that ($R.1$) holds follows from \cite[Prop.~3.6]{FritzGCT23}.
As for  ($R.4$), it suffices to use the first axiom of oplax cartesian categories and the additional requirement for $\mC$.
\end{proof}

\begin{myremark}
The previous proposition shows that the notions of posetal oplax cartesian category and restriction category are very closely related, since the former satisfies the first three requirements 
($R.1$)-($R.3$) of the latter,
and the last condition ($R.4$) is recovered by the additional inequality in Proposition~\ref{oplax as restriction}.
Assuming such inequality, the restriction structure induced by an oplax cartesian category satisfies all the necessary conditions 
for having restriction products in the sense of \cite[p. 20]{Cockett07}, except for an inequality of the final condition, namely
\ctikzfig{rest_products}
It is a simple check, and indeed a sanity check, that in posetal oplax cartesian categories the above condition corresponds via ($R.1$) and coassociativity to $(f\otimes f)\circ \nabla_X\le \nabla_Y\circ f$, hence to the naturality of duplicators.
\end{myremark}

We close the thread with a result concerning positivity, showing that the positivity axiom is stronger than the axiom (R4) for restriction categories.

\begin{mycorollary}
Let $\mC$ be a posetal oplax cartesian category.
If it is positive, then it is a restriction category.
\end{mycorollary}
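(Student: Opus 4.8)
The plan is to derive the statement from Proposition~\ref{oplax as restriction}: since $\mC$ is posetal oplax cartesian, it already satisfies (R.1)--(R.3) for the restriction structure $\overline{(-)}=\mathrm{dom}(-)$ (as recalled in the proofs of Propositions~\ref{gs as restriction} and~\ref{oplax as restriction}, with (R.1) coming from \cite[Prop.~3.6]{FritzGCT23}). Hence it is enough to check the extra inequality hypothesised in Proposition~\ref{oplax as restriction}, i.e. the direction of (R.4) not already forced by oplax copyability: explicitly $\overline{g\circ f};f \le f;\overline{g}$ for every $f\colon X\to Y$ and $g\colon Y\to W$, where $\overline{g}=\nabla_Y;(\id_Y\otimes(g;!_W))$ and $\overline{g\circ f}=\nabla_X;(\id_X\otimes(f;g;!_W))$. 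Once this inequality is established, Proposition~\ref{oplax as restriction} delivers (R.4) and $\mC$ is a restriction category.

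The crux is to extract this inequality from positivity, and the obstacle is that positivity (Definition~\ref{positive}) is a \emph{conditional} axiom: it constrains a pair $f,g$ only when the composite $g;f$ is functional, whereas the pairs occurring above are arbitrary. The observation that dissolves this is that the required inequality involves $g$ only through the garbage-decorated arrow $g;!_W\colon Y\to I$, and \emph{every} arrow $h\colon X\to I$ is functional. Indeed, copying $h$ and applying it to both legs gives $\nabla_X;(h\otimes h)=\mathrm{dom}(h);h$, whereas $h;\nabla_I=h$ because $\nabla_I=\id_I$; so functionality of $h$ amounts to $\mathrm{dom}(h);h=h$, which is exactly axiom (R.1) for $h$, and (R.1) holds in $\mC$ by \cite[Prop.~3.6]{FritzGCT23}. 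Thus the functionality hypothesis of positivity is automatically satisfied for any composite landing in $I$.

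I would then apply positivity to the pair $(f,\,g;!_W)$. Its composite $f;g;!_W\colon X\to I$ lands in $I$, hence is functional, so positivity is applicable and its equation reads $f;\nabla_Y;(\id_Y\otimes(g;!_W))=\nabla_X;(f\otimes(f;g;!_W))$. The left-hand side is $f;\overline{g}$ and the right-hand side is $\overline{g\circ f};f$, so positivity in fact yields the full equality $f;\overline{g}=\overline{g\circ f};f$, i.e. axiom (R.4) itself — consistent with the preceding remark that positivity is stronger than (R.4). A fortiori the inequality hypothesised in Proposition~\ref{oplax as restriction} holds, and the corollary follows. I expect the only genuine subtlety to be the reduction of the previous paragraph, namely realising that the discharger $!_W$ makes the relevant composite land in $I$ and therefore automatically functional, which is precisely what discharges the side condition in the positivity axiom.
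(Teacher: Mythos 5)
Your proposal is correct and follows essentially the same route as the paper: apply positivity to the pair $(f,\,!_W\circ g)$, whose composite lands in $I$ and is therefore automatically functional, and conclude that the extra inequality of Proposition~\ref{oplax as restriction} (indeed the full equality (R.4)) holds. The only difference is that you spell out why every arrow into $I$ is functional (via $\nabla_I=\id_I$ and axiom (R.1) from \cite[Prop.~3.6]{FritzGCT23}), a step the paper dismisses as ``easy to note''.
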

\begin{proof}
 Recall that by Definition~\ref{positive} a category is positive
    if for every pair of arrows $f:X\to Y$ and $g: Y \to W$ such that 
    $g \circ f$ is functional then
\ctikzfig{positivity}
holds. Now, it is easy to note that in a posetal oplax cartesian category
for any arrow $h: X \to W$ we have that $!_W \circ h$ is functional. In particular, we have that $!_W\circ g \circ f$ is functional, and hence, by positivity, we have that the inequality of Proposition~\ref{oplax as restriction}
 \ctikzfig{condition_oplax_is_restriction}
is satisfied, and hence ($R.4$) follows.
\end{proof}



On functors between oplax cartesian categories, one often has additional inequalities, which take the following form.

	\begin{mydefinition}\label{def:(op) oplax cartesian functor}
		Let $\mC$ and $\mD$
		be preorder-enriched gs-monoidal categories and  
		$\freccia{\mC}{F}{\mD}$ a preorder-enriched lax symmetric monoidal functor with structure arrows $\psi, \psi_0$.
		Then $F$ is called 
			\textbf{colax affine} if the following inequality holds 
\[
\begin{tikzcd}
	{F(X)} && {F(I)} \\
	& I
	\arrow[""{name=0, anchor=center, inner sep=0}, "{F(!_X)}", from=1-1, to=1-3]
	\arrow["{!_{F(X)}}"', from=1-1, to=2-2]
	\arrow["{\psi_0}"', from=2-2, to=1-3]
	\arrow["\le"{anchor=center, rotate=-90}, draw=none, from=2-2, to=0]
\end{tikzcd}
\]
	and it is called \textbf{colax relevant}  if the following inequality holds
	\[
\begin{tikzcd}
	{F(X)} && {F(X\otimes X)} \\
	& {F(X)\otimes F(X)}
	\arrow[""{name=0, anchor=center, inner sep=0}, "{F(\nabla_X)}", from=1-1, to=1-3]
	\arrow["{\nabla_{F(X)}}"', from=1-1, to=2-2]
	\arrow["{\psi_{X,X}}"', from=2-2, to=1-3]
	\arrow["\le"{description,rotate=-90}, draw=none, from=2-2, to=0]
\end{tikzcd}\]
	If $F$ is both colax affine and colax relevant it is called \textbf{colax gs-monoidal}\footnote{The use of ``colax''  refers to the direction of the 2-cell, namely from $F(\nabla_A)$ to $\psi_{A,A}\circ \nabla_{FA}$. Note also that in \cite{FritzGCT23} colax gs-monoidal functors were called colax cartesian.}.
			
	\end{mydefinition}

\begin{myexample}
	Let $\mC$ be a locally small oplax cartesian category. Then, for every object $A$ of $\mC$, the representable functor $\mC(A,-):\mC\to \mathbf{PreOrd}$ has a canonical colax gs-monoidal structure given by 
	\[\psi_{X,Y}: \mC(A,X)\times \mC(A, Y)\to \mC(A,X\otimes Y)\]
	sending $f:A\to X$ and $g:A\to Y$ to $(f\otimes g)\circ \nabla_A$, and 
	\[\psi_0:I\to \mC(A,I)\]
	sending the unique element of $I$ to $!_A$. We refer to \cite[Thm. 6.3]{FritzGCT23} for details.
\end{myexample}

\subsection{An enriched taxonomy of Kleisli categories}\label{subsection:enriched taxonomy}
	\begin{mydefinition}
A symmetric monoidal monad $(T,\mu,\eta, c,u)$ on a preorder-enriched gs-monoidal category is said to be a \textbf{colax gs-monoidal monad} if $T$ is a 
colax gs-monoidal functor.
\end{mydefinition}
\begin{myproposition}\label{prop: Kleisli colax cartesian}
	Let $(T,\mu, \eta,c, u)$ be a colax gs-monoidal monad on a preorder-enriched gs-monoidal category $\mC$. 
	If $\mC$ is oplax cartesian then so is the Kleisli category $\mC_T$.
\end{myproposition}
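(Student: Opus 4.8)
The plan is to build on Proposition~\ref{prop: Kleisli FGTC}, which already guarantees that $\mC_T$ is gs-monoidal with $\nabla^\sharp_X = \eta_{X\otimes X}\circ\nabla_X$ and $!^\sharp_X = \eta_I\circ !_X$; so it remains to equip $\mC_T$ with a compatible preorder-enrichment and to verify that every Kleisli arrow is oplax copyable and oplax discardable. For the enrichment I would take the hom-preorders of $\mC_T$ to be those of $\mC$ under the identification $\mC_T(X,Y)=\mC(X,T(Y))$. Monotonicity of Kleisli composition $g\circ^\sharp f=\mu\circ T(g)\circ f$ then follows from monotonicity of composition in $\mC$ together with the fact that $T$, being a colax gs-monoidal (in particular preorder-enriched) functor, is monotone on hom-preorders; monotonicity of $\otimes^\sharp$ is analogous. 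The coherence isomorphisms of $\mC_T$ are images of those of $\mC$ under the strict monoidal functor $\mathcal{K}$, and their enriched naturality is automatic, so $\mC_T$ is a preorder-enriched gs-monoidal category.

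Next I would fix an arbitrary Kleisli arrow $f\colon X\to Y$, i.e.\ an arrow $f\colon X\to T(Y)$ of $\mC$, and reduce each of the two oplax inequalities to an inequality in $\mC$, mimicking the computation in the proof of Proposition~\ref{prop: Kleisli of relevant monads}. For copyability, unfolding the definitions and using the naturality of $\eta$ together with the monad unit laws gives
\[\nabla^\sharp_Y\circ^\sharp f = T(\nabla_Y)\circ f,\qquad (f\otimes^\sharp f)\circ^\sharp \nabla^\sharp_X = c_{Y,Y}\circ(f\otimes f)\circ\nabla_X,\]
so the required inequality $\nabla^\sharp_Y\circ^\sharp f\le (f\otimes^\sharp f)\circ^\sharp\nabla^\sharp_X$ becomes $T(\nabla_Y)\circ f\le c_{Y,Y}\circ(f\otimes f)\circ\nabla_X$ in $\mC$. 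The same manipulation for the discharger yields $!^\sharp_Y\circ^\sharp f = T(!_Y)\circ f$ and $!^\sharp_X=\eta_I\circ !_X$, so discardability amounts to $T(!_Y)\circ f\le \eta_I\circ !_X$, where I use $u=\eta_I$.

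Each of these two $\mC$-inequalities I would then obtain by splicing together the colax structure of $T$ with the oplax structure of $\mC$. For copyability, colax relevance of $T$ gives $T(\nabla_Y)\le c_{Y,Y}\circ\nabla_{T(Y)}$, while oplax copyability of the $\mC$-arrow $f\colon X\to T(Y)$ gives $\nabla_{T(Y)}\circ f\le (f\otimes f)\circ\nabla_X$; pre-composing the first with $f$, post-composing the second with $c_{Y,Y}$, and chaining by transitivity (all steps being monotone) delivers $T(\nabla_Y)\circ f\le c_{Y,Y}\circ\nabla_{T(Y)}\circ f\le c_{Y,Y}\circ(f\otimes f)\circ\nabla_X$. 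Dually, colax affineness gives $T(!_Y)\le \eta_I\circ !_{T(Y)}$ and oplax discardability of $f$ gives $!_{T(Y)}\circ f\le !_X$, which chain to $T(!_Y)\circ f\le \eta_I\circ !_{T(Y)}\circ f\le \eta_I\circ !_X$.

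I expect the only real obstacle to be bookkeeping rather than anything conceptual: one must keep the direction of every 2-cell straight, since the colax inequalities of $T$ and the oplax inequalities of $\mC$ must compose the ``same way'' under transitivity, and one must check that the monad-law simplifications collapse exactly the expected $\eta$/$\mu$ pairs. Once the two reductions above are verified, oplax copyability and oplax discardability hold for every arrow of $\mC_T$, which is precisely what Definition~\ref{def oplax cartesian cat} requires.
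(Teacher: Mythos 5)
Your proposal is correct and follows essentially the same route as the paper's own proof: both reduce the two oplax inequalities in $\mC_T$ to the $\mC$-inequalities $T(\nabla_Y)\circ f\le c_{Y,Y}\circ(f\otimes f)\circ\nabla_X$ and $T(!_Y)\circ f\le \eta_I\circ !_X$ via the monad unit laws and naturality of $\eta$, and then chain colax relevance/affineness of $T$ with the oplax copyability/discardability of $f\colon X\to T(Y)$ in $\mC$. The only difference is that you spell out the inherited preorder-enrichment of $\mC_T$ a little more explicitly, which the paper takes for granted.
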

\begin{proof}
Recall that $\mC_T$ is gs-monoidal thanks to Proposition \ref{prop: Kleisli FGTC} and inherits the preorder-enrichment from $\mC$. We now prove that it is oplax cartesian.

	We first show that for every arrow $f:X\to Y$ in $\mC_T$, i.e. $f:X\to TY$ in $\mC$, it holds that $!^\sharp_Y \circ^\sharp f\le !^\sharp_X$ in $\mC_T$, i.e. that 
	\[T(!_Y) \circ f\le \eta_I\circ !_X\]
	 in $\mC$.
	Applying first the assumption that $T$ is colax affine (i.e. that $T(!_Y)\le \eta_I\circ !_{T(Y)}$) and then  the fact that $\mC$ is oplax cartesian (in particular, that $!_{T(Y)}\circ f\leq\,  !_{X}$), we obtain
	\[T(!_Y)\circ f\le \eta_I\circ !_{T(Y)}\circ f\leq \eta_I\circ !_{X}.\]

	It remains to prove that for every $f:X\to Y$ in $\mC_T$ it holds that $\nabla^\sharp_Y\circ^\sharp f\le  (f\otimes^{\sharp} f)\circ^\sharp\nabla^\sharp_X$ in $\mC_T$, i.e. that 
	\[T(\nabla_Y)\circ f\le  (\mu_{Y\otimes Y} \circ T(c_{Y,Y})\circ T(f\otimes f))\circ (\eta_{X\otimes X}\circ \nabla_X)\]
	in $\mC$. Notice that, using the naturality of $\eta$ and $\mu$, the previous inequality happens to be equivalent to
\[T(\nabla_Y)\circ f\le  c_{Y,Y}\circ (f\otimes f)\circ  \nabla_X\]	
	in $\mC$. But this can be easily derived as follows
	\[
	T(\nabla_Y)\circ f  \le c_{Y,Y}\circ \nabla_{T(Y)}\circ f \le c_{Y,Y}\circ (f\otimes f) \circ \nabla_X \]
	by first using the fact that $T$ is colax relevant (i.e. $T(\nabla_Y) \le c_{Y,Y}\circ \nabla_{T(Y)}$), and then the oplax cartesianity of $\mC$ (in particular, that $\nabla_{T(Y)}\circ f \le  (f\otimes f) \circ \nabla_X$).
\end{proof}

\begin{myexample}
\label{preset}
Given a preorder $(X, \leq)$, the Hoare preorder on its subsets is given by $U \leq_d V$ if 
for any $x \in U$ there exists $y \in V$ such that $x \leq y$. Or, equivalently,
if $\downarrow U \subseteq \downarrow V$, for 
$\downarrow U = \{x \in X \mid \exists u \in U. x \leq u\}$
the \emph{downward-closure} of $X$.

Consider now the finite subset functor $\mathcal{P}$ on $\mathbf{Set}$,
and recall that the sub-category $\mathbf{PreOrd}$ is preorder-enriched
(see Example~\ref{preord}). 
Now, $\mathcal{P}$ can be extended to a preorder-enriched functor
$\mathbf{PreOrd} \to \mathbf{PreOrd}$
with the preorder $\leq_d$ on subsets given above.
Indeed, if $f \leq g: (X, \leq) \to (Y, \leq)$, then
$\mathcal{P}(f) \leq \mathcal{P}(g)$
since $\mathcal{P}(f)(U) = \bigcup_{x \in U} f(x)
\leq_d \bigcup_{x \in U} g(x) = \mathcal{P}(g)(U)$
for all $U \subseteq X$.

%
%
In fact, $\mathcal{P}$ is a colax gs-monoidal monad and it becomes affine or relevant
if one restricts respectively to subsets with  at least or at most one element,
as done in Remark \ref{power}. 
\end{myexample}

%
%

\begin{myremark}
Consider now the full poset-enriched sub-category $\mathbf{PO}$ of $\mathbf{PreOrd}$ of posets.
The functor $\mathcal{P}^\downarrow:\mathbf{PO}\to \mathbf{PO}$ assigns to 
each poset $(X, \leq)$ its (possibly infinite) downward-closed subsets: the preorder $\leq_d$ in Example~\ref{preset} 
is a partial order, since it coincides with subset inclusion.
As for functions, if  $f: (X, \leq) \to (Y, \leq)$, then $\mathcal{P}^\downarrow(f)$ sends a downward-closed  
subset $U$ of $X$ into the  downward-closed subset $\downarrow \bigcup_{x \in U} f(x)$ of $Y$.

It is easy to check that $\mathcal{P}^\downarrow$ is a poset-enriched functor.
It is a colax gs-monoidal monad, 
with essentially the same structure arrows as $\mathcal{P}$, e.g. 
$\eta_X: (X, \leq) \to (\mathcal{P}^{\downarrow}(X), \subseteq)$ sends $x\in X$ to $\downarrow \{x\}$.
This monad becomes affine when restricted to non-empty downward-closed subsets.

The monad $\mathcal{P}^\downarrow$ coincides with the \emph{lower subset monad} in~\cite{banaschewski1991projective}.
Also, as shown in~\cite{banaschewski1991projective}, the algebras of $\mathcal{P}^\downarrow$ are posets with arbitrary sups, 
and the free algebras are supercoherent posets, i.e. posets with arbitrary suprema and where every element 
is a union of supercompact elements. 
Similar considerations can be made for the category $\mathbf{PO}_\bot$ of pointed posets~\cite{kozen2013kleene}.
\end{myremark}

\begin{myexample}
\label{laxSpecial}
	Let $\mathcal{C}$ be a preorder-enriched gs-monoidal category and $M\in\mathcal{C}$ equipped with a monoid
	structure that is \emph{lax}
	special and \emph{lax} connected, i.e. 
	such that it holds
	\ctikzfig{actionmonadexample}
	Then the \textit{action monad} $(-)\otimes M:\mathcal{C}\to \mathcal{C}$ is colax gs-monoidal with 
	the same structure maps as those in Example~\ref{CSMonad}.

	Looking at Example~\ref{example: frobenius span} and Example~\ref{example: pspan e' oplax},
	$(\mathbf{PSpan}(\mathcal{A}),\times,1)$ 
	is an oplax bicartesian category 
	such that each object is special and lax connected, 
	yet not connected.
\end{myexample}

\begin{myexample}
	Given a preorder $(X, \leq)$, we define the \emph{upward-closure} of a subset $ U \subseteq X$ as $\uparrow U = \{x \in X \mid \exists u \in U.\, u \leq x\}$. 
	Consider also a semiring $(M,\oplus, \odot, 0, 1)$ and the functor $\mathcal{M}:\mathbf{Set}\to \mathbf{Set}$ discussed in Example~\ref{example: monad semiring}.
	If $M$ is preorder-enriched and $0$ is a minimal element for such preorder, then $\mathcal{M}(X)$ can be equipped with a preorder given as $h\le_u k: X\to M$ whenever
	for every upward-closed subset $U\subseteq X$ 
	\[\underset{x\in U}{\bigoplus}h(x)\le \underset{x\in U}{\bigoplus}k(x)\]
	 We can then extend $\mathcal{M}$ to a preorder-enriched functor $\mathcal{M}:\mathbf{PreOrd}\to \mathbf{PreOrd}$.
	 Let $f:(X, \leq) \to (Y, \leq)$ be a monotone function and note that for every
	 upward-closed subset $V\subseteq Y$ it holds that $f^{-1}(V)$ is upward-closed.
	 To prove that $\tilde{f}$ is monotone it suffices to see that for $h \leq_u k$ and $V$ upward-closed we have
	 \[ \underset{y\in V}{\bigoplus}\underset{x\in f^{-1}(y)}{\bigoplus}h(x) = \underset{x\in f^{-1}(V)}{\bigoplus}h(x) \le\underset{x\in f^{-1}(V)}{\bigoplus}k(x) = \underset{y\in V}{\bigoplus}\underset{x\in f^{-1}(y)}{\bigoplus}k(x) \]
	To prove that it is preorder-enriched we need to show that if
	$f\le g:X\to Y$ then for every $h:X\to M$ it holds $\tilde{f}(h)\le_u \tilde{g}(h): Y \to M$. Indeed,
	for every $V\subseteq Y$ upward-closed, if $y\in V$ and $x\in f^{-1}(y)$ then $y\le g(x)$ and $g(x)\in V$. Hence
	\[ \underset{y\in V}{\bigoplus}\underset{x\in f^{-1}(y)}{\bigoplus}h(x)\le  \underset{y\in V}{\bigoplus}\underset{x\in g^{-1}(y)}{\bigoplus}h(x) \]
	since $0$ is a minimal element for the preorder on $\mathcal{M}$ and thus $a \leq a \oplus b$ always holds. 
	Moreover, $\mathcal{M}$ is a symmetric monoidal monad with the usual 
	structure arrows
	 \begin{itemize}
		\item[-] \(\eta_X(x_0)(x)=\begin{cases}
			1\ \text{if}\ x=x_0\\
			0\ \text{otherwise}
		\end{cases}\)
		\item[-] $\mu_X(\lambda)(x)=\underset{h\in\mathcal{M}(X)}{\bigoplus}\lambda(h)\odot h(x)$
		\end{itemize}
	 Recall now Example~\ref{sub}.
	 The monad $\mathcal{M}$ just defined becomes colax affine if one restricts $\mathcal{M}(X)$ to functions with finite support and sub-normalised; it is affine if one further restricts to normalised functions.
	 Moreover, it is colax relevant if one restricts $\mathcal{M}(X)$ to sub-idempotent functions 
	 (thanks to 0 being minimal we may drop the requirement that the support is at most one for $\mathcal{M}^s_e(X)$ in Example~\ref{sub}); 
	 it is relevant if one further restricts to idempotent functions  with support at most one. 
		If we consider the semiring of positive real numbers and restrict $\mathcal{M}(X)$ to normalised functions, then the above monad can be viewed as a 
	 discrete version of the preordered Kantorovich monad, as defined in~\cite{fritz2020stochastic}.
\end{myexample}

\begin{myexample}
Let us follow up on the previous example and assume that $M$ is a
\emph{dioid}, i.e. that the addition is idempotent.
It is well-known that the canonical preorder $\leq_M$
is now a partial order, and it can be equivalently defined as $a \leq_M b$ if $a \oplus b = b$.
In fact, $\leq_M$ is a join-semilattice, since $a \vee b = a \oplus b$ and $\bot = 0$.
Let us further assume that such a partial order is actually total. Then, the 
preorder $\leq_u$ can be described as
$h \leq_u k$ if for any $x \in supp(h)$ there exists $y \in supp(k)$ such that 
$x \leq y$ and $h(x) \leq_M k(y)$.
For $\mathcal{M}$ the Boolean semiring, we recover the Hoare preorder
of Example \ref{preset}.
\end{myexample}

\subsection{Some properties of maps}
We first recall the notions of \emph{adjoint} and \emph{map}~\cite{CARBONI198711}.
	
\begin{mydefinition}
	Let $\mC$ be a poset-enriched category. An arrow $f:X\to Y$ is left adjoint to an arrow $f^*:Y\to X$ if
\ctikzfig{adjoints}
Equivalently, $f^*$ is right adjoint to $f$. In symbols $f\dashv f^*$.
\end{mydefinition}

\begin{mydefinition}
		Let $\mC$ be a posetal oplax cartesian category. An arrow $\freccia{X}{f}{Y}$ is a \textbf{map} if it has a right adjoint $f\dashv f^*$.
	\end{mydefinition}
	
We denote by $\mathbf{Map}(\mC)$ the poset-enriched sub-category of $\mC$ 
 whose arrows are maps.

\begin{mylemma}
Let $\mC$ be a posetal oplax cartesian category. Then every map is $\mC$-functional and $\mC$-total.
\end{mylemma}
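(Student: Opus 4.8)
The plan is to exploit that $\mC$, being oplax cartesian, already provides one of the two inequalities needed for each equation: for the arrow $f$ we have $\nabla_Y \circ f \le (f \otimes f) \circ \nabla_X$ and $!_Y \circ f \le !_X$. It therefore remains to establish the reverse inequalities, and this is exactly where the adjunction $f \dashv f^*$ enters, through its unit $\id_X \le f^* \circ f$ and its counit $f \circ f^* \le \id_Y$. The idea in both cases is to insert the unit to introduce a factor $f^* \circ f$, then push the oplax inequality \emph{for $f^*$} past the comonoid structure, and finally absorb the leftover $f \circ f^*$ using the counit together with monotonicity of the monoidal product.

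For $\mC$-totality I would argue as follows. Precomposing the unit inequality $\id_X \le f^* \circ f$ with $!_X$ gives $!_X \le !_X \circ f^* \circ f$. Applying the oplax discardable inequality to the arrow $f^*$, namely $!_X \circ f^* \le !_Y$, and postcomposing with $f$ yields $!_X \circ f^* \circ f \le !_Y \circ f$. Chaining these gives $!_X \le !_Y \circ f$, which together with the oplax discardable inequality $!_Y \circ f \le !_X$ for $f$ forces $!_Y \circ f = !_X$, i.e. $f$ is $\mC$-total.

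For $\mC$-functionality the chain is longer. Starting from $(f \otimes f) \circ \nabla_X$, I would insert the unit after $\nabla_X$ to obtain $(f \otimes f) \circ \nabla_X \le (f \otimes f) \circ \nabla_X \circ f^* \circ f$. Next I would apply the oplax copyable inequality for $f^*$, that is $\nabla_X \circ f^* \le (f^* \otimes f^*) \circ \nabla_Y$, which bounds the right-hand side by $(f \otimes f) \circ (f^* \otimes f^*) \circ \nabla_Y \circ f$. By functoriality of the tensor this equals $\bigl((f \circ f^*) \otimes (f \circ f^*)\bigr) \circ \nabla_Y \circ f$, and now the counit $f \circ f^* \le \id_Y$ together with monotonicity of $\otimes$ gives $(f \circ f^*) \otimes (f \circ f^*) \le \id_{Y \otimes Y}$, so the whole expression is bounded by $\nabla_Y \circ f$. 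Combining with the oplax copyable inequality $\nabla_Y \circ f \le (f \otimes f) \circ \nabla_X$ yields $\nabla_Y \circ f = (f \otimes f) \circ \nabla_X$, i.e. $f$ is $\mC$-functional.

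The routine verifications are the monotonicity steps (pre- and postcomposition, and $\otimes$ preserving $\le$), which hold because $\mC$ is preorder-enriched monoidal. The only genuinely delicate point is the functionality direction: one must apply the oplax inequality to $f^*$ rather than to $f$, and keep track that the residual $f \circ f^*$ ends up on the correct side so that the counit can discharge it via monotonicity of the tensor; the totality direction is by contrast essentially a one-line adjunction argument.
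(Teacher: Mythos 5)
Your proposal is correct and follows essentially the same route as the paper's proof: the oplax cartesian axioms give one direction of each equation, and the reverse inequalities are obtained by inserting the unit $\id_X \le f^* \circ f$, applying the oplax discardable/copyable inequality to $f^*$, and discharging the residual $f \circ f^*$ via the counit and monotonicity of $\otimes$. Both chains match the paper's string-diagram derivations step for step.
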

\begin{proof}
By definition of oplax cartesianity, we need to show that maps  $\freccia{X}{f}{Y}$ satisfy
\ctikzfig{map_are_tot_and_fun}
To show that maps are $\mC$-total it suffices to note that, by definition of left adjoint, we have 
\ctikzfig{proof_map_are_tot}
where the last inequality follows by the second axiom of oplax cartesian categories (applied to $f^*$).
Similarly, we can prove that maps are $\mC$-functional as follows
\ctikzfig{proof_map_are_fun} 
where the first and the last inequalities follow from the fact that $f\dashv f^*$, and the second one follows from the first inequality in the definition of oplax cartesian category.
\end{proof}

\begin{myremark}\label{rem_map_sub_cat_of_TFun}
An arrow that is both $\mC$-total and $\mC$-functional is not
necessarily a map. In general,  $\mathbf{Map}(\mC)$ is just a poset-enriched monoidal sub-category of $\mC$-$\mathbf{TFun}$.
\end{myremark}

\subsection{Cartesian bicategories through a gs-monoidal lens}
Now we recall the notion of \emph{cartesian bicategory}~\cite[Def. 1.2]{CARBONI198711}, presenting it in terms of oplax cartesian category.
\begin{mydefinition}\label{def_cartesian_bicategory}
	A \textbf{cartesian bicategory} $\mC$ is a posetal oplax cartesian category such that
 for every object $X$, the arrows $\nabla_X : X \to X \otimes X$ and $!_X : X \to I$ are maps, i.e. there are arrows $\Delta_X : X \otimes X \to X $ and $\cobang_X: I \to X$ such that 
 \ctikzfig{cart_bicat_def}
\ctikzfig{cart_bicat_def_2}
\end{mydefinition}

These inequalities act on orthogonal components, and thus hold for a large number of constructions. 
As partly shown later, they hold for bicategories of spans and cospans and coproduct on 
$\mathbf{Set}$, sometimes with an equality~\cite{Bruni2003}.

The overloaded choice of the symbols $\Delta_X$ and $\cobang_X$ is by no means by chance. 

\begin{myproposition}\label{prop: every cartesian bicategory is oplax cocartesian}
	Every cartesian bicategory $\mC$ is oplax cocartesian.
\end{myproposition}
	\begin{proof}
	First, note that $\mC$ is a cogs-monoidal category. Indeed, all the equations that $\Delta_X$ and $\cobang_X$
	have to satisfy follow from the fact that the right adjoint is unique and from the right adjoint inequalities 
	in Definition \ref{def_cartesian_bicategory}, see also \cite[Rem. 1.3]{CARBONI198711}. The two inequalities in the definition of oplax cocartesian category can be proved as follows: 
	for every arrow $f:X\to Y$
\ctikzfig{oplax_cocart_prop_1}
where the first and the last inequality follow by the properties of right adjoints, while the second one follows from oplax cartesianity. Similarly, for every arrow $f:X\to Y$
\ctikzfig{oplax_cocart_prop_2}\vspace{-1.2\baselineskip}\qedhere
	\end{proof}

We sum up the relationship between bicategories and the gs-monoidal framework.

\begin{mycorollary}
Cartesian bicategories correspond exactly to posetal oplax bicartesian categories 
that are lax special and lax connected and satisfy
\ctikzfig{cart_bicat_mezzo}
\end{mycorollary}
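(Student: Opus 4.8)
The statement is a biconditional whose two halves both amount to matching the defining inequalities of a cartesian bicategory (Definition~\ref{def_cartesian_bicategory}) against the named laws on the right. The plan is to regard the four adjunction inequalities witnessing $\nabla_X \dashv \Delta_X$ and $!_X \dashv \cobang_X$ as the union of lax speciality, lax connectedness, and the two inequalities displayed in the statement, and then to read off the correspondence in both directions.

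For the forward implication I would start from a cartesian bicategory $\mC$. By definition it is posetal oplax cartesian, and by Proposition~\ref{prop: every cartesian bicategory is oplax cocartesian} the right adjoints $\Delta_X$ and $\cobang_X$ make $\mC$ cogs-monoidal and oplax cocartesian, hence posetal oplax bicartesian. It then remains only to relabel the adjunction data: the unit of $\nabla_X \dashv \Delta_X$ is the inequality $\id_X \le \Delta_X \circ \nabla_X$, which is the lax special law, and the counit of $!_X \dashv \cobang_X$ is $!_X \circ \cobang_X \le \id_I$, which is the lax connected law. The two surviving inequalities, namely the counit $\nabla_X \circ \Delta_X \le \id_{X \otimes X}$ and the unit $\id_X \le \cobang_X \circ !_X$, are exactly those displayed in the statement.

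For the converse I would assume $\mC$ posetal oplax bicartesian, lax special, lax connected, and satisfying the two displayed inequalities. Being oplax bicartesian, $\mC$ already carries a commutative comonoid $(\nabla_X,!_X)$ and a commutative monoid $(\Delta_X,\cobang_X)$ on every object. By the very definition of adjoint, the lax special law together with the first displayed inequality give the unit and counit that exhibit $\nabla_X \dashv \Delta_X$, while the lax connected law together with the second give $!_X \dashv \cobang_X$. Hence $\nabla_X$ and $!_X$ are maps with the expected right adjoints, and $\mC$ satisfies Definition~\ref{def_cartesian_bicategory}.

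The step I expect to require the most care is the coherence between the two monoid structures that enter the argument. In the forward direction the monoid coming from the oplax cocartesian structure has to be the very monoid given by the right adjoints; this is forced because right adjoints in a poset-enriched setting are unique, which is the same observation that drives Proposition~\ref{prop: every cartesian bicategory is oplax cocartesian}. In the converse no uniqueness is needed — one merely verifies that the monoid already present satisfies the four inequalities, so it \emph{is} a right adjoint. Finally I would sanity-check that the lax bicommutative bimonoid inequalities automatically available in any oplax bicartesian category (Remark~\ref{laxbialg}) are not tacitly used beyond the two displayed ones, confirming that lax speciality, lax connectedness and the displayed inequalities together recover precisely the four triangle inequalities and nothing more.
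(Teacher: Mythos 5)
Your overall strategy -- reading the corollary as a matching of the four adjunction inequalities for $\nabla_X \dashv \Delta_X$ and $!_X \dashv \cobang_X$ against the named laws, with Proposition~\ref{prop: every cartesian bicategory is oplax cocartesian} supplying the oplax bicartesian structure in the forward direction and uniqueness of right adjoints handling coherence -- is exactly the paper's. However, your dictionary between the named laws and the four inequalities is wrong in one place, and this propagates. You identify lax connectedness with the counit $!_X \circ \cobang_X \le \id_I$. In the paper's usage (see Remark~\ref{isoToI} and the fact that connectedness of $M$ makes the action monad of Example~\ref{CSMonad} affine), \emph{connected} means $\cobang_X \circ !_X = \id_X$, so \emph{lax connected} is the unit inequality $\id_X \le \cobang_X \circ !_X$ -- the one you instead assign to the displayed figure. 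Worse, the inequality you call lax connectedness is automatically true in every oplax bicartesian category: oplax discardability applied to $\cobang_X \colon I \to X$ gives $!_X \circ \cobang_X \le\ !_I = \id_I$. Under your reading the hypothesis ``lax connected'' would therefore be vacuous, which cannot be the intended content of the statement.

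This is precisely the point where the paper's proof diverges from yours: it invokes Remark~\ref{laxbialg} to observe that the counit inequality $!_X \circ \cobang_X \le \id_I$ comes for free from the lax bimonoid structure of an oplax bicartesian category, so that only \emph{three} conditions need to be imposed -- lax specialness ($\id_X \le \Delta_X \circ \nabla_X$), lax connectedness ($\id_X \le \cobang_X \circ !_X$), and the single displayed inequality $\nabla_X \circ \Delta_X \le \id_{X \otimes X}$. You explicitly state that the lax bimonoid inequalities of Remark~\ref{laxbialg} are ``not tacitly used,'' whereas they are an essential ingredient: without them your accounting requires four assumed inequalities rather than three plus one derived, and with your labels the three named hypotheses do not add up to the adjunctions. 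The fix is to swap your assignments for the two $!$/$\cobang$ inequalities and to add the derivation of $!_X \circ \cobang_X \le \id_I$ from oplax discardability.
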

\begin{proof}
The proposition above implies that a cartesian bicategory
is an oplax bicartesian category, see Definition~\ref{def oplax cartesian cat}.
Now, Remark~\ref{laxbialg} ensures that 
an oplax bicartesian category is also lax algebraic, hence 
the rightmost axiom on the bottom of Definition~\ref{def_cartesian_bicategory}
holds. Then, being lax special and lax connected, see Example~\ref{laxSpecial},
 the additional requirement ensures the correspondence with the remaining inequalities
holding for cartesian bicategories.
\end{proof}

\begin{mydefinition}\label{dfn: bicategory of bialgebras}
A \textbf{bicategory of bialgebras} is a cartesian bicategory 
that is also a bialgebraic category.
\end{mydefinition}

As before, a key difference are the additional laws for their interaction~\cite{BruniGM02}.

\begin{mydefinition}\label{dfn: bicategory of relations}
A \textbf{bicategory of relations} is a cartesian bicategory 
that is also a Frobenius category.
\end{mydefinition}

\begin{myexample}
As expected, $(\mathbf{PRel}(\mathcal{A}),\times,1)$ is a bicategory of relations and $(\mathbf{PRel}(\mathcal{A}),+,0)$ a bicategory of bialgebras.
\end{myexample}

\begin{myremark}
The notion of bicategory of relations is equivalent to the notion of \emph{unitary pretabular allegory} in the sense of Freyd and Scedrov~\cite{freyd1990categories}. Moreover, a bicategory of relations $\mC$ happens to be biequivalent to the regular category $\mathbf{Map}(\mC)$ under the further assumption of being \emph{functionally complete}, as shown in \cite[Th. 3.5]{CARBONI198711}. We refer to \cite{Fong19} for more details.
\end{myremark}

%
%

\begin{myremark}
One of the useful consequences of the Frobenius law is that, as it happens in $\mathbf{Rel}$, in every cartesian bicategory of relations, if $f\leq g$ and both are maps then they are equal~\cite{CARBONI198711}, i.e. the order in $\mathbf{Map}(\mC)$  is discrete.
\end{myremark}

\begin{myremark}\label{rem_map_prod}
We recall from \cite{CARBONI198711} that the category $\mathbf{Map}(\mC)$ of a cartesian bicategory is actually cartesian. Moreover, combining this fact with Corollary~\ref{cor_TFun_is_cartesian} it follows that   $\mathbf{Map}(\mC)$ is a cartesian sub-category of  $\mC$-\textbf{TFun}. However, the cartesian structure is not enough to conclude that every $\mC$-functional and $\mC$-total arrow is a map. Indeed, this happens when $\mC$ is a bicategory of relations~\cite[Lem. 2.5]{CARBONI198711}.
\end{myremark}

\begin{myproposition}
	Let $(T,\mu, \eta,c, u)$ be a colax gs-monoidal monad on a poset-enriched gs-monoidal category $\mC$. 
	If $\mC$ is a cartesian bicategory then so is the Kleisli category $\mC_T$.
\end{myproposition}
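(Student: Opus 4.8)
The plan is to check the two requirements of Definition~\ref{def_cartesian_bicategory}: that $\mC_T$ is a posetal oplax cartesian category, and that for each object $X$ the duplicator $\nabla^\sharp_X$ and the discharger $!^\sharp_X$ are maps.

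First I would dispatch the oplax cartesian structure. Since a cartesian bicategory is in particular a posetal oplax cartesian category and $T$ is a colax gs-monoidal monad, Proposition~\ref{prop: Kleisli colax cartesian} gives that $\mC_T$ is oplax cartesian. To see that it is moreover posetal, recall that the enrichment of $\mC_T$ is inherited from $\mC$: two arrows $\phi,\psi\colon X\to Y$ of $\mC_T$, viewed as arrows $X\to TY$ of $\mC$, satisfy $\phi\le\psi$ in $\mC_T$ exactly when $\phi\le\psi$ in $\mC$. As $\mC$ is posetal this relation is antisymmetric, so $\mC_T$ is posetal too.

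The substance of the proof is the second requirement, and my key observation is that the canonical identity-on-objects functor $\mathcal{K}\colon\mC\to\mC_T$ of Remark~\ref{rmk: funtore C ---> C_T} is locally monotone: if $f\le g$ in $\mC$ then $\eta_Y\circ f\le\eta_Y\circ g$ by monotonicity of composition, that is $\mathcal{K}(f)\le\mathcal{K}(g)$ in $\mC_T$. A locally monotone functor preserves adjunctions between poset-enriched categories, since applying it to the unit and counit inequalities $\id\le f^*\circ f$ and $f\circ f^*\le\id$ of an adjunction $f\dashv f^*$ yields, using functoriality, $\id\le\mathcal{K}(f^*)\circ^\sharp\mathcal{K}(f)$ and $\mathcal{K}(f)\circ^\sharp\mathcal{K}(f^*)\le\id$, i.e. $\mathcal{K}(f)\dashv\mathcal{K}(f^*)$ in $\mC_T$.

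It then remains to transport the adjunctions witnessing that $\mC$ is a cartesian bicategory. By Proposition~\ref{prop: every cartesian bicategory is oplax cocartesian} the cogs-monoidal structure arrows $\Delta_X$ and $\cobang_X$ of $\mC$ are the right adjoints $\nabla_X\dashv\Delta_X$ and $!_X\dashv\cobang_X$. Since $\mathcal{K}(\nabla_X)=\eta_{X\otimes X}\circ\nabla_X=\nabla^\sharp_X$ and $\mathcal{K}(\Delta_X)=\eta_X\circ\Delta_X=\Delta^\sharp_X$ --- the latter being exactly the cogs-monoidal structure on $\mC_T$ produced by Proposition~\ref{prop:Kleisli cogs} --- and similarly $\mathcal{K}(!_X)=\;!^\sharp_X$ and $\mathcal{K}(\cobang_X)=\cobang^\sharp_X$, the preceding paragraph yields $\nabla^\sharp_X\dashv\Delta^\sharp_X$ and $!^\sharp_X\dashv\cobang^\sharp_X$ in $\mC_T$. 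Hence $\nabla^\sharp_X$ and $!^\sharp_X$ are maps and $\mC_T$ is a cartesian bicategory. I expect the only delicate point to be bookkeeping: confirming that $\mathcal{K}$ is genuinely locally monotone and that it sends the (co)monoid arrows of $\mC$ precisely to the Kleisli structure arrows $\nabla^\sharp_X,\Delta^\sharp_X,!^\sharp_X,\cobang^\sharp_X$; once this is in place, the adjunction inequalities of Definition~\ref{def_cartesian_bicategory} transport along $\mathcal{K}$ with no further computation.
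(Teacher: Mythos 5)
Your proposal is correct and follows essentially the same route as the paper: establish oplax cartesianity of $\mC_T$ via Proposition~\ref{prop: Kleisli colax cartesian}, then obtain the right adjoints $\Delta^\sharp_X=\eta_X\circ\Delta_X$ and $\cobang^\sharp_X=\eta_X\circ\cobang_X$ by transporting the adjunctions along the poset-enriched functor $\mathcal{K}\colon\mC\to\mC_T$. The paper states this more tersely, so your spelling out of why a locally monotone functor preserves adjunctions is a welcome elaboration rather than a deviation.
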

\begin{proof}

By Proposition \ref{prop: Kleisli colax cartesian}, we have that $\mC_T$ is oplax cartesian. Moreover, by defining $\Delta^{\sharp}_X : X \otimes X \to X $ and $\cobang^{\sharp}_X: I \to X$ in $\mC_T$ as the arrows $\Delta^{\sharp}_X:=\eta_X\circ \Delta_X$ and $\cobang^{\sharp}_X:= \eta_X\circ\cobang_X$ of $\mC$, we obtain right adjoints
to $\nabla^\sharp$ and $!^\sharp$ since the functor $\mathcal{K}:\mC\to \mC_T$ is poset-enriched. \end{proof}

\begin{mycorollary}
	Let $(T,\mu, \eta,c, u)$ be a colax gs-monoidal monad on a poset-enriched gs-monoidal category $\mC$. 
	If $\mC$ is a bicategory of bialgebras (or a bicategory of relations) then so is the Kleisli category $\mC_T$.
\end{mycorollary}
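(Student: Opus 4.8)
The plan is to combine the two structural results already in hand, after unwinding the definitions. By Definition~\ref{dfn: bicategory of bialgebras} (resp.\ Definition~\ref{dfn: bicategory of relations}), a bicategory of bialgebras (resp.\ of relations) is precisely a cartesian bicategory that in addition satisfies the bialgebra laws (resp.\ the Frobenius law). So the corollary splits into two independent tasks: first, transporting the cartesian-bicategory structure to $\mC_T$; second, transporting the extra equational law. First I would invoke the preceding Proposition, which guarantees that whenever $\mC$ is a cartesian bicategory the Kleisli category $\mC_T$ is again a cartesian bicategory, with duplicator and discharger $\nabla^\sharp_X=\eta_{X\otimes X}\circ\nabla_X$ and $!^\sharp_X=\eta_I\circ !_X$ admitting the right adjoints $\Delta^\sharp_X=\eta_X\circ\Delta_X$ and $\cobang^\sharp_X=\eta_X\circ\cobang_X$.

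It then remains only to check the extra equational law, and for this I would appeal to Corollary~\ref{cor: kleisli bigs}. A colax gs-monoidal monad is in particular a symmetric monoidal monad, and a bicategory of bialgebras (resp.\ of relations) is in particular a bialgebraic (resp.\ Frobenius) category; hence the hypotheses of Corollary~\ref{cor: kleisli bigs} are met, and it yields that $\mC_T$ is again bialgebraic (resp.\ Frobenius). The one thing to verify is that the two results speak about the very same Kleisli structure: the comonoid and monoid arrows supplied by the cartesian-bicategory Proposition coincide with those supplied by Propositions~\ref{prop: Kleisli FGTC} and~\ref{prop:Kleisli cogs} underlying Corollary~\ref{cor: kleisli bigs}, since in both cases they are defined as $\eta$ post-composed with the corresponding arrows of $\mC$. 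Thus the bigs-monoidal structure on $\mC_T$ is unambiguous and the two conclusions genuinely refer to one and the same category.

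The conceptual reason the equational laws transfer, which I would flag explicitly, is Remark~\ref{rmk: funtore C ---> C_T}: the canonical functor $\mathcal{K}\colon\mC\to\mC_T$ is strict symmetric monoidal and therefore preserves every equality of arrows built by composing and tensoring the structural maps. Both the bialgebra laws and the Frobenius law are exactly equalities of this kind, so they hold in $\mC_T$ as soon as they hold in $\mC$. Combined with the cartesian-bicategory transfer, this closes both cases, which differ only in which additional law is carried along by $\mathcal{K}$.

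I do not expect any genuine obstacle here, as the statement is a direct synthesis of the cartesian-bicategory Proposition and of Corollary~\ref{cor: kleisli bigs}. The only mild subtlety, and the single point I would be careful to spell out, is the bookkeeping confirming that the comonoid/monoid structures coming from these two sources agree; once that identification is made, the bialgebra and the relations cases follow at once.
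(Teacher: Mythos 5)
Your proposal is correct and follows essentially the same route as the paper: the preceding Proposition supplies the cartesian-bicategory structure on $\mC_T$, and the extra bialgebra/Frobenius laws transfer because the strict symmetric monoidal functor $\mathcal{K}\colon\mC\to\mC_T$ preserves equalities of arrows built from composites and tensors of structural maps (Remark~\ref{rmk: funtore C ---> C_T}), which is exactly the one-line argument the paper gives. Your additional bookkeeping that the comonoid/monoid structures from the two sources coincide is a reasonable and harmless elaboration.
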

\begin{proof}
It follows from the fact that the functor $\mathcal{K}: \mC \to \mC_T$ preserves the equalities of arrows obtained through composition and products of structural maps, see Remark \ref{rmk: funtore C ---> C_T}.	
\end{proof}

\section{Conclusions and further work}
\label{sec:conclusions}
The aim of the paper has been twofold. On the one side, we strove for putting some order in the  array of those variants of symmetric monoidal categories, possibly poset-enriched,  
that have been proposed with computational and graphical aims in recent years. We hope that our streamlined presentation in terms of gs-monoidal categories could be beneficial to the 
interested community.
On the other side, we presented a series of results concerning the gs-monoidal and oplax cartesian structures of Kleisli categories, putting some order also on 
those variants of enrichments of commutative monads proposed in the literature.

Future threads of investigations include the extension of the taxonomy towards traced gs-monoidal categories~\cite{Joyal_tracedcategories,CorradiniGadducci99b}, in order to account for systems with feedback, and the lively area investigating alternative notions of Markov categories~\cite{di2025partial,LavoreR23,FritzGPT23}
 and affine monads~\cite{Jacobs16,FritzGPT23}, aimed  at distilling a categorical presentation of probability theory, see e.g.~\cite{Jacobs18}.
  And finally, the exploration of completeness theorems for functorial semantics, see 
 e.g.~\cite{FritzGCT23}.


\section*{Acknowledgements}
This research was partly funded by the Advanced Research + Invention Agency (ARIA) Safeguarded AI Programme
and by the EU through the MSCA SE project QCOMICAL (Grant Agreement 101182520).
The authors are indebted to Nathanael Arkor, Tobias Fritz, Bart Jacobs, and Paolo Perrone for their insightful
comments on a draft of this paper.

\bibliographystyle{alphaurl}
\bibliography{references}


\appendix
\section{Basic notions of category theory}
	\subsection{Regular and extensive categories}
	\begin{mydefinition}\label{def:regular category}
		A category with finite limits $\mC$ is called \textbf{regular} if it has coequalisers of kernel pairs and regular epis are stable under pullbacks.
	\end{mydefinition}
	
	The following is a well known characterisation of regular categories, see \cite{Granregular}.
	
	\begin{mytheorem}\label{thm:regular char}
		Let $\mC$ be a category with finite limits. Then $\mC$ is regular if and only if  every arrow $f:X\to Y$ factors as $f=m\circ e$ where $e:X\to I$ is a regular epi and $m:I\to Y$ is a mono, 
		and such factorisation is stable under pullbacks.
		\end{mytheorem}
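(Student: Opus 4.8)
The plan is to prove both implications by playing kernel pairs, their coequalisers, and the pullback-stability hypothesis against one another. For the forward direction, assume $\mC$ is regular in the sense of Definition~\ref{def:regular category}. Given $f\colon X\to Y$, I would form its kernel pair $(R,a,b)$ (available since $\mC$ has finite limits) and take the coequaliser $e\colon X\to I$ of $a,b$ (available by the first regularity axiom). Because $fa=fb$, the arrow $f$ coequalises the pair and hence factors uniquely as $f=m\circ e$, with $e$ a regular epi by construction. The whole content of this direction is then to show that $m$ is monic.

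This is where stability of regular epis enters, and it is the step I expect to be the main obstacle. To prove $m$ monic it suffices to show that any $g,h\colon Z\to I$ with $mg=mh$ coincide. I would pull back $e$ along $g$ and along $h$; by stability both projections onto $Z$ are regular epis, and forming their fibre product $Q$ over $Z$ produces a single arrow $q\colon Q\to Z$ which, being a composite of regular epimorphisms, is an epimorphism. By construction $Q$ carries two arrows $\sigma,\sigma'\colon Q\to X$ satisfying $e\sigma=gq$, $e\sigma'=hq$, and $f\sigma=mgq=mhq=f\sigma'$. Thus $\langle\sigma,\sigma'\rangle$ factors through the kernel pair $R$, and since $e$ coequalises $a,b$ we obtain $e\sigma=e\sigma'$, that is $gq=hq$; as $q$ is epi, $g=h$. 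Finally, the factorisation is stable under pullback because monos pull back to monos and regular epis pull back to regular epis, so any pulled-back factorisation is again of the required form and is \emph{the} factorisation by the uniqueness of (regular epi, mono) factorisations.

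For the converse, assume every arrow admits a pullback-stable (regular epi, mono) factorisation. I would first record the standard fact that a regular epi is extremal: if a regular epi factors through a mono $m$, then $m$ is both split epi and mono, hence iso. Consequently an arrow is a regular epi exactly when the mono part of its factorisation is an isomorphism. Stability of regular epis then follows at once, since pulling back a factorisation $f=me$ with $m$ iso yields a factorisation whose mono part is again iso, hence a regular epi.

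It remains to produce coequalisers of kernel pairs. Given $f$ with kernel pair $(a,b)$, factor $f=m\circ e$ with $m$ mono and $e$ a regular epi. Since $m$ is monic, the defining pullback square of the kernel pair of $e$ is also a kernel pair of $me=f$, so $(a,b)$ is the kernel pair of $e$; and because a regular epimorphism is always the coequaliser of its own kernel pair, $e$ is the coequaliser of $(a,b)$. Hence coequalisers of kernel pairs exist, and $\mC$ is regular. The delicate point, as noted, is the monicity argument: rather than reasoning with elements one must transport the coequalising identity $ea=eb$ along the epimorphic cover $q$ furnished by pullback-stability.
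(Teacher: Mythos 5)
The paper does not actually prove this theorem: it is stated as a well-known characterisation and the proof is delegated entirely to the cited reference, so there is no in-paper argument to compare against. Your proof is the standard textbook argument and, as far as I can check, it is correct and complete. In the forward direction you correctly identify the monicity of $m$ as the crux and handle it the right way: covering a parallel pair $g,h\colon Z\to I$ with $mg=mh$ by the epimorphism $q\colon Q\to Z$ obtained from two pullbacks of $e$ and their fibre product, factoring the resulting pair $\sigma,\sigma'$ through the kernel pair of $f$, and transporting $ea=eb$ along $q$. (Two small points worth making explicit if you write this up: the arrow $q_1\colon Q\to P_1$ is itself a pullback of a regular epi, which is why $q$ is a composite of regular epis and hence epi; and a composite of regular epis need not be a \emph{regular} epi, but you only ever use that it is an epi, so this is fine.) In the converse direction the three ingredients you use -- extremality of regular epis, the fact that an iso composed with a regular epi is a regular epi, and the fact that a regular epi is the coequaliser of its own kernel pair (which exists since $\mC$ has finite limits) -- are all correctly deployed to recover both clauses of Definition~\ref{def:regular category}. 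The appeal to essential uniqueness of (regular epi, mono) factorisations at the end of the forward direction relies on the diagonal fill-in lemma for regular epis against monos; it is standard, but strictly speaking the statement only asks that the pulled-back factorisation again be of the required form, which already follows from pullback-stability of monos and of regular epis, so that remark is not load-bearing.
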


	\begin{mydefinition}\label{def:extensive category}
		A category $\mC$ with finite coproducts is \textbf{extensive} if for every pair of objects $X_1,X_2$ the canonical functor
		\[\mC/X_1\times \mC/X_2\to \mC/(X_1+X_2)\] is an equivalence of categories.
	\end{mydefinition}

\subsection{Lax monoidal functors}
\label{sec:lax_app}
This section recalls the details of the definition of lax monoidal functor, see e.g.~\cite{aguiar2010}.
Throughout, $\mC$ and $\mD$ are symmetric monoidal categories with tensor functor $\otimes$ and monoidal unit $I$, and we assume that 
$\otimes$ strictly associates without loss of generality in order to keep the diagrams simple.
Left and right unitors are denoted by $\lambda$ and $\rho$, respectively\footnote{Strict unitality could be assumed, but it would make some diagrams potentially confusing.}, and braidings by $\gamma$.

\begin{mydefinition}\label{def:lax monoidal functor}
A \textbf{lax monoidal} functor is a triple $(F,\psi_0,\psi)$ where $\freccia{\mC}{F}{\mD}$ is a functor, equipped with a natural transformation 
\[\freccia{\otimes \circ \, (F\times F)}{\psi}{F\circ \otimes}\]
and an arrow $\freccia{I}{\psi_0}{F(I)}$ such that the associativity diagram
\[\begin{tikzcd}[column sep=3ex]
	{F(A)\otimes F(B)\otimes F(C)} &&& {F(A)\otimes F(B\otimes C)} \\
	\\
	{F(A\otimes B)\otimes F(C)} &&& {F(A\otimes B\otimes C)}
	\arrow["{\id\otimes \,\psi_{B,C}}", from=1-1, to=1-4]
	\arrow["{\psi_{A,B}\otimes {}\id}"', from=1-1, to=3-1]
	\arrow["{\psi_{A\otimes B,C}}"', from=3-1, to=3-4]
	\arrow["{\psi_{A,B\otimes C}}", from=1-4, to=3-4]
\end{tikzcd}\]
and the unitality diagrams commute
\begin{equation}
\label{eq:lax_monoidal_unitality}
\begin{tikzcd}[column sep=1ex]
	{I\otimes F(A)} && F(A) && {F(A)\otimes  I} && F(A) \\
	\\
	{F(I)\otimes F(A)} && {F(I\otimes A)} && {F(A)\otimes F(I)} && {F(A\otimes  I).}
	\arrow["{\psi_{I,A}}"', from=3-1, to=3-3]
	\arrow["{F(\lambda_A)}", from=1-3, to=3-3]
	\arrow["{\psi_0\otimes {}\id}"', from=1-1, to=3-1]
	\arrow["{\lambda_{FA}}"', from=1-3, to=1-1]
	\arrow["{\rho_{FA}}"', from=1-7, to=1-5]
	\arrow["{\id\otimes \psi_0}"', from=1-5, to=3-5]
	\arrow["{\psi_{A,I}}"', from=3-5, to=3-7]
	\arrow["{F(\rho_A)}", from=1-7, to=3-7]
\end{tikzcd}
\end{equation}

$F$ is called \textbf{lax symmetric monoidal} if also the following diagram commutes

\begin{center}
\begin{tikzcd}
	{F(A)\otimes F(B)} && {F(B)\otimes F(A)} \\
	{F(A\otimes B)} && {F(B\otimes A)}
	\arrow["{\gamma^{\mathcal{D}}_{FA,FB}}", from=1-1, to=1-3]
	\arrow["{\psi_{A,B}}", from=1-1, to=2-1]
	\arrow["{\psi_ {B,A}}"', from=1-3, to=2-3]
	\arrow["{F(\gamma^{\mathcal{C}}_{A,B})}"', from=2-1, to=2-3]
\end{tikzcd}
\end{center}
\end{mydefinition}

For example, if $\mC$ is the terminal monoidal category with only one object $I$ and $\id_I$ as the only arrow, then $F$ is simply a monoid in $\mD$.
We do not spell out the following dual version in full details.

We also have \textbf{strong symmetric monoidal functors}, which are lax symmetric monoidal functors with invertible structure arrows; and \textbf{strict symmetric monoidal functors}, in which the structure arrows are identities. 

\begin{mydefinition}
\label{laxtras}
	A \textbf{monoidal transformation} between lax monoidal functors $\freccia{\!\!(F,\psi_0,\psi)\!\!}{\epsilon}{(F',\psi'_0,\psi')}:\mC\to\mD$ is a family of arrows $\epsilon_X:F(X)\to F'(X)$, for $X\in\mC$, satisfying
\begin{equation}\label{eq:monoidal transformation}
\begin{tikzcd}[column sep=tiny]
	{F(X)\otimes F(Y)} && {F'(X)\otimes F'(Y)} && I && {F(I)} \\
	{F(X\otimes Y)} && {F'(X\otimes Y)} &&& {F'(I)}
	\arrow["{\epsilon_X\otimes\epsilon_Y}", from=1-1, to=1-3]
	\arrow["\psi"', from=1-1, to=2-1]
	\arrow["{\psi'}", from=1-3, to=2-3]
	\arrow["{\psi_0}", from=1-5, to=1-7]
	\arrow["{\psi'_0}"', from=1-5, to=2-6]
	\arrow["{\epsilon_I}", from=1-7, to=2-6]
	\arrow["{\epsilon_{X\otimes Y}}"', from=2-1, to=2-3]
\end{tikzcd}\end{equation}
When $\epsilon$ is also a natural transformation between the underlying functors $F,F'$ it is called \textbf{monoidal natural transformation}.
\end{mydefinition}
A monoid and comonoid structure on an object in a symmetric monoidal category often interact in a nice way, either such that they form a \emph{bimonoid} or a \emph{Frobenius monoid} (and sometimes both).

\section{More on cogs-monoidal categories}\label{section: appendix dual results}
\begin{mydefinition}\label{def cogs monoidal functor}
	For cogs-monoidal categories $\mC$ and $\mD$, a functor $\freccia{\mC}{F}{\mD}$ equipped with a lax symmetric monoidal structure
   \[
				\freccia{\otimes \circ \, (F\times F)}{\psi}{F\circ \otimes}, \qquad \freccia{I}{\psi_0}{F(I)} 
			\]
is \textbf{coaffine}  if the following diagram commutes for all $X$ in $\mC$
\begin{equation}\label{diagram: lax coaffine}
\begin{tikzcd}[column sep=tiny]
	F(X) && {F(I)} \\
& I
\arrow["{F(\cobang_X)}"', from=1-3, to=1-1]
\arrow["{\cobang_{FX}}", from=2-2, to=1-1]
\arrow["{\psi_0}"', from=2-2, to=1-3]
\end{tikzcd}
\end{equation}
and it is \textbf{corelevant} if the following diagram commutes for all $X$ in $\mC$

\begin{equation}\label{diagram: lax corelevant}
\begin{tikzcd}[column sep=tiny]
	{F(X)} && {F(X\otimes X)} \\
	& {F(X)\otimes F(X)}
	\arrow["{F(\Delta_X)}"', from=1-3, to=1-1]
	\arrow["{\Delta_{FX}}", from=2-2, to=1-1]
	\arrow["{\psi_{X,X}}"', from=2-2, to=1-3]
\end{tikzcd}
\end{equation}
A functor which is both coaffine and corelevant is \textbf{cogs-monoidal}.
\end{mydefinition}

\begin{myproposition}\label{prop: Kleisli of coaffine monads}
    Let $(T,\mu, \eta,c, u)$ be a symmetric monoidal monad on a cogs-monoidal category $\mC$. If $\mC$ has coprojections then so does the Kleisli category $\mC_T$.
	\begin{proof}
If $I$ is initial then it is so also in $\mC_T$ since the functor $\mC\to \mC_T$ preserves colimits.
    \end{proof}
\end{myproposition}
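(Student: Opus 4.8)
The plan is to reduce the statement to the fact that the monoidal unit $I$ is initial in $\mC$, which, dually to Remark~\ref{terminal}, is exactly what having coprojections amounts to in a cogs-monoidal category. By the dual of Proposition~\ref{prop: Kleisli FGTC}, namely Proposition~\ref{prop:Kleisli cogs}, the Kleisli category $\mC_T$ already carries a cogs-monoidal structure, so it remains only to verify that $I$ is initial in $\mC_T$.

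First I would recall the Kleisli adjunction $\mathcal{K}\dashv U$, in which the free functor $\mathcal{K}\colon\mC\to\mC_T$ is the identity on objects and sends an arrow to its post-composition with $\eta$ (see Remark~\ref{rmk: funtore C ---> C_T}). Being a left adjoint, $\mathcal{K}$ preserves all colimits, and in particular it preserves initial objects, since an initial object is the colimit of the empty diagram. As $\mathcal{K}$ is the identity on objects and $I$ is initial in $\mC$ by hypothesis, it follows that $I=\mathcal{K}(I)$ is initial in $\mC_T$.

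Equivalently, and perhaps more transparently, one can argue directly at the level of hom-sets: for every object $Y$ we have $\mC_T(I,Y)=\mC(I,T(Y))$, and since $I$ is initial in $\mC$ this set is a singleton, independently of what the object $T(Y)$ happens to be. Hence $I$ is initial in $\mC_T$.

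The point worth emphasising, and the only real conceptual content of the statement, is that, unlike the affine case treated in Proposition~\ref{prop: Kleisli of affine monads}, no additional hypothesis on the monad (such as coaffineness) is required here. In the affine case one had to compute $\mC_T(X,I)=\mC(X,T(I))$ and therefore needed $T(I)\cong I$; by contrast, the relevant hom-sets here are $\mC(I,T(Y))$, which are automatically singletons whenever $I$ is initial, regardless of the value $T(Y)$. I do not expect any genuine obstacle: the only thing to be careful about is invoking the correct, colimit-preserving direction of the Kleisli adjunction.
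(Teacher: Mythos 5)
Your proof is correct and takes essentially the same route as the paper: the paper's argument is precisely that coprojections amount to $I$ being initial and that the identity-on-objects left adjoint $\mC\to\mC_T$ preserves colimits, hence initial objects. Your additional hom-set computation $\mC_T(I,Y)=\mC(I,T(Y))$ and your observation that no coaffineness hypothesis is needed both match the paper (the latter is stated explicitly in the remark following the proposition).
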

\begin{myremark}
	In the above proposition it is not necessary to assume the monad to be coaffine. Indeed,
	if $I$ is initial then any symmetric monoidal monad is trivially coaffine.
\end{myremark}

\begin{myproposition}\label{prop: Kleisli of corelevant monads}
	Let $(T,\mu, \eta,c, u)$ be a corelevant monad on a cogs-monoidal category $\mC$. If $\mC$ has codiagonals then so does the Kleisli category $\mC_T$.
\begin{proof}
	We have to prove that for every $f:X\to Y$ in $\mC_T$, which corresponds to an arrow $f:X\to T(Y)$ in $\mC$, it holds that $f\circ^\sharp\Delta^\sharp_X= \Delta^\sharp_Y \circ^\sharp (f\otimes_T f)$ in $\mC_T$. 
	Indeed
	\begin{align}
		f\circ^\sharp \Delta^\sharp_X &= f\circ  \Delta_X \tag*{}\\
		&= \Delta_{T(Y)}\circ (f\otimes f) \tag{$\mC$ codiagonals}\\
		&= T(\Delta_Y) \circ c_{Y,Y}\circ (f\otimes f) \tag{$T$ corelevant}\\
		&= \Delta^\sharp_Y\circ^\sharp (f\otimes_T f) \tag*{\qedhere}
	\end{align}
\end{proof}
\end{myproposition}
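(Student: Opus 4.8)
The plan is to derive this statement as the precise dual of Proposition~\ref{prop: Kleisli of relevant monads}, reading the comultiplication $\nabla$ there as the multiplication $\Delta$ here and the relevance of the monad as corelevance. By Proposition~\ref{prop:Kleisli cogs} the Kleisli category $\mC_T$ is already cogs-monoidal, with multiplication $\Delta^\sharp_X=\eta_X\circ\Delta_X$, so ``having codiagonals'' is exactly the assertion that $\Delta^\sharp$ is natural in $\mC_T$. The whole task therefore reduces to checking, for an arbitrary Kleisli arrow $f\colon X\to Y$ (equivalently, an ordinary $\mC$-arrow $f\colon X\to T(Y)$), the naturality equation
\[
f\circ^\sharp\Delta^\sharp_X=\Delta^\sharp_Y\circ^\sharp(f\otimes^\sharp f)
\]
in $\mC_T$.

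First I would unfold the left-hand side via the Kleisli composition rule $g\circ^\sharp h=\mu\circ T(g)\circ h$ and the definition $\Delta^\sharp_X=\eta_X\circ\Delta_X$; naturality of $\eta$ together with the monad unit law $\mu\circ\eta_{T(Y)}=\id$ then collapses this to the plain $\mC$-arrow $f\circ\Delta_X$. The key step is to apply the hypothesis that $\mC$ has codiagonals, that is the naturality of $\Delta$ in $\mC$, to the $\mC$-arrow $f\colon X\to T(Y)$: this rewrites $f\circ\Delta_X$ as $\Delta_{T(Y)}\circ(f\otimes f)$. Corelevance of $T$, which by Definition~\ref{def cogs monoidal functor} (diagram~(\ref{diagram: lax corelevant}) with $F=T$ and $\psi=c$) reads $\Delta_{T(Y)}=T(\Delta_Y)\circ c_{Y,Y}$, turns this into $T(\Delta_Y)\circ c_{Y,Y}\circ(f\otimes f)$.

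To finish I would unfold the right-hand side by the same bookkeeping: the Kleisli tensor of Remark~\ref{rmk: funtore C ---> C_T} gives $f\otimes^\sharp f=c_{Y,Y}\circ(f\otimes f)$, while the Kleisli composition with $\Delta^\sharp_Y=\eta_Y\circ\Delta_Y$ and the unit law $\mu\circ T(\eta_Y)=\id$ strip off the inner $\eta_Y$, leaving precisely $T(\Delta_Y)\circ c_{Y,Y}\circ(f\otimes f)$, which matches the left-hand side. I do not expect a genuine obstacle, since every manipulation is either a monad law, naturality of $\eta$, or one of the two hypotheses, mirroring exactly the ingredients of Proposition~\ref{prop: Kleisli of relevant monads}. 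The only point deserving care is to keep track of the structural map $c_{Y,Y}$: it is produced on the right-hand side by the Kleisli tensor and on the left-hand side by corelevance out of $\Delta_{T(Y)}$, and recognising that these two occurrences coincide is what closes the argument.
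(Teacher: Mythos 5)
Your proposal is correct and follows essentially the same route as the paper's own proof: collapse the left-hand side to $f\circ\Delta_X$, apply naturality of $\Delta$ in $\mC$, apply corelevance of $T$, and recognise the result as the unfolded right-hand side. The only difference is that you make explicit the bookkeeping with $\eta$, $\mu$, and the Kleisli tensor that the paper leaves implicit in its first and last equalities.
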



\begin{mydefinition}\label{def: oplax cocartesian functor}
	Let $\mC$ and $\mD$
	be oplax cocartesian categories and  
	$\freccia{\mC}{F}{\mD}$ a preorder-enriched lax symmetric monoidal functor with structure arrows $\psi, \psi_0$.
	The $F$ is called
		\textbf{colax coaffine} if the following inequality holds 
\[
\begin{tikzcd}
{F(X)} && {F(I)} \\
& I
\arrow[""{name=0, anchor=center, inner sep=0}, "{F(\cobang_X)}"', from=1-3, to=1-1]
\arrow["{\cobang_{F(X)}}", from=2-2, to=1-1]
\arrow["{\psi_0}"', from=2-2, to=1-3]
\arrow["\le"{anchor=center}, draw=none, from=2-2, to=0]
\end{tikzcd}
\]
and it is called \textbf{colax corelevant}  if the following inequality holds
\[
\begin{tikzcd}
{F(X)} && {F(X\otimes X)} \\
& {F(X)\otimes F(X)}
\arrow[""{name=0, anchor=center, inner sep=0}, "{F(\Delta_X)}"', from=1-3, to=1-1]
\arrow["{\Delta_{F(X)}}", from=2-2, to=1-1]
\arrow["{\psi_{X,X}}"', from=2-2, to=1-3]
\arrow["\le"{description}, draw=none, from=2-2, to=0]
\end{tikzcd}\]
If $F$ is both colax affine and colax relevant it is called \textbf{colax cogs-monoidal}.
\end{mydefinition}

\begin{myproposition}\label{prop: Kleisli colax cocartesian}
	Let $(T,\mu, \eta,c, u)$ be a colax cogs-monoidal monad on a preorder-enriched cogs-monoidal category $\mC$. 
	If $\mC$ is oplax cocartesian then so is the Kleisli category $\mC_T$.
\end{myproposition}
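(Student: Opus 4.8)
The statement is the order-enriched dual of Proposition~\ref{prop: Kleisli colax cartesian}, so the plan is simply to replay that proof with every string diagram turned upside down. First I would invoke Proposition~\ref{prop:Kleisli cogs} to record that $\mC_T$ is already cogs-monoidal, with monoid structure $\Delta^\sharp_X = \eta_X \circ \Delta_X$ and $\cobang^\sharp_X = \eta_X \circ \cobang_X$, and I would note that $\mC_T$ inherits the preorder-enrichment from $\mC$ because the canonical functor $\mathcal{K}\colon \mC \to \mC_T$ (identity on objects, post-composition with $\eta$) is preorder-enriched. It then remains to establish the two oplax cocartesian inequalities: for an arrow of $\mC_T$, namely $\freccia{X}{f}{TY}$ in $\mC$, I must show $f \circ^\sharp \cobang^\sharp_X \leq \cobang^\sharp_Y$ and $f \circ^\sharp \Delta^\sharp_X \leq \Delta^\sharp_Y \circ^\sharp (f \otimes^\sharp f)$.

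The first move is to rewrite every Kleisli composite as an honest arrow of $\mC$. Using the naturality of $\eta$ together with the unit laws $\mu \circ T\eta = \id$ and $\mu \circ \eta_T = \id$, the two left-hand sides collapse to $f \circ \cobang_X$ and $f \circ \Delta_X$ respectively. Unwinding the right-hand sides---recalling from Remark~\ref{rmk: funtore C ---> C_T} that $f \otimes^\sharp f = c_{Y,Y} \circ (f \otimes f)$---they become $\eta_Y \circ \cobang_Y$ and $T(\Delta_Y) \circ c_{Y,Y} \circ (f \otimes f)$. Thus the task reduces to verifying $f \circ \cobang_X \leq \eta_Y \circ \cobang_Y$ and $f \circ \Delta_X \leq T(\Delta_Y) \circ c_{Y,Y} \circ (f \otimes f)$ in $\mC$.

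Each of these follows from a short two-step chain combining oplax cocartesianity of $\mC$ with the colax structure of $T$. For the unit inequality I would first apply oplax cocartesianity of $\mC$ to the arrow $\freccia{X}{f}{TY}$, giving $f \circ \cobang_X \leq \cobang_{TY}$, then colax coaffineness of $T$, giving $\cobang_{TY} \leq T(\cobang_Y) \circ \eta_I$, and finally rewrite $T(\cobang_Y) \circ \eta_I = \eta_Y \circ \cobang_Y$ by naturality of $\eta$. For the multiplication inequality I would likewise use oplax cocartesianity of $\mC$ to obtain $f \circ \Delta_X \leq \Delta_{TY} \circ (f \otimes f)$, then colax corelevance of $T$, namely $\Delta_{TY} \leq T(\Delta_Y) \circ c_{Y,Y}$, and monotonicity of composition to land on the required right-hand side.

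The calculations are routine once the bookkeeping is pinned down; there is no deep obstacle, the argument being the exact mirror image of Proposition~\ref{prop: Kleisli colax cartesian}. The only points demanding care are getting the direction of the dual inequalities right (duality flips the $2$-cells, so the oplax cocartesian laws read $f \circ \cobang_X \leq \cobang_Y$ and $f \circ \Delta_X \leq \Delta_Y \circ (f \otimes f)$ for $\freccia{X}{f}{Y}$), and observing that the Kleisli tensor $\otimes^\sharp$ silently carries a copy of the coherence map $c$---which is precisely why the colax corelevant inequality $\Delta_{TY} \leq T(\Delta_Y) \circ c_{Y,Y}$ is the right tool to absorb it.
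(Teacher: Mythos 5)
Your proposal is correct and follows essentially the same route as the paper: establish cogs-monoidality of $\mC_T$ via Proposition~\ref{prop:Kleisli cogs}, reduce the Kleisli composites to arrows of $\mC$ using naturality of $\eta$ and the unit laws, and then chain oplax cocartesianity of $\mC$ with colax coaffineness (resp.\ colax corelevance) of $T$. The only cosmetic difference is that the paper states the unit inequality in the equivalent form $f \le \cobang^\sharp_X$ for arrows $f\colon I \to X$ of $\mC_T$, whereas you phrase it as $f \circ^\sharp \cobang^\sharp_X \le \cobang^\sharp_Y$; these coincide since $\cobang_I = \id_I$.
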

\begin{proof}
	$\mC_T$ is cogs-monoidal thanks to Proposition \ref{prop:Kleisli cogs} and inherits the preorder-enrichment from $\mC$. We now prove that it is oplax cocartesian.

	We first show that for every arrow $f:I\to T(X)$ in $\mC_T$ it holds that $f\le \cobang^\sharp_X$.
	Since $\mC$ is oplax cocartesian we have that $f\le \cobang_{T(X)}$, and because $T$ is colax coaffine it implies
	that $\cobang_{T(X)}\le T(\cobang_X)\circ \eta_I= \cobang^\sharp_X$.

	It remains to prove that for every $f:X\to Y$ in $\mC_T$, which corresponds to an arrow $f:X\to T(Y)$ in $\mC$, it holds that $f\circ^\sharp \Delta^\sharp_X\le \Delta^\sharp_Y \circ^\sharp (f\otimes_T f)$ in $\mC_T$. Indeed
	\begin{align}
		f\circ^\sharp \Delta^\sharp_X &= f \circ\Delta_X \tag*{}\\
		&\le \Delta_{T(Y)}\circ (f\otimes f) \tag{$\mC$ oplax cocartesian}\\
		&\le T(\Delta_Y)\circ c_{Y,Y}\circ (f\otimes f) \tag{$T$ colax corelevant}\\
		&= \Delta^\sharp_Y\circ^\sharp (f\otimes_T f) \tag*{\qedhere}
	\end{align}
	\end{proof}

\end{document}